\newtheorem{Theorem}{Theorem}[section]
\newtheorem{Theoremx}{Theorem}
\newtheorem{Potential Theorem}[Theorem]{Potential Theorem}
\newtheorem{Lemma}[Theorem]{Lemma}
\newtheorem{Corollary}[Theorem]{Corollary}
\newtheorem{Proposition}[Theorem]{Proposition}
\newtheorem{Claim}[Theorem]{Claim}
\newtheorem*{Claim*}{Claim}
\theoremstyle{definition}
\newtheorem{Example}[Theorem]{Example}
\newtheorem{Definition}[Theorem]{Definition}
\newtheorem{Question}[Theorem]{Question}
\theoremstyle{remark}
\newtheorem{Remark}[Theorem]{Remark}
\DeclareMathOperator{\height}{ht}
\DeclareMathOperator{\id}{id}
\DeclareMathOperator{\Proj}{Proj}
\DeclareMathOperator{\Hom}{Hom}
\DeclareMathOperator{\Spec}{Spec}
\DeclareMathOperator{\Ass}{Ass}
\DeclareMathOperator{\Ext}{Ext}
\DeclareMathOperator{\coker}{coker}
\DeclareMathOperator{\rsig}{s_{rat}}
\DeclareMathOperator{\csig}{s_{rel}}
\DeclareMathOperator{\ehk}{e_{HK}}
\DeclareMathOperator{\length}{\lambda}
\DeclareMathOperator{\s}{s}
\DeclareMathOperator{\sdim}{sdim}
\newcommand{\Tr}{\operatorname{Tr}}
\newcommand{\ds}{\displaystyle}
\def\p{\mathfrak{p}}
\def\m{\mathfrak{m}}
\def\n{\mathfrak{n}}
\def\a{\mathfrak{a}}
\def\Z{\mathbb{Z}}
\def\R{\mathbb{R}}
\def\Q{\mathbb{Q}}
\def\F{\mathbb{F}}
\def\N{\mathbb{N}}
\def\CC{\mathbb{C}}
\renewcommand{\geq}{\geqslant}
\renewcommand{\leq}{\leqslant}
\newcommand{\ck}[1]{{#1}^{\vee}}
\DeclareMathOperator{\ann}{ann}
\newcommand{\ps}[1]{\llbracket {#1} \rrbracket}
\newcommand{\Att}{\operatorname{Att}}
\newcommand{\soc}{\operatorname{soc}}
\newcommand{\pow}[2]{(#1)^{\oplus #2}}
\newcommand{\ul}{\underline}
\newcommand{\ov}{\overline}
\DeclareMathOperator{\cP}{\mathcal{P}}
\begin{document}

\title{Stability and deformation of F-singularites}
\author{Alessandro De Stefani}
\address{Dipartimento di Matematica, Universit{\`a} di Genova, Via Dodecaneso 35, 16146 Genova, Italy}
\email{destefani@dima.unige.it}

\author{Ilya Smirnov}
\address{Department of Mathematics, Stockholm University, S-10691, Stockholm, Sweden}
\email{smirnov@math.su.se}

\subjclass[2010]{13A35, 13H10, 14B05, Secondary: 13D45,  14B07}

\begin{abstract}
We study the problem of $\m$-adic stability of F-singularities, that is, whether the property that a quotient of a local ring $(R,\m)$ by a non-zero divisor $x \in \m$ has good F-singularities is preserved in a sufficiently small $\m$-adic neighborhood of $x$. We show that $\m$-adic stability holds for F-rationality in full generality, and for F-injectivity, F-purity and strong F-regularity under certain assumptions. We show that strong F-regularity and F-purity are not stable in general. Moreover, we exhibit strong connections between stability and deformation phenomena, which hold in great generality.
\end{abstract}

\maketitle

\section{Introduction}

When dealing with solutions of a system of equations, one often desires to have solutions of a nice class, such as algebraic or, even, polynomial functions. Thus, very important are results that allow us to approximate an existing solution with solutions in a desired class.
For example, the celebrated Artin's approximation allows us to approximate a formal power series solution with convergent power series: 
given a system of polynomial equations in two sets of variables $\underline{x}$ and $\underline{y}$ and a formal solution $\hat{\underline{y}}$,
then for any $c > 0$ there will be an algebraic power series solution $\underline{y}(\underline{x})$ 
such that $\underline{y}(\underline{x}) \equiv \hat{\underline{y}}(\underline{x}) \mod (\underline{x})^c$. 

In a related way, Samuel's problem of \emph{finite determinacy} asks to truncate the formal powers series $\hat{\underline{y}}$ to obtain 
an equivalent algebraic singularity. In \cite{Samuel} Samuel proved this for a hypersurface isolated singularity in a much stronger form by showing that for $c$ large enough any approximation in the above sense defines an equivalent singularity. 
Unfortunately, if the singularity is not isolated, this is no longer true (\cite{GreuelPham}),
and it is natural to investigate what properties can be preserved.

Specifically, we say that a ring property $\cP$ is \emph{stable under fine perturbations}
(or \emph{$\m$-adically stable}) if, whenever a local ring $(R,\m)$ and a non-zero divisor $x \in \m$ are such that $R/(x)$ satisfies $\cP$, then $R/(y)$ satisfies $\cP$ for all $y$ in a sufficiently small $\m$-adic neighborhood $x+\m^N$ of $x$.
While $\m$-adic stability is clearly a stronger property than finite determinacy, 
it is also easier to work with in practice. In fact, we are not aware of another common way to establish finite determinacy.

In this article, we study the behavior of F-singularities in this context, a direction that originates from \cite{PolstraSmirnov, MaPhamSmirnov}. F-singularity is an umbrella term for the classes of singularities defined by the properties of the Frobenius endomorphism. They originate from the theory of tight closure (\cite{HochsterHuneke}) and homological conjectures in positive characteristic (\cite{HochsterRoberts}),  but are also connected with intrinsic classes of singularities in birational geometry and, after recent work, with singularities in mixed characteristic. 
The four most fundamental classes of F-singularities are strongly F-regular, F-pure (Definition~\ref{def F-sing}), F-rational (Definition~\ref{def F-rat}), and F-injective (Definition~\ref{def F-inj}) singularities, related in the following way:
\[
\xymatrixcolsep{1.5cm}
\xymatrix@1{
\text{strong F-regular} \ar@{=>}[dd] \ar@{=>}[r] & \text{F-rational} \ar@{=>}[dd] \\ \\
\text{F-pure} \ar@{=>}[r] & \text{ F-injective}
}
\]

In Section \ref{Section perturbation F-rat} we prove stability of F-rationality. In fact, we prove more: Hochster and Yao defined in \cite{HochsterYao}  a numerical invariant, called the F-rational signature $\rsig(R)$, which has the property that $\rsig(R)>0$ if and only if $R$ is F-rational (see Definition \ref{Definition F-rational signature} and the subsequent discussion). 
Our main result in Section \ref{Section perturbation F-rat} is the following:
\begin{Theoremx}[see Theorem \ref{continuity of rational signature} and Corollary \ref{F-rational is ok}] \label{THMX F-rational}
Let $(R, \mathfrak m)$ be an F-finite local ring of prime characteristic $p>0$. If $R$ is Cohen-Macaulay, and $x \in \m$ is a non-zero divisor such that $\widehat{R}/(x)\widehat{R}$ is reduced, then for any $\varepsilon > 0$ there exists an integer $N > 0$ such that
\[
|\rsig (R/(x))- \rsig (R/(x+\delta)) | < \varepsilon
\]
for any $\delta \in \m^N$. In particular, if $R/(x)$ is F-rational, then $R/(x+\delta)$ is F-rational.
\end{Theoremx}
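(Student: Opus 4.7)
My strategy is to compare the F-rational signatures of $S := R/(x)$ and $S_\delta := R/(x+\delta)$ by working inside the common Cohen-Macaulay ambient ring $R$ and using a Hochster--Yao colength description of $\rsig$. The ambient hypotheses transfer: $x+\delta$ is a non-zero divisor for every $\delta \in \m$, so $S_\delta$ is again a $d$-dimensional Cohen-Macaulay local ring with $d = \dim R - 1$, and reducedness of $\widehat{S_\delta}$ for $\delta$ sufficiently deep in $\m$ should follow from a companion deformation-of-reducedness statement elsewhere in the paper.

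The central computation is the following. For a parameter ideal $J \subset S$, a test element $c$, and a Frobenius exponent $e$, choose lifts $\widetilde{J} \subset R$ and $\widetilde{c} \in R$. Then $\lambda_S(S/(J^{[p^e]} : c)) = \lambda_R(R/((\widetilde{J}^{[p^e]} + (x)) : \widetilde{c}))$, and analogously for $S_\delta$. Since $\widetilde{J}^{[p^e]} + (x)$ is $\m$-primary, there exists $N = N(e)$ with $\m^N \subset \widetilde{J}^{[p^e]} + (x)$, and for $\delta \in \m^N$ the ideal equality $\widetilde{J}^{[p^e]} + (x) = \widetilde{J}^{[p^e]} + (x+\delta)$ holds in $R$, so the two colengths coincide at fixed data $(J, c, e)$.

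The main obstacle is that the bound $N(e)$ grows with $e$, while the theorem demands a single $N = N(\varepsilon)$ simultaneously valid at every Frobenius level. I would attack this through the normalization: only a difference of order $\varepsilon \cdot p^{ed}$ in the raw colengths is needed. A uniform Artin--Rees type estimate on the tail $\m^N \cap ((\widetilde{J}^{[p^e]} + (x)) : \widetilde{c})$ relative to $p^{ed}$, combined with an effective rate of convergence for the Hochster--Yao limit (both leveraging the Cohen-Macaulay and reducedness hypotheses, probably via a lower-order bound on the colength defect in terms of mixed multiplicities), should yield uniform $\varepsilon$-closeness of the normalized colengths; passing to the infimum over $(J, c)$ then gives the sought $\m$-adic continuity of $\rsig$. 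Once this is in hand, the F-rationality statement is immediate: if $\rsig(S) > 0$, then choosing $\varepsilon = \rsig(S)/2$ yields $\rsig(S_\delta) > 0$, hence $S_\delta$ is F-rational, for every $\delta$ in a sufficiently small $\m$-adic neighborhood of $x$.
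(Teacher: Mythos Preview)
Your outline correctly isolates the core difficulty---making the bound $N$ independent of the Frobenius level $e$---and correctly deduces F-rationality from positivity of $\rsig$. However, there are two genuine gaps.

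First, the colength formula you invoke is not the Hochster--Yao description of $\rsig$. The F-rational signature is
\[
\rsig(S) \;=\; \inf_{u}\bigl(\ehk(\ul{y}) - \ehk((\ul{y},u))\bigr),
\]
where $\ul{y}$ is a \emph{fixed} system of parameters of $S$ and $u$ ranges over socle elements of $S/(\ul{y})$; no test element $c$ enters. The relevant length at level $e$ is $\length\bigl(S/(\ul{y},u)^{[p^e]}\bigr)$, not $\length\bigl(S/(J^{[p^e]}:c)\bigr)$. Taking an infimum over all parameter ideals $J$ is unnecessary and would in fact be fatal, since the uniformity constant below depends on the choice of $\ul{y}$.

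Second, and more seriously, your proposed resolution of the uniformity problem (``uniform Artin--Rees'' plus ``effective rate of convergence'') is precisely the hard part, and the sketch gives no mechanism. The paper does not attempt a direct Artin--Rees argument; instead it imports a uniform convergence theorem for Hilbert--Kunz multiplicity (extending results of Polstra--Smirnov): there exist $C \geq 0$ and $N$ such that for every $\delta \in \m^N$, every $e \geq 1$, and every ideal $I \supseteq \m^{[p^{e_0}]}$,
\[
\left| \ehk\bigl(I, R/(x+\delta)\bigr) - p^{-ed}\,\length\bigl(R/(x+\delta, I^{[p^e]})\bigr)\right| \leq Cp^{-e}.
\]
Because $\ul{y}$ is fixed, every ideal $I$ with $(\ul{y}) \subseteq I$ and $\m I \subseteq (\ul{y})$ contains a single $\m^{[p^{e_0}]}$, so the estimate applies simultaneously to all of them. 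One then chooses $e$ with $Cp^{-e} < \varepsilon/2$ and enlarges $N$ so that $\m^N \subseteq (\m^{[p^{e e_0}]}, x)$; this freezes the finite-level lengths and yields $|\ehk(I,S) - \ehk(I,S_\delta)| < \varepsilon$ uniformly in $I$. Taking the infimum over socle elements $u$ finishes. Your plan would need to reproduce this uniform-convergence input, and the Artin--Rees heuristic you describe does not obviously supply it.
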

For Gorenstein rings the F-rational signature coincides with the F-signature. Thus Theorem \ref{THMX F-rational} extends and gives a different perspective on \cite[Theorem~3.12]{PolstraSmirnov}. 

One can see that stability is a natural $\m$-adic analogue of \emph{deformation}. 
Let $X$ be an algebraic variety over a  field $K$ and consider a one-parameter deformation of $X$, that is,
a flat morphism $f\colon Y \to \Spec(K[t])$ such that $X \cong Y_0 := f^{-1}(0)$. 
We say that a property $\cP$ deforms if, whenever $X$ satisfies $\cP$, then 
$Y_r = f^{-1}(r)$ satisfies $\cP$ for $r$ in a neighborhood of the special fiber $Y_0$.
For $K = \CC$ this just means that $r$ is taken sufficient close to $0$, see \cite[Chapter 9]{Ishii} for results in this direction. 

A common approach for establishing deformation is by proving first that the total space $Y$ necessarily satisfies $\cP$ in a neighborhood of $0$
and then descending $\cP$ to $Y_r$. In the local case, the former requires that if  $R/(x)$ satisfying $\cP$ for some non-zero divisor $x$ then $R$ satisfies $\cP$. We will follow recent literature and, from now on, will say in such case that a property of local rings $\cP$ \emph{deforms}. 

The study of deformation of F-singularities by Fedder in 1987: motivated by an analogous result of Elkik for rational singularities \cite{Elkik}, Fedder (\cite{Fedder}) proved that F-purity does not deform in general, but it does when $R$ is Gorenstein. He also showed that F-injectivity deforms when $R$ is Cohen-Macaulay. Hochster and Huneke proved that F-rationality deforms \cite{HochsterHunekeSmoothBaseChange}. F-regularity deforms in some special cases \cite{AKM, SinghFPURFREG}, but, in general, Singh provided a counterexample  \cite{SinghNODeformationFREG}. There have been several recent results in the direction of proving deformation of F-injectivity \cite{HMS,MaSchwedeShimomoto,MaQuy}, but this remains an open problem.

Our next main result is a general relation between deformation and stability. 
\begin{Theoremx}[see Theorem \ref{THM P->D}] \label{THM A}
Let $\cP$ be a property of local rings that satisfies the following two conditions:
\begin{itemize}
\item[a)] If $(R,\m)$ satisfies $\cP$ and $T$ is a variable, then $R[T]_{(\m,T)}$ satisfies $\cP$.
\item[b)] If $(R,\m) \to (S,\n)$ is faithfully flat, and $S$ satisfies $\cP$, then $R$ satisfies $\cP$.
\end{itemize}
If $\m$-adic stability of $\cP$ holds, then deformation of $\cP$ holds.
\end{Theoremx}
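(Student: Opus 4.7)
The plan is to show that, for $R$ local and $x \in \m$ a non-zero divisor with $R/(x)$ satisfying $\cP$, the ring $R$ itself satisfies $\cP$, by constructing a faithfully flat local extension $R \to S'$ with $S'$ satisfying $\cP$ and descending via hypothesis (b). The natural candidate is built from $S := R[T]_{(\m,T)}$, which is a local ring faithfully flat over $R$, in which $x$ remains a non-zero divisor. By hypothesis (a) applied to the local ring $R/(x)$, the ring
\[
(R/(x))[T]_{(\m/(x),T)} \cong R[T]_{(\m,T)}/(x) = S/(x)
\]
satisfies $\cP$.

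Now $\m$-adic stability of $\cP$ applied to the pair $(S, x)$ yields an integer $N > 0$ such that $S/(x+\delta)$ satisfies $\cP$ for every $\delta \in \m_S^N$, where $\m_S = (\m, T)S$. The key choice is to take $\delta := T^N$, which lies in $\m_S^N$ no matter how large $N$ has to be. Setting $S' := S/(x + T^N)$, we conclude that $S'$ satisfies $\cP$. To apply (b), we must verify that $R \to S'$ is faithfully flat. Modulo $\m$, the element $x + T^N$ reduces to $T^N$ in $S/\m S \cong k[T]_{(T)}$ (where $k = R/\m$), which is a non-zero divisor; by the local criterion of flatness, $S'$ is $R$-flat. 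Since $S'$ is local and the composition $R \to S \to S'$ is a local homomorphism, the map $R \to S'$ is faithfully flat, and hypothesis (b) implies that $R$ satisfies $\cP$, as desired.

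The conceptual work is minimal once the right perturbation is identified: the role of $T^N$ is to simultaneously (i) sit in an arbitrarily deep power of the maximal ideal of $S$, so that stability delivers the conclusion no matter how large $N$ must be taken, and (ii) survive modulo $\m$ as a non-zero divisor in the DVR $k[T]_{(T)}$, guaranteeing flatness of the quotient. I expect no serious obstacle: the two hypotheses on $\cP$ are tailored exactly to the two steps (going up to $S/(x)$, and descending from $S'$ to $R$), and $\m$-adic stability bridges the gap between $S/(x)$ and $S/(x + T^N)$.
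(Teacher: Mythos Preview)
Your proof is correct and follows essentially the same approach as the paper: both pass to $S = R[T]_{(\m,T)}$, use condition (a) to get that $S/(x)$ satisfies $\cP$, perturb $x$ by $\pm T^N$, and then descend to $R$ via the faithfully flat map $R \to S/(x \pm T^N)$ using condition (b). The only cosmetic differences are that the paper argues by contrapositive and verifies faithful flatness by noting that $R[T]_{(\m,T)}$ is free over the subring $R[T^N]_{(\m,T^N)}$ (so the quotient is free over $R[T^N]_{(\m,T^N)}/(x-T^N) \cong R$), whereas you argue directly and invoke the local criterion of flatness.
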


The aforementioned four classes of F-singularities 
satisfy the two conditions of Theorem \ref{THM A} (see \cite{Hashimoto, Velez, HochsterRoberts, DattaMurayama}), thus stability of such F-singularities implies deformation. 

In Section \ref{Section relation F-injective and perturbation}, we consider the problem of $\m$-adic stability of F-injectivity. While we are not able to prove this in full generality, we show that F-injectivity is stable in the most relevant cases where deformation is known to hold.
\begin{Theoremx}[see Theorems \ref{PerturbationFInjective SurjectiveElement} and \ref{Theorem F-inj strictly filter}] Let $(R,\m)$ be a local ring of prime characteristic $p>0$, and $x \in \m$ be a non-zero divisor such that $R/(x)$ is F-injective.
\begin{enumerate}
\item If $x$ is a surjective element, then there exists an integer $N >0$ such that $R/(x+\delta)$ is F-injective for all $\delta \in \m^N$.
\item If $R/\m$ is perfect and $x$ is a strictly filter regular element, then there exists an integer $N >0$ such that $R/(x+\delta)$ is F-injective for all $\delta \in \m^N$.
\end{enumerate}
\end{Theoremx}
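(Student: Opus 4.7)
The strategy in both parts is to analyze the Frobenius action on the long exact sequence in local cohomology induced by $0 \to R \xrightarrow{x+\delta} R \to R/(x+\delta) \to 0$, and to transfer F-injectivity from $R/(x)$ to $R/(x+\delta)$ by identifying, or closely comparing, the relevant Frobenius modules for $\delta$ in a small $\m$-adic neighborhood of zero.

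For part (1), since $x$ is a surjective element, multiplication by $x$ is surjective on $H^i_\m(R)$ for all $i<\dim R$, and the long exact sequence collapses to give isomorphisms $H^{i-1}_\m(R/(x)) \cong (0 :_{H^i_\m(R)} x)$ compatible with the induced Frobenius structures. The plan has two main steps. First, I would show that surjectivity of $x$ on the Artinian modules $H^i_\m(R)$ is $\m$-adically stable: via Matlis duality this corresponds to injectivity of a map on a finitely generated $\widehat{R}$-module, which persists under perturbation by $\m^N$ for $N$ large by an Artin--Rees/Nakayama argument. Second, one needs to compare the submodules $(0 :_{H^i_\m(R)} x)$ and $(0 :_{H^i_\m(R)} (x+\delta))$, or more precisely the Frobenius modules they inherit, and argue that F-injectivity of the first forces F-injectivity of the second. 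The Matlis dual picture is useful again: the cokernels of $x$ and of $x+\delta$ on the finitely generated dual module coincide provided $\delta$ annihilates the cokernel, which happens once $\m^N$ does, giving the needed identification.

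For part (2), the strictly filter regular hypothesis only guarantees that the kernels $(0 :_{H^i_\m(R)} x)$ are of finite length for $i < \dim R$, so the long exact sequence yields a Frobenius-compatible short exact sequence
\[
0 \to H^{i-1}_\m(R)\big/(x+\delta)H^{i-1}_\m(R) \to H^{i-1}_\m(R/(x+\delta)) \to (0 :_{H^i_\m(R)} (x+\delta)) \to 0.
\]
The perfect residue field hypothesis makes Frobenius on the finite-length kernel amenable to semilinear algebra: a Fitting-type decomposition splits Frobenius into a nilpotent and a bijective part, and F-injectivity forces the nilpotent part to vanish. I would then argue that, for $\delta$ in a sufficiently small $\m$-adic neighborhood, the Fitting decompositions for $x$ and $x+\delta$ match, and separately establish Frobenius injectivity on the quotient term $H^{i-1}_\m(R)/(x+\delta)H^{i-1}_\m(R)$ by a cokernel analysis analogous to part (1) but now relativized to a finite-length correction.

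The main obstacle in both parts is the semilinearity of Frobenius: since $F$ is not $R$-linear, standard module-theoretic continuity arguments (Nakayama, Artin--Rees) do not apply directly to statements about Frobenius injectivity. Matlis duality translates Frobenius-theoretic data into Cartier-type $R$-linear data on finitely generated modules, where continuity is available, but pulling the resulting information back to F-injectivity of $R/(x+\delta)$ is delicate. In part (2) the perfect residue field assumption is exactly what allows this translation to go through on the finite-length pieces produced by the strictly filter regular hypothesis, and verifying that the Fitting decomposition deforms correctly is where I expect the bulk of the technical work to lie.
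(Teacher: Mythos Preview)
Your overall architecture---analyze the long exact sequence, show the relevant ``surjective element'' or ``strictly filter regular'' condition is stable, then transfer F-injectivity---matches the paper's. But the heart of the argument is missing, and your proposed substitute does not work as stated.

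The decisive step in the paper is a short elementary lemma (Lemma~\ref{LemmaInjective}): if $W$ is Artinian with a Frobenius action $F$ and $cF\colon W\to W$ is injective, then $(c+\delta)F$ is injective for every $\delta\in\m^{[p]}$. The proof is a two-line socle computation: write $\delta=\sum r_ix_i^p$ and note $\delta F(\eta)=\sum r_iF(x_i\eta)=0$ for any socle element $\eta$. This lemma handles the semilinearity obstacle head-on. The paper then observes that under the connecting isomorphism $H^{i-1}_\m(R/(x))\cong (0:_{H^i_\m(R)}x)$, the Frobenius on the left corresponds to the \emph{twisted} map $x^{p-1}F$ on the right; injectivity of $x^{p-1}F$ follows from F-injectivity of $R/(x)$, and Lemma~\ref{LemmaInjective} with $c=x^{p-1}$ gives injectivity of $(x+\delta)^{p-1}F$, which is exactly what is needed for $R/(x+\delta)$.

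Your plan never identifies this twist by $x^{p-1}$, and your proposed workaround---pass to the Matlis dual where ``Cartier-type $R$-linear data'' lives---is flawed: Cartier operators are $p^{-1}$-linear, not $R$-linear, so the continuity arguments you invoke do not apply to them any more than to Frobenius actions. Matching the modules $(0:x)$ and $(0:(x+\delta))$, or their Matlis duals, is not enough, because the Frobenius structures they carry come from different rings $R/(x)$ and $R/(x+\delta)$ and differ precisely by the twist $x^{p-1}$ versus $(x+\delta)^{p-1}$.

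For part (2) your Fitting-decomposition idea is close in spirit---the paper does use that an injective Frobenius on a finite-length module over a perfect residue field is bijective---but you have the filtration slightly off (it is the cokernels $L_i=H^i_\m(R)/xH^i_\m(R)$ that are finite length under the strictly filter regular hypothesis, not the kernels), and the ``matching of Fitting decompositions'' is accomplished in the paper by the concrete fact $xH^i_\m(R)=(x+\delta)H^i_\m(R)$ for $\delta$ deep enough (Lemma~\ref{Lemma invariance secondary rep for deep perturbations}), proved via secondary representations. Injectivity on the remaining piece again reduces to Lemma~\ref{LemmaInjective}.
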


Because F-purity and strong F-regularity do not deform \cite{Fedder, SinghNODeformationFREG}, we obtain right away that stability does not hold for such F-singularities. We do provide an explicit example (see Theorem \ref{Counterexample FReg}), based on the construction of Singh \cite{SinghNODeformationFREG}. 

Singh's example also shows that strong F-regularity is not open in families (Example~\ref{Example Family})
and we push this further to show that it may not be open even in a proper flat family (Example~\ref{Proper Family}).
This shows that strong F-regularity has a very different geometric behavior from 
F-rationality, which is known to be open \cite{Hashimoto01}, \cite{PSZ}
even in mixed characteristic \cite[Theorem~7.2]{MaSchwede}.

It is known that deformation of strong F-regularity holds under some additional assumptions; for instance, when $(R,\m)$ is $\Q$-Gorenstein (\cite{AKM} and \cite{MacCrimmonThesis}, or \cite{AberbachWeakAndStrong}). We obtain an analogous statement for stability.
\begin{Theoremx}[see Theorem \ref{Theorem Q-Gorenstein}] \label{THMX Q-Gorenstein}
Let $(R, \m)$ be an F-finite local ring of characteristic $p > 0$ that is $\Q$-Gorenstein on the punctured spectrum of $R$. If $x$ is a non-zero divisor such that $R/(x)$ is strongly F-regular, then for any $\varepsilon > 0$ there exists an integer $N > 0$
such that
\[
\left| \s(R/(x)) - \s(R/(x+\delta)) \right| < \varepsilon
\]
for all $\delta \in \m^N$. In particular, $R/(x+\delta)$ is strongly F-regular.
\end{Theoremx}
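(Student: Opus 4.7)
The plan is to combine the F-rational signature continuity from Theorem~\ref{THMX F-rational} with the additional rigidity afforded by the $\Q$-Gorenstein hypothesis on the punctured spectrum. In particular, I will (i) use Theorem~\ref{THMX F-rational} to guarantee that $R/(x+\delta)$ is F-rational for $\delta$ in a small $\m$-adic neighborhood of $0$; (ii) use the $\Q$-Gorenstein structure to relate $\s$ to $\rsig$ uniformly in $\delta$; and (iii) combine these to extract both strong F-regularity of $R/(x+\delta)$ and the desired continuity estimate for $\s$.

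For (i), observe first that $R$ is Cohen-Macaulay, since $R/(x)$ is Cohen-Macaulay (as it is strongly F-regular) and $x$ is a non-zero divisor. Moreover $R/(x)$ is reduced, and F-finiteness (hence excellence) of $R$ ensures that $\widehat{R}/(x)\widehat{R}$ is reduced as well. Theorem~\ref{THMX F-rational} then applies: for every $\eta>0$ there exists an integer $N_1$ such that
\[
|\rsig(R/(x))-\rsig(R/(x+\delta))|<\eta \quad\text{for all }\delta\in\m^{N_1},
\]
and in particular $R/(x+\delta)$ is F-rational.

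For (ii), the key structural input is that if $R$ is $\Q$-Gorenstein of some index $m$ on its punctured spectrum, then for $\delta$ sufficiently small $R/(x+\delta)$ inherits the same structure (with index dividing $m$), since the canonical module of $R/(x+\delta)$ is the reduction $\omega_R/(x+\delta)\omega_R$ and its reflexive powers are computed from $\omega_R^{[m]}$, which is free on the punctured spectrum of $R$. In the $\Q$-Gorenstein setting, a cyclic cover along the anticanonical algebra reduces the F-signature computation to the Gorenstein case, where $\s=\rsig$. This yields a comparison of the form $|\s(R/(x+\delta))-\s(R/(x))|\le C\cdot|\rsig(R/(x+\delta))-\rsig(R/(x))|$ with constant $C$ depending only on $m$ (in particular, independent of $\delta$). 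Taking $\eta=\varepsilon/C$ in Step~(i) then provides the F-signature estimate. Strong F-regularity of $R/(x+\delta)$ follows either directly from positivity of the F-signature, or by combining F-rationality with a Frobenius splitting lifted from $R/(x)$ via the standard criterion for the $\Q$-Gorenstein case (Aberbach--MacCrimmon).

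The main difficulty lies in making the reduction to the Gorenstein case in (ii) genuinely uniform in $\delta$. Concretely, one must show that the cyclic cover of $R/(x+\delta)$ along its anticanonical algebra and the attendant identifications of F-signatures can be carried out coherently as $\delta$ varies in $\m^N$. This is precisely where the hypothesis that $R$ itself (and not merely $R/(x)$) is $\Q$-Gorenstein on the punctured spectrum becomes essential: the cyclic cover can be constructed at the level of $R$ and then specialized along the one-parameter family $x+\delta$, so that the $\Q$-Gorenstein index and the generators of $\omega_R^{[m]}$ supply $\m$-adically stable data that descend to every $R/(x+\delta)$.
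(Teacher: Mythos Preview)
Your plan has a genuine gap in step~(ii). The cyclic cover reduction you invoke requires a \emph{global} $\Q$-Gorenstein structure: to build the anticanonical cover $\bigoplus_{i=0}^{m-1}\omega_R^{[-i]}$ one needs $\omega_R^{[-m]}$ to be free, so that a choice of generator supplies the ring multiplication and makes the cover Gorenstein. Under the stated hypothesis $R$ is only $\Q$-Gorenstein on the punctured spectrum, so no such cover exists at the level of $R$, and the reduction to the Gorenstein case (where $\s=\rsig$) is unavailable. Your final paragraph asserts that the cover ``can be constructed at the level of $R$,'' but this is precisely what fails.

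Even setting that aside, the inequality
\[
\bigl|\s(R/(x+\delta))-\s(R/(x))\bigr|\le C\cdot\bigl|\rsig(R/(x+\delta))-\rsig(R/(x))\bigr|
\]
is not something a cyclic cover argument delivers: the cover relates $\s$ of the base to $\s$ (equivalently $\rsig$) of the \emph{cover}, not to $\rsig$ of the base, and there is no known uniform comparison between $\rsig$ of the base and of the cover. So the Lipschitz-type bound you need remains unjustified.

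The paper takes a different route and does not compare $\s$ with $\rsig$ at all. Instead, following Polstra--Tucker, it constructs a single system of parameters $\ul{x},\ul{y}$ in $R$ (with $y_1\in J$ a canonical ideal element and $y_2,\ldots,y_d$ chosen so that suitable powers push $J^{(n)}$ into principal ideals on the punctured spectrum) together with a socle element $u$, such that for every $\delta$ the F-signature of $R/(x+\delta)$ is realized as a difference of two Hilbert--Kunz multiplicities of fixed ideals:
\[
\s(R/(x+\delta))=\ehk(I;R/(x+\delta))-\ehk((I,z);R/(x+\delta)).
\]
The $\Q$-Gorenstein-on-the-punctured-spectrum hypothesis is used only to produce these $y_i$ and to run the colon-ideal manipulations of \cite[Lemma~6.7]{PolstraTucker} and Corollary~\ref{Cor PT}. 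Continuity of $\s$ then follows directly from the uniform continuity of $\ehk$ (Corollary~\ref{ehk uniformly close}) applied to each term. No cover and no comparison with $\rsig$ is needed.
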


We also obtain some positive results for stability of F-purity under some additional assumptions (see subsection \ref{Section perturbation F-pure}). More generally, we prove that sharp F-purity, introduced by Schwede in \cite{SchwedeSharpFPurity}, is $\m$-adically stable (see Theorem \ref{Perturbation FPurity Compatible} and Proposition \ref{PropositionSharpFPurity}).

Our results show a striking difference between F-signature and Hilbert--Kunz multiplicity. The
Hilbert--Kunz multiplicity is open in families (see \cite{SmirnovFamily}), and 
is continuous in the $\m$-adic topology by \cite[Corollary 3.7]{PolstraSmirnov}. 
Theorem \ref{Counterexample FReg} is a counter-example to \cite[Corollary~3.10]{PolstraSmirnov}
and \cite[Theorem~3.12]{PolstraSmirnov}, which by Theorem \ref{THMX Q-Gorenstein} are only known to be true for rings which are $\Q$-Gorenstein on the punctured spectrum\footnote{A valid proof of \cite[Theorem~3.12]{PolstraSmirnov} for Gorenstein rings will appear as an erratum, and is currently available on the arXiv.}.


\subsection*{Acknowledgments} The authors would like to thank Linquan Ma, Pham Hung Quy, Karl Schwede, and Anurag Singh for useful discussions and for sharing insights and ideas on the topics of this paper, Thomas Polstra for pointing out an inaccuracy in an earlier version, and Austyn Simpson for comments.  

\section{General relations between stability and deformation}
Throughout, $(R,\m)$ will denote a Noetherian local ring.

\begin{Definition} Let $\cP$ be a given property of a local ring. We say that {\it deformation of $\cP$} holds if, whenever $(R,\m)$ is a local ring and $x \in \m$ is a non-zero divisor such that $R/(x)$ satisfies $\cP$, then $R$ satisfies $\cP$.
\end{Definition}

We now formally introduce the notion of $\m$-adic stability for a property of a local ring.
\begin{Definition}
Let $\cP$ be a given property of a local ring. 
We say that {\it $\m$-adic stability of $\cP$} holds if, whenever $(R,\m)$ is a local ring and $x \in \m$ is a non-zero divisor such that $R/(x)$ satisfies $\cP$, there exists an integer $N$ such that $R/(x+\delta)$ satisfies $\cP$ whenever $\delta \in \m^N$.
\end{Definition}

\begin{Remark}
Both definitions of deformation and $\m$-adic stability naturally generalize to regular sequences. We point out that, while deformation for regular elements is clearly equivalent to deformation for regular sequences, a priori this is not the case for stability.
\end{Remark}

The introduction of $\m$-adic stability is motivated by recent results of Ma, Pham and Smirnov \cite{MaPhamSmirnov}, who prove that small perturbation of quotients by filter regular sequences have isomorphic associated graded rings. The study of $\m$-adic continuity of two important invariants in positive characteristic, the Hilbert--Kunz multiplicity and the F-signature, was also initiated in \cite{PolstraSmirnov}, and will be further developed in the next sections of this article.

Our main goal is to understand how F-singularities behave under small perturbations, and to relate $\m$-adic stability to deformation problems. We can find a surprising connection between these two notions, that holds in great generality.

\begin{Theorem} \label{THM P->D}
Let $\cP$ be a property of local rings such that
\begin{enumerate}
\item If a local ring $R$ satisfies $\cP$, and $T$ is a variable, then $R[T]_{(\m,T)}$ satisfies $\cP$.
\item If $(R,\m) \to (S,\n)$ is faithfully flat, and $S$ satisfies $\cP$, then so does $R$.
\end{enumerate} 
If $\m$-adic stability holds for $\cP$, then deformation holds for $\cP$.
\end{Theorem}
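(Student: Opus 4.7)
The plan is to introduce an auxiliary variable $T$ and use the perturbation $x \rightsquigarrow x+T^N$. Unlike the naive candidate $T$ or $x+T$, the element $T^N$ lies in a high power of the maximal ideal, so $\m$-adic stability applies; and at the same time $T^N+x$ is monic in $T$, so its quotient is a finite free extension, which will give the faithful flatness needed to pull $\cP$ back to $R$. Concretely, suppose $R/(x)$ satisfies $\cP$ for a non-zero divisor $x \in \m$, and set $A := R[T]_{(\m,T)}$, a Noetherian local ring with maximal ideal $\mathfrak{M}=(\m,T)A$.

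\medskip

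\noindent The first step is to observe that $A/(x)A \cong (R/(x))[T]_{(\m/(x),T)}$, so by hypothesis (1) applied to $R/(x)$ the ring $A/(x)A$ satisfies $\cP$. Since $R \to A$ is flat, $x$ remains a non-zero divisor in $A$. Applying $\m$-adic stability of $\cP$ to the local ring $A$ and the element $x$, there exists an integer $N$ such that $A/(x+\delta)A$ satisfies $\cP$ for every $\delta \in \mathfrak{M}^N$. Taking in particular $\delta = T^N$, the ring $A/(x+T^N)A$ satisfies $\cP$.

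\medskip

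\noindent The second step is to identify $A/(x+T^N)A$ as a faithfully flat extension of $R$. The polynomial $T^N+x$ is monic in $T$, so it is a non-zero divisor in $R[T]$ and $B := R[T]/(T^N+x)$ is free of rank $N$ as an $R$-module. Reducing modulo $\m$ gives $(R/\m)[T]/(T^N)$, which is local, so $(\m,T)$ is the unique maximal ideal of $B$ lying over $\m$. Therefore $B_{(\m,T)} \cong A/(x+T^N)A$ is local, and $R \to A/(x+T^N)A$ is a local, flat morphism of Noetherian local rings, hence faithfully flat. By hypothesis (2), $R$ itself satisfies $\cP$, which is the conclusion of deformation.

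\medskip

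\noindent The main conceptual point, and the part that requires care, is the choice of perturbation: one cannot simply use $T$ or $x+T$ since neither lies in $\mathfrak{M}^N$ for $N \geq 2$. Replacing $T$ by $T^N$ sacrifices the property that the quotient is $R$ on the nose, but monicity preserves freeness over $R$, so the quotient is still faithfully flat over $R$. Beyond this, the remaining verifications (that $x$ is a non-zero divisor in $A$, that $T^N+x$ is a non-zero divisor in $R[T]$, and that $B$ has a unique maximal ideal over $\m$) are standard and routine.
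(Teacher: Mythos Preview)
Your proof is correct and follows essentially the same approach as the paper: pass to $A=R[T]_{(\m,T)}$, use condition (1) to get $A/(x)$ satisfies $\cP$, invoke stability to perturb $x$ to $x+T^N$ (or $x-T^N$ in the paper), and then descend $\cP$ to $R$ via condition (2) using faithful flatness of $R \to A/(x+T^N)$. The only cosmetic differences are that the paper argues by contrapositive and establishes faithful flatness via the intermediate subring $R[T^N]_{(\m,T^N)}$ (whose quotient by $x-T^N$ is visibly $R$), whereas you argue directly and obtain freeness from the monicity of $T^N+x$; your route is slightly cleaner but the content is the same.
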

\begin{proof}
Assume that deformation does not hold for $\cP$. This means that there exists a local ring $(R,\m)$ and a non-zero divisor $x \in \m$ such that $R/(x)$ satisfies $\cP$, but $R$ does not. For $N \geq 1$, consider the rings $A_N=R[T^N]_{(\m,T^N)}$ and $B=R[T]_{(\m,T)}$, where $T$ is a variable on $R$. Clearly, $x$ is still a non-zero divisor on $B$. It is easy to see that $B$ is free over $A_N$, and it follows that $R_N=B/(x-T^N)B$ is free over $S_N=A_N/(x-T^N)A_N$. In particular, $S_N \to R_N$ is faithfully flat. However, observe that $S_N \cong R$ does not satisfy $\cP$, hence $R_N$ does not satisfy $\cP$ thanks to condition (2). Finally, observe that $B/(x) \cong (R/(x))[T]_{(\m,T)}$. Since $R/(x)$ satisfies $\cP$, so does $B/(x)$ by condition (1).

We have shown that, for the local ring $B$, the non-zero divisor $x$ is such that $B/(x)$ satisfies $\cP$, but $B/(x-T^N)$ does not satisfy $\cP$ for any $N \geq 1$. Hence $\cP$ is not stable.
\end{proof}

\begin{Remark} \label{Remark strictly filter} In the notation of the proof of Theorem \ref{THM P->D}, one can show that $x-T^N$ is actually a strictly filter regular element in $B$ for all $N \geq 1$ (see Definition \ref{Definition strictly filter regular}). So the same proof actually shows that, if $\m$-adic stability of $\cP$ holds for all strictly filter regular elements, then deformation of $\cP$ holds.
\end{Remark}

\section{Stability of F-rationality} \label{Section perturbation F-rat}

The goal of this section is to prove that $\m$-adic stability of F-rationality holds in full generality. In light of Theorem \ref{THM P->D}, this gives another proof of the fact that F-rationality deforms \cite{HochsterHunekeSmoothBaseChange}. Our proof will employ the notion of F-rational signature, introduced by Hochster and Yao \cite{HochsterYao}. We start by recalling the relevant definitions and known facts.

Let $(R,\m)$ be a local ring of prime characteristic $p>0$. The Frobenius endomorphism  on $R$ induces maps $F_i\colon H^i_\m(R) \to H^i_\m(R)$ on local cohomology modules supported at the maximal ideal. If no confusion may arise, we will denote $F_i$ just by $F$.

\begin{Definition}\label{def F-rat}
Let $(R,\m)$ be a $d$-dimensional Cohen-Macaulay local ring of prime characteristic. Then $R$ is called {\it F-rational} if, for all elements $c$ of $R$ not in any minimal prime, the Frobenius actions $cF^e \colon H^d_\m(R) \to H^d_\m(R)$ are injective for all integers $e \gg 0$.
\end{Definition}

The condition that $cF^e$ is injective for all $e \gg 0$ and all $c$ not in any minimal prime of $R$ is equivalent to saying that the zero submodule of $H^d_\m(R)$ is tightly closed. The definition of F-rationality is originally formulated in terms of tight closure of parameter ideals, but it is equivalent to Definition \ref{def F-rat} for a Cohen-Macaulay local ring (see \cite{Smith}).

\begin{Lemma} \label{Lemma nzd} Let $(R,\m)$ be a local ring of Krull dimension $d>0$ and $x \in \m$ be a parameter. If $R/(x)$ is a domain, then $R$ is a domain.
\end{Lemma}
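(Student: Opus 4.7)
My plan is to produce a minimal prime of $R$ that turns out to be zero, from which it will follow that $(0)$ is a prime ideal and hence $R$ is a domain. Because $R$ is Noetherian local of dimension $d$, we have $\dim R = \max_{\p_i \in \Min(R)} \dim R/\p_i$, so there exists a minimal prime $\p$ with $\dim R/\p = d$. The goal is to show that this particular $\p$ equals zero.

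The first step will be to show $x \notin \p$ and $\p \subseteq (x)$. If $x$ were in $\p$, then $R/\p$ would be a quotient of $R/(x)$, forcing $\dim R/\p \le \dim R/(x) = d-1$ and contradicting the choice of $\p$. Hence $\bar x$ is a nonzero element of the maximal ideal of the Noetherian local domain $R/\p$, and so $\dim(R/\p)/(\bar x) = d-1$ by Krull's height theorem. Now $(R/\p)/(\bar x) = R/(\p+(x))$ is also a quotient of the domain $R/(x)$ (of dimension $d-1$) by the image $\bar\p$ of $\p$, and the two quotients have the same dimension $d-1$. But an ideal $I$ in a Noetherian local domain $A$ with $\dim A/I = \dim A$ must be zero: any minimal prime $\q$ of $I$ realizing $\dim A/\q = \dim A$ satisfies $\height\q + \dim A/\q \le \dim A$, forcing $\q = 0$, and then $I \subseteq \q = 0$. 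Applying this to $A = R/(x)$ and $I = \bar\p$ gives $\p \subseteq (x)$.

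The second step will be a Nakayama argument. For each $a \in \p \subseteq (x)$, write $a = xb$; since $\p$ is prime and $x \notin \p$, the element $b$ must lie in $\p$, so $a \in x\p$. This gives $\p = x\p \subseteq \m\p$, and because $\p$ is finitely generated in the Noetherian ring $R$, Nakayama's lemma forces $\p = 0$. Therefore $(0)$ is a prime ideal of $R$, i.e. $R$ is a domain.

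The only technical subtlety, and thus the likely main obstacle in execution, is the dimension comparison in step one, which depends on correctly pairing $\dim R/\p = d$ with $\dim R/(x) = d-1$ and using that a full-dimensional quotient of a local domain forces the relevant ideal to vanish; once this comparison is in place, the primeness of $(x)$ and Nakayama's lemma complete the proof with no further input.
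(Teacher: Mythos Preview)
Your proof is correct and cleaner than the paper's. You go directly for a minimal prime $\p$ of maximal coheight, show $\p\subseteq(x)$ by a dimension comparison in the domain $R/(x)$, and finish with the standard $\p=x\p$ Nakayama trick. One small point you leave implicit is that $\dim R/(x)=d-1$; this is forced since $R/(\p+(x))$ already has dimension $d-1$ and is a quotient of $R/(x)$, while $x$ being a parameter gives $\dim R/(x)\le d-1$. With that filled in, everything is airtight.

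The paper takes a longer route: it first argues separately that $x$ avoids every minimal prime (by producing two distinct minimal primes of $R/(x)$ otherwise), then that $x$ avoids every embedded associated prime (localizing and showing $\sqrt{0}=0$ via $\sqrt{0}\subseteq\bigcap_n(x^n)$), and only then uses that $(x)$ is prime and $x$ is a non-zerodivisor to conclude $R$ is a domain. Your argument bypasses the associated-prime step entirely and never needs to handle embedded primes; the paper's approach does isolate ``$x$ is a non-zerodivisor'' as an intermediate statement, but since that follows immediately once $R$ is a domain, your shortcut loses nothing.
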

\begin{proof}
First we prove that $x$ is a non-zero divisor on $R$, or, equivalently, that  $x$ is not contained in any associated prime of $R$.
We start with minimal primes: assume by way of contradiction that $x \in Q$ for some prime $Q \in \min\Ass(R)$. Clearly, $Q/(x) \in \min\Ass_{R/(x)}(R/(x))$. Now let $Q' \in \min\Ass_R(R)$ be such that $\dim(R/Q') = d$. Because $x$ is a parameter, we must have $x \notin Q'$. Choose a prime $P \in \min\Ass_R(R/(x,Q'))$. Observe that $\dim(R/P)=d-1$, and thus $P/(x) \in \min\Ass_{R/(x)}(R/(x))$. Since $Q$ is a minimal prime of $R$, we have that $Q' \not\subseteq Q$, and thus $P \ne Q$. It follows that $P/(x) \ne Q/(x)$ are two distinct minimal primes of $R/(x)$, which contradicts the assumption that $R/(x)$ is a domain.

Assume now that $x \in Q$ for some associated prime $Q$ of $R$. After localizing at $Q$, we still have that $(R/(x))_Q$ is a domain; hence we may assume without loss of generality that $\m \in \Ass(R)$. Let $y \in \sqrt{0}$. Then $y^n = 0 \in (x)$ for $n \gg 0$. Since $R/(x)$ is a domain, $(x)$ is a prime ideal. It follows that $y \in (x)$, so we can find $y_1 \in R$ such that $y=xy_1$. We then have $y=xy_1 \in \sqrt{0} = \bigcap_{Q \in \min\Ass(R)} Q$.
By the claim, we conclude that $y_1 \in \sqrt{0}$. Repeating this process, we see that for every $n \in \N$ there exists $y_n \in R$ such that $y=x^n y_n$, so that $y \in \bigcap_n (x)^n = (0)$. This shows that $\sqrt{0} = 0$, hence $R$ is reduced. However, this contradicts the assumption that $\m \in \Ass(R)$, and completes the proof that $x$ is a non-zero divisor on $R$. 

To show that $R$ is a domain, assume that $yz = 0$ for some $y,z \in R$. We can write $y=x^{m}y'$ and $z=x^nz'$ for some $y',z' \in R \smallsetminus (x)$. Since we have already proved that $x$ is a non-zero divisor, from $yz = x^{m+n}y'z' = 0$ we conclude that $y'z'=0$. However, this implies that $y'z' \in (x)$, and thus either $y' \in (x)$ or $z' \in (x)$. A contradiction.
\end{proof}

In what follows, we will denote by $\ul{x}=x_1,\ldots,x_c$ a sequence of elements of $R$. Given two such sequences $\ul{x}=x_1,\ldots,x_c$ and $\ul{y}=y_1,\ldots,y_c$ of the same length, we will denote by $\ul{x}+\ul{y}$ the sequence $x_1+y_1,\ldots,x_c+y_c$. We will say that $\ul{x}=x_1,\ldots,x_c$ is a system of parameters of length $c$ if each $x_i$ is a parameter in $R/(x_1,\ldots,x_{i-1})$. We will call $\ul{x}$ a full system of parameters if, in addition, $(\ul{x})$ is an $\m$-primary ideal. 

\begin{Corollary} \label{Corollary F-regular and F-rational nzd} Let $(R,\m)$ be a local ring, and $c$ be an integer such that $\dim(R)>c>0$. Let $\ul{x}$ be a system of parameters of length $c$. If $R/(\ul{x})$ is F-rational, then $R$ is F-rational. In particular, $R$ is a Cohen-Macaulay domain.
\end{Corollary}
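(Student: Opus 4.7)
The plan is to reduce the statement to the deformation theorem for F-rationality of Hochster--Huneke, which requires showing that $R$ is Cohen--Macaulay and that $\ul{x}$ is a regular sequence in $R$. The key tool for the first half will be Lemma \ref{Lemma nzd}, iterated along the parameter sequence.

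The first step is to argue by descending induction on $i \in \{0,1,\ldots,c\}$ that each quotient $R/(x_1,\ldots,x_i)$ is a domain. For the base case $i=c$, the ring $R/(\ul{x})$ is F-rational and hence a (normal) Cohen--Macaulay domain. For the inductive step, assuming $R/(x_1,\ldots,x_{i+1})$ is a domain, I note that the hypothesis $\dim R > c$ ensures
\[
\dim R/(x_1,\ldots,x_i) \geq \dim R - i > c - i \geq 1,
\]
so the ring $R/(x_1,\ldots,x_i)$ has positive dimension, $x_{i+1}$ is a parameter in it, and its quotient by $x_{i+1}$ is the domain $R/(x_1,\ldots,x_{i+1})$. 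Lemma \ref{Lemma nzd} then forces $R/(x_1,\ldots,x_i)$ to be a domain, completing the induction.

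Taking $i=0$ yields that $R$ itself is a domain, and for each $0 \leq i < c$, the element $x_{i+1}$ is a nonzero element of the domain $R/(x_1,\ldots,x_i)$, hence a non-zero divisor. Therefore $\ul{x}$ is a regular sequence in $R$. Since $R/(\ul{x})$ is Cohen--Macaulay (being F-rational) and $\ul{x}$ is a regular sequence of length $c$, the ring $R$ is Cohen--Macaulay as well. Finally, applying the Hochster--Huneke deformation theorem for F-rationality \cite{HochsterHunekeSmoothBaseChange} one element at a time along the regular sequence $\ul{x}$ yields that $R$ is F-rational.

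There is no genuine obstacle here beyond careful bookkeeping: the real content has been packaged into Lemma \ref{Lemma nzd} and the known deformation of F-rationality. The only delicate point is verifying the dimension condition at every stage of the descending induction, which is why the hypothesis $\dim R > c$ is imposed—it is precisely what keeps $R/(x_1,\ldots,x_i)$ positive-dimensional for $i < c$ so that Lemma \ref{Lemma nzd} is available.
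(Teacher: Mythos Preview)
Your proof is correct and follows essentially the same approach as the paper's: both use Lemma~\ref{Lemma nzd} iteratively to show the intermediate quotients are domains (hence the $x_i$ are non-zero divisors), and then invoke deformation of F-rationality \cite{HochsterHunekeSmoothBaseChange}. The only difference is organizational: the paper compresses everything into a single induction on $c$ (reducing to $c=1$), whereas you run two separate passes—first establishing that each $R/(x_1,\ldots,x_i)$ is a domain, then propagating F-rationality—and insert an unnecessary but harmless verification that $R$ is Cohen--Macaulay before the final step.
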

\begin{proof}
It suffices to show the statement for $c=1$, and $\ul{x}=x$ a parameter. Since $R/(x)$ is F-rational, it is a domain. It follows from Lemma \ref{Lemma nzd} that $R$ is a domain, hence $x$ is a non-zero divisor in $R$. Since F-rationality deforms \cite{HochsterHunekeSmoothBaseChange}, it follows that $R$ is F-rational.
\end{proof}

\begin{Remark}
Linquan Ma has pointed out to us a different proof of Corollary \ref{Corollary F-regular and F-rational nzd}: after completing $\underline{x}$ to a full system of parameters $\underline{x},\underline{y}$ on $R$, we observe that the tight closure of the ideal $(\underline{x},\underline{y})$ in $R$ is contained in the lift to $R$ of the tight closure of the ideal $(\underline{y})$ in $R/(\underline{x})$, by persistence of tight closure. As the latter is trivial by assumption, it follows that $(\underline{x},\underline{y})$ is tightly closed, and hence $R$ is F-rational.
\end{Remark}

\begin{Definition}[\cite{HochsterYao} and \cite{SmirnovTucker}] \label{Definition F-rational signature}
Let $(R, \m)$ be a Cohen-Macaulay local ring and $\ul{x}$ be a full system of parameters in $R$. The F-rational signature of $R$ is defined as 
\[
\rsig(R) = \inf \left \{ \ehk(\ul{x}) - \ehk(\ul{x}, u) \mid u \notin (\ul{x}), \m u \subseteq (\ul{x}) \right \}.
\]

A closely related invariant, the relative F-rational signature of $R$, is defined as 
\[
\csig(R) = \inf \left \{ \frac{\ehk(\ul{x}) - \ehk(I)}{\length (R/(\ul{x})) - \length (R/I)} \ \bigg| \ (\ul{x}) \subsetneq I, \m I \subseteq (\ul{x}) \right \}.
\]
\end{Definition}
It is known that these invariants are independent on the choice of $\ul{x}$.
The main property of the F-rational signature is that, for a Cohen-Macaulay local ring $R$, one has $\rsig(R) > 0$ if and only if $R$ is F-rational. 
It is not hard to see that $\length ((\ul{x}:\m)/(\ul{x}))\csig(R) \geq \rsig(R) \geq \csig(R)$, and in particular 
$R$ is F-rational if and only $\csig(R)>0$ \cite[Corollary 4.12]{SmirnovTucker}.

Our goal is to show that these invariants are $\m$-adically continuous as it was shown for Hilbert--Kunz multiplicity in \cite{PolstraSmirnov}. We start a modification of a uniform convergence result for Hilbert-Kunz multiplicities that was proved By Polstra and the second author in \cite[Corollary~3.6]{PolstraSmirnov}.

\begin{Theorem}\label{very uniform convergence}
Let $(R, \mathfrak m)$ be a $d$-dimensional F-finite Cohen-Macaulay local ring of prime characteristic $p>0$, and $\ul{x}$ be a system of parameters of length $c>0$ such that $\widehat{R}/(\ul{x})\widehat{R}$ is reduced.
For any integer $e_0 > 0$ there exist a constant $C \geq 0$ and an integer $N\geq 1$ such that, for any $e\geq 1$, any $\ul{\delta} \in \pow{\mathfrak m^N} c$, and any ideal $I$ such that $\m^{[p^{e_0}]} \subseteq I$, one has
\[
\left |\ehk (I, R/(\ul{x}+\ul{\delta})) - p^{-ed} \length \left (R/(\ul{x} + \ul{\delta}, I^{[p^e]}) \right) \right| \leq C p^{-e}.
\]
\end{Theorem}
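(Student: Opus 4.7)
The plan is to follow the strategy of \cite[Corollary 3.6]{PolstraSmirnov}, which establishes the analogous uniform convergence statement for a fixed ring, and to make it uniform in the perturbation $\ul{\delta}$. After passing to the $\m$-adic completion (which preserves all the relevant lengths and multiplicities), one may assume $R$ is complete and $R/(\ul{x})$ is reduced. By the $\m$-adic stability of regular sequences, and the associated graded isomorphism, from \cite{MaPhamSmirnov}, for $N$ large enough every $\ul{x}+\ul{\delta}$ with $\ul{\delta}\in\pow{\m^N}{c}$ is a regular sequence on $R$, and the ring $S_{\ul{\delta}} := R/(\ul{x}+\ul{\delta})$ remains Cohen-Macaulay of dimension $d-c$ and reduced. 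Thus each $S_{\ul{\delta}}$ satisfies the hypotheses under which \cite[Corollary 3.6]{PolstraSmirnov} delivers, for each individual $\ul{\delta}$, a constant $C_{\ul{\delta}}$ making the desired inequality hold.

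The core task is then to show that $C_{\ul{\delta}}$ can be chosen independently of $\ul{\delta}$. The argument in \cite[Corollary 3.6]{PolstraSmirnov} rests on decomposing the Frobenius pushforward $F^e_{*} S_{\ul{\delta}}$ into a free $S_{\ul{\delta}}$-summand, whose rank produces the leading $p^{-e(d-c)}\ehk$ contribution, plus a residual module whose length modulo $I$ is controlled by a uniform Artin-Rees bound. The idea is to lift this bookkeeping to the fixed ambient ring $R$ via the identification $F^e_{*} S_{\ul{\delta}} \cong F^e_{*} R / (\ul{x}+\ul{\delta})\, F^e_{*} R$ as $R$-modules, noting that the ideals $(\ul{x}+\ul{\delta})$ stay uniformly $\m$-adically close to $(\ul{x})$ for $\ul{\delta}\in\pow{\m^N}{c}$. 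An Artin-Rees estimate applied to the finitely generated $R$-module $F^e_{*} R$, together with the fact that its free-rank decomposition is governed by $R$ itself, should then yield an error bound depending only on $R$, $\ul{x}$, and $e_0$.

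The main obstacle is precisely this uniformity: one has to verify that the colength bounds on the defect modules of $F^e_{*} S_{\ul{\delta}}$ can be made independent of $\ul{\delta}$. Rather than invoking \cite[Corollary 3.6]{PolstraSmirnov} as a black box, this will require re-executing that proof while tracking the perturbation parameter throughout, using that small perturbations of $\ul{x}$ do not enlarge the relevant Artin-Rees indices beyond a uniform threshold. Once this is in place, the final estimate follows by enlarging $N$ so that the Artin-Rees constants for the family $\{(\ul{x}+\ul{\delta})^{[p^e]}\}_{\ul{\delta}}$ can all be dominated by the same $C$, which then simultaneously controls the difference $|\ehk(I, S_{\ul{\delta}}) - p^{-e(d-c)}\length(R/(\ul{x}+\ul{\delta},I^{[p^e]}))|$ up to $Cp^{-e}$ uniformly in $e$, in $\ul{\delta}\in\pow{\m^N}{c}$, and in $I\supseteq \m^{[p^{e_0}]}$.
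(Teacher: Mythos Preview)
Your plan is in the right spirit but misallocates effort. You describe \cite[Corollary~3.6]{PolstraSmirnov} as a uniform convergence statement ``for a fixed ring,'' and accordingly devote almost the entire plan to manufacturing a constant independent of $\ul{\delta}$. But that corollary is already the perturbation result: it produces $N$ and $C$ such that the inequality holds uniformly in $e$ and in $\ul{\delta}\in\pow{\m^N}{c}$, for the single ideal $I=\m$. So the uniformity in $\ul{\delta}$ is not the obstacle; it is the starting point, and your proposed re-execution of the proof of \cite{PolstraSmirnov} ``tracking the perturbation parameter'' simply reproves what is already cited.

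The genuinely new content is uniformity in the ideal $I$ over all $I\supseteq\m^{[p^{e_0}]}$, which you mention only in the last sentence and without any mechanism. The paper's argument is that the proofs of \cite[Theorem~3.5 and Corollary~3.6]{PolstraSmirnov} go through verbatim for any fixed $\m$-primary $I$, yielding a constant $C(I)$; then, following Tucker's convergence machinery (see also \cite[Theorem~4.3]{PolstraTucker}), this constant arises from a fixed module and is monotone, so that $C(I)\le C(\m^{[p^{e_0}]})=p^{e_0}C(\m)$. That single observation closes the argument. Your plan to re-run everything with Artin--Rees bookkeeping could in principle recover this, but as written it does not: the phrase ``an error bound depending only on $R$, $\ul{x}$, and $e_0$'' is the desired conclusion, not its justification, and nothing in your outline explains why the bound should be uniform over the infinite family of ideals $I$.
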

\begin{proof}
For $I = \m$ the statement was proved in \cite[Corollary~3.6]{PolstraSmirnov}, but its proof and 
the proof of \cite[Theorem~3.5]{PolstraSmirnov} applies to any ideal. 
To see the uniformity one can further follow the proofs of \cite{Tucker} and note that the right-hand side constant $C(I)$
for an ideal $I$ comes from a fixed module, and, thus, will be bounded above by the constant for $C(\m^{[p^{e_0}]}) = p^{e_0} C(\m)$. See also \cite[Theorem~4.3]{PolstraTucker}.
\end{proof}

\begin{Corollary}\label{ehk uniformly close}
Let $(R, \m)$ be a $d$-dimensional F-finite Cohen-Macaulay local ring of prime characteristic $p>0$, and $\ul{x}$ be a system of parameters of length $c>0$ such that $\widehat{R}/(\ul{x})\widehat{R}$ is reduced.
For any $\varepsilon > 0$ and any integer $e_0 > 0$, there exists an integer $N\geq 1$ such that for any 
$\ul{\delta} \in \pow{\mathfrak m^N} c$ and any ideal $I$ such that $\m^{[p^{e_0}]} \subseteq I$, one has 
\[
\length (R/(I, \ul{x})) = \length (R/(I, \ul{x} + \ul{\delta})) \text { and }
\left |\ehk (I, R/(\ul{x}+\ul{\delta})) - \ehk (I, R/(\ul{x}))  \right| < \varepsilon
\]
\end{Corollary}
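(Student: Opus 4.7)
The strategy is to apply Theorem \ref{very uniform convergence} simultaneously to $\ul{x}$ and to $\ul{x}+\ul{\delta}$, choose the Frobenius iteration $e$ large enough so that the error terms on both sides are within $\varepsilon/2$, and then absorb the perturbation $\ul{\delta}$ into the Frobenius power $I^{[p^e]}$. The trivial but crucial observation driving everything is that whenever all components of $\ul{\delta}$ lie in an ideal $J$, the ideals $(\ul{x},J)$ and $(\ul{x}+\ul{\delta},J)$ coincide, and hence so do the lengths of the corresponding quotients.

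First I would apply Theorem \ref{very uniform convergence} with the given $e_0$ to obtain the constant $C \geq 0$ and integer $N_0 \geq 1$. Next I would fix an integer $e \geq 1$ large enough that $Cp^{-e} < \varepsilon/2$. Finally I would choose $N \geq N_0$ large enough that $\m^N \subseteq \m^{[p^{e+e_0}]}$; this is possible because $\m^{n(p^{e+e_0}-1)+1} \subseteq \m^{[p^{e+e_0}]}$, where $n$ denotes the minimal number of generators of $\m$.

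Given any $\ul{\delta} \in \pow{\m^N}{c}$ and any ideal $I$ with $\m^{[p^{e_0}]} \subseteq I$, each component of $\ul{\delta}$ lies in $\m^{[p^{e+e_0}]} \subseteq (\m^{[p^{e_0}]})^{[p^e]} \subseteq I^{[p^e]} \subseteq I$. In particular $(\ul{x}, I) = (\ul{x}+\ul{\delta}, I)$, which gives the first length equality in the statement, and similarly $(\ul{x}, I^{[p^e]}) = (\ul{x}+\ul{\delta}, I^{[p^e]})$, so the two length quantities on the right-hand side of the bound in Theorem \ref{very uniform convergence} agree. Applying Theorem \ref{very uniform convergence} both to $\ul{x}$ and to $\ul{x}+\ul{\delta}$ (noting the constant $C$ is independent of the perturbation), the triangle inequality then yields
\[
|\ehk(I, R/(\ul{x}+\ul{\delta})) - \ehk(I, R/(\ul{x}))| \leq 2Cp^{-e} < \varepsilon.
\]

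The proof is almost mechanical once Theorem \ref{very uniform convergence} is in place: all the genuine analytic work is concentrated there. The only point that requires attention is the order of quantifiers --- one must first fix $e$ (to control the error in the uniform convergence bound), and only afterward pick $N$ large enough that $\ul{\delta}$ is absorbed into $I^{[p^e]}$ and not merely into $I$, so $N$ must depend on both $e_0$ and (through $e$) on $\varepsilon$.
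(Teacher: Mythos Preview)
Your proof is correct and follows essentially the same approach as the paper's own proof: apply Theorem~\ref{very uniform convergence}, fix $e$ so that $Cp^{-e} < \varepsilon/2$, then enlarge $N$ so that $\ul{\delta}$ is absorbed into $I^{[p^e]}$, and conclude by the triangle inequality. Your write-up is in fact slightly more careful in places (you explicitly justify the first length equality and the containment $\m^N \subseteq \m^{[p^{e+e_0}]}$), but the logical structure is identical.
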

\begin{proof}
We apply Theorem~\ref{very uniform convergence} to get $C$ and we choose $e$ so that $Cp^{-e} < \varepsilon/2$. 
We may further enlarge $N$ so that $\m^N \subseteq (\m^{[p^{ee_0}]},\ul{x})$, so that 
for all $\m^{[p^{e_0}]} \subseteq I$ we have
\[
\length \left (R/(\ul{x} + \ul{\delta}, I^{[p^e]}) \right) = \length \left (R/(\ul{x}, I^{[p^e]}) \right).
\]
Then by Theorem~\ref{very uniform convergence} 
\begin{align*}
|\ehk(I, R/(\ul{x})) - \ehk(I, R/(\ul{x} +  \ul{\delta}))| &\leq 
\left |\ehk (I, R/(\ul{x}+\ul{\delta})) - p^{-ed} \length \left (R/(\ul{x} + \ul{\delta}, I^{[p^e]}) \right) \right|
\\ &+ \left |\ehk (I, R/(\ul{x})) - p^{-ed} \length \left (R/(\ul{x}, I^{[p^e]}) \right) \right| < \varepsilon. \qedhere
\end{align*} 
\end{proof}

\begin{Theorem} \label{continuity of rational signature}
Let $(R, \m)$ be a $d$-dimensional F-finite Cohen-Macaulay local ring of prime characteristic $p>0$, and $\ul{x}$ be a system of parameters of length $c>0$ such that $\widehat{R}/(\ul{x})\widehat{R}$ is reduced. For any $\varepsilon > 0$ there exists an integer $N \geq 1$ such that for any $\ul{\delta} \in \pow{\mathfrak m^N} c$
\[
|\rsig (R/(\ul{x})) - \rsig (R/(\ul{x} + \ul{\delta})) | < \varepsilon
\]
and
\[
|\csig (R/(\ul{x})) - \csig (R/(\ul{x} + \ul{\delta})) | < \varepsilon.
\]
\end{Theorem}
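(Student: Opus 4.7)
The plan is to reduce both infima to infima over the same set of ideals of $R$ by showing that, for any fixed lift $\ul{y}\in R$ of a full system of parameters of $R/(\ul{x})$, the parameter ideals $J=(\ul{y},\ul{x})$ and $J_\delta=(\ul{y},\ul{x}+\ul{\delta})$ of $R$ coincide once $\ul{\delta}$ is deep enough. Once this is established, Corollary~\ref{ehk uniformly close} supplies the uniform control on Hilbert-Kunz multiplicities needed to close the argument.

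I would first fix such a $\ul{y}$, pick $e_0$ with $\m^{[p^{e_0}]}\subseteq J$, and apply Corollary~\ref{ehk uniformly close} with tolerance $\varepsilon/4$ and exponent $e_0$ to obtain an integer $N$ (enlarged so that $\m^N\subseteq J$) such that for every $\ul{\delta}\in(\m^N)^{\oplus c}$ and every ideal $I\subseteq R$ with $\m^{[p^{e_0}]}\subseteq I$,
\[
\length(R/(I,\ul{x}))=\length(R/(I,\ul{x}+\ul{\delta}))\quad\text{and}\quad\bigl|\ehk(I,R/(\ul{x}))-\ehk(I,R/(\ul{x}+\ul{\delta}))\bigr|<\varepsilon/4.
\]
Since $\ul{\delta}\in J^{\oplus c}$, the containment $J_\delta\subseteq J$ is immediate. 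For the reverse, both $J$ and $J_\delta$ are parameter ideals of the Cohen-Macaulay ring $R$, so their colengths equal their Hilbert-Samuel multiplicities; the classical preservation of these multiplicities under perturbations in sufficiently high powers of $\m$ (after possibly enlarging $N$) gives $\length(R/J)=\length(R/J_\delta)$, which together with $J_\delta\subseteq J$ forces $J=J_\delta$.

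With $J=J_\delta$ in hand, for each of the signatures $\rsig$ and $\csig$ the set of competing ideals in the infimum depends only on the ambient $\m$-primary ideal, which is now the same ideal $J$ in both $R/(\ul{x})$ and $R/(\ul{x}+\ul{\delta})$; hence the two infima are taken over the same collection of ideals $I$ of $R$, each containing $\m^{[p^{e_0}]}$. The uniform estimate then yields
\[
\bigl|\bigl(\ehk(J,R/(\ul{x}))-\ehk(I,R/(\ul{x}))\bigr)-\bigl(\ehk(J,R/(\ul{x}+\ul{\delta}))-\ehk(I,R/(\ul{x}+\ul{\delta}))\bigr)\bigr|<\varepsilon/2
\]
uniformly in $I$, and passing to infima gives $|\rsig(R/(\ul{x}))-\rsig(R/(\ul{x}+\ul{\delta}))|\leq\varepsilon/2<\varepsilon$. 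For $\csig$ the denominators $\length(R/J)-\length(R/I)$ are the same positive integer in both quotient rings, so dividing the same numerator estimate by an integer $\geq 1$ yields $|\csig(R/(\ul{x}))-\csig(R/(\ul{x}+\ul{\delta}))|<\varepsilon$.

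The main obstacle is the identification $J=J_\delta$ for deep $\ul{\delta}$, which hinges on the classical invariance of Hilbert-Samuel multiplicities of parameter ideals in Cohen-Macaulay rings under perturbations in high powers of the maximal ideal. Without matching the ambient parameter ideals in this way, the competitor sets for the two signatures in $R/(\ul{x})$ and $R/(\ul{x}+\ul{\delta})$ would not coincide, and the one-to-one comparison at the heart of the plan would break down.
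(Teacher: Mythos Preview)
Your argument is correct and is essentially the paper's approach: fix $\ul{y}$, identify the competitor sets for the two infima via $J=J_\delta$, and apply Corollary~\ref{ehk uniformly close} uniformly before passing to infima. The paper is terser about the identification $J=J_\delta$ (it simply asserts $\length(R/(\ul{x},\ul{y}))=\length(R/(\ul{x}+\ul{\delta},\ul{y}))$) and inserts a separate case for $\csig=0$ that your uniform treatment makes unnecessary; your detour through Hilbert--Samuel multiplicities to obtain $J=J_\delta$ works, though one can also argue directly after enlarging $N$ so that $\m^N\subseteq\m J$ and inverting the resulting unipotent change-of-generators matrix.
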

\begin{proof}
Fix $\varepsilon > 0$. If $\csig (R/(\ul{x})) = 0$, then $\rsig(R/(\ul{x})) = 0$, so there exists a system of parameters $\ul{y}$ in $R/(\ul{x})$ and a non-zero element $u \in \soc(R/(\ul{x},\ul{y}))$ such that $\ehk((\ul{y}, u), R/(\ul{x})) = \ehk((\ul{y}), R/(\ul{x})) = \length (R/(\ul{x}, \ul{y}))$. Take $e_0$ such that $\m^{[p^{e_0}]} \subseteq (\ul{x}, \ul{y})$ and apply Corollary~\ref{ehk uniformly close} to get that 
\[
\csig (R/(\ul{x} + \ul{\delta})) \leq \rsig (R/(\ul{x} + \ul{\delta})) \leq \length (R/(\ul{x} +  \ul{\delta},\ul{y})) - \ehk((\ul{y}, u), R/(\ul{x}+ \ul{\delta})) < 
\varepsilon.
\]

Now assume $\csig(R) > 0$. Applying Corollary~\ref{ehk uniformly close} we can find $N$ such that for all $\ul{\delta} \in \pow{\mathfrak m^N} c$ and any ideal $(\ul{x}, \ul{y}) \subseteq I$
\[
\ehk (I, R/(\ul{x})) - \varepsilon <  \ehk (I, R/(\ul{x}+\ul{\delta})) < \ehk (I, R/(\ul{x})) + \varepsilon.
\]
Moreover, because $\length(R/(\ul{x},\ul{y})) = \length(R/(\ul{y},\ul{x}+\ul{\delta}))$ and $ \length(R/(I,\ul{x})) = \length(R/(I,\ul{x}+\ul{\delta}))$, it easily follows that
\begin{align*}
\frac{ \length (R/(\ul{x}, \ul{y})) - \ehk(I, R/(\ul{x}))}{ \length (R/(\ul{x}, \ul{y})) - \length (R/(I, \ul{x}))}
- \varepsilon
& <
\frac{ \length (R/(\ul{x} +  \ul{\delta},\ul{y})) - \ehk(I, R/(\ul{x}+ \ul{\delta}))}{ \length (R/(\ul{x} +  \ul{\delta},\ul{y})) - \length (R/(I, \ul{x} + \ul{\delta}))} \\
& 
< \frac{ \length (R/(\ul{x}, \ul{y})) - \ehk(I, R/(\ul{x}))}{ \length (R/(\ul{x}, \ul{y})) - \length (R/(I, \ul{x}))}
+ \varepsilon.
\end{align*}
The statement is clear after taking the infimum over all ideals $I$ containing $(\ul{y})$. The proof for the F-rational signature is completely analogous.
\end{proof}

\begin{Corollary}\label{F-rational is ok}
Let $(R,\m)$ be a $d$-dimensional F-finite local ring, and let $\ul{x}$ be a system of parameters of length $c>0$ such that $R/(\ul{x})$ is F-rational. There exists an integer $N > 0$ such that $R/(\ul{x}+\ul{\delta})$ is F-rational for any $\ul{\delta} \in \pow{\m^N} c$.
\end{Corollary}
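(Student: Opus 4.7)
The plan is to combine Theorem \ref{continuity of rational signature} (the $\m$-adic continuity of $\rsig$) with the characterization that a Cohen--Macaulay local ring is F-rational if and only if its F-rational signature is strictly positive, applying it with $\varepsilon = \rsig(R/(\ul{x}))/2$. The main tasks are then to check the hypotheses of that theorem and to ensure that the perturbed quotient remains Cohen--Macaulay, so that the criterion actually applies to it.

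First I would verify that $R$ itself is Cohen--Macaulay and that $\widehat{R}/(\ul{x})\widehat{R}$ is reduced. When $c<d$, Corollary \ref{Corollary F-regular and F-rational nzd} shows that $R$ is a Cohen--Macaulay domain (and that $\ul{x}$ is a regular sequence); when $c=d$, an F-rational artinian local ring must be a field, so $\m=(\ul{x})$ and $R$ is regular. In either case the hypotheses on $R$ are met. For reducedness of the completion, I would use that an F-rational local ring is a normal domain, and that for the F-finite (hence excellent) quotient $R/(\ul{x})$ the completion $\widehat{R/(\ul{x})} \cong \widehat{R}/(\ul{x})\widehat{R}$ is still a normal domain, in particular reduced.

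Next I would invoke Theorem \ref{continuity of rational signature} with $\varepsilon = \rsig(R/(\ul{x}))/2$, which is positive since $R/(\ul{x})$ is F-rational, to obtain an integer $N_1$ such that
\[
\rsig(R/(\ul{x}+\ul{\delta})) > \tfrac{1}{2}\rsig(R/(\ul{x})) > 0
\]
for every $\ul{\delta} \in (\m^{N_1})^{\oplus c}$.

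The remaining step, which I expect to be the only point requiring genuine care, is to guarantee that $R/(\ul{x}+\ul{\delta})$ is Cohen--Macaulay, since positivity of $\rsig$ characterizes F-rationality only in the Cohen--Macaulay setting. For this I would extend $\ul{x}$ to a full system of parameters $\ul{x},\ul{y}$ of $R$, choose $N_2$ so that $\m^{N_2}\subseteq (\ul{x},\ul{y})$, and observe that for any $\ul{\delta}\in(\m^{N_2})^{\oplus c}$ one has $(\ul{x}+\ul{\delta},\ul{y}) = (\ul{x},\ul{y})$, which is still $\m$-primary. Hence $\ul{x}+\ul{\delta},\ul{y}$ is a full system of parameters in the Cohen--Macaulay ring $R$, so $\ul{x}+\ul{\delta}$ is a regular sequence and $R/(\ul{x}+\ul{\delta})$ is Cohen--Macaulay. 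Setting $N = \max(N_1,N_2)$, any $\ul{\delta}\in(\m^N)^{\oplus c}$ makes $R/(\ul{x}+\ul{\delta})$ a Cohen--Macaulay local ring with positive F-rational signature, hence F-rational, concluding the proof.
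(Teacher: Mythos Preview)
Your proposal is correct and follows essentially the same route as the paper: verify that $R$ is Cohen--Macaulay and that $\widehat{R}/(\ul{x})\widehat{R}$ is reduced (the paper argues the latter via F-rationality of the completion, you via normality---both are fine), then apply Theorem~\ref{continuity of rational signature} with $\varepsilon=\rsig(R/(\ul{x}))/2$. Your explicit verification that $R/(\ul{x}+\ul{\delta})$ is Cohen--Macaulay and your separate treatment of the case $c=d$ fill in details the paper leaves implicit, but the strategy is the same.
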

\begin{proof}
Let $r=\rsig(R/(\ul{x}))$, so that $r>0$ because $R/(\ul{x})$ is assumed to be F-rational. By Lemma \ref{Lemma nzd}, and because $\widehat{R}/(\ul{x}) \widehat{R}$ is F-rational, hence a domain, the assumptions of Theorem \ref{continuity of rational signature} are satisfied. Therefore we can find an integer $N>0$ such that for every $\delta \in (\m^N)^{\oplus c}$ we have
\[
\left| \rsig(R/(\ul{x}+\ul{\delta})) - r \right| < \frac{r}{2}.
\]
In particular, we have $\rsig(R/(\ul{x}+\ul{\delta})) > \frac{r}{2}>0$, and thus $R/ (\ul{x}+\ul{\delta})$ is F-rational.
\end{proof}

\section{Stability of F-injectivity} \label{Section relation F-injective and perturbation}

We start by reviewing the notions of secondary representation and attached prime. We refer to \cite[7.2]{BrodmannSharp} for unexplained notation and for more details.

\begin{Definition} A \emph{secondary representation} of an $R$-module $W$ is an expression of the form $W=W_1 + \cdots + W_t$, where each $W_i$ is such that
\begin{itemize}
\item $\sqrt{\ann_R(W_i)} = \p_i$ is prime, and
\item given $x \in R$ , the multiplication map $W_i \stackrel{\cdot x}{\longrightarrow} W_i$ is either nilpotent or surjective.
\end{itemize}
The representation is called minimal if $\p_1,\ldots,\p_t$ are all distinct primes, and $W_i \not\subseteq \sum_{j \ne i} W_j$ for all $i=1,\ldots,t$.
\end{Definition}
Modules satisfying the two conditions above are called \emph{secondary}. Just like primary decompositions, secondary representations are not unique in general. However, if $W=W_1+\cdots+W_t$, the set of primes $\Att_R(W)=\{\p_1,\ldots,\p_t\}$ defined as above is an invariant of the module $W$, and it is called the set of {\it attached primes of $W$}.

Evey Artinian $R$-module has a secondary representation \cite[7.2.9]{BrodmannSharp}. In particular, $H^i_\m(R)$ has a secondary representation for every $i \in \Z$, and therefore it makes sense to consider the sets $\Att(H^i_\m(R))$.

\begin{Remark} Let $\ck{W} = \Hom_R(W,E_R(R/\m))$ denote the Matlis dual of a module. When $R$ is complete, one can prove that $\Att(W) = \Ass(\ck{W})$. In particular, if $R$ is a quotient of an $n$-dimensional complete regular local ring $S$, then $\Att(H^i_\m(R)) = \Ass(\Ext^{n-i}_S(R,S))$.
\end{Remark}

\begin{Definition} \label{Definition surjective element} Let $(R,\m)$ be a local ring. An element $x \in \m$ is a \emph{surjective element} if the multiplication map $H^i_\m(R) \stackrel{\cdot x}{\longrightarrow} H^i_\m(R)$ is surjective for all $i \in \Z$. 
\end{Definition}

The definition given above is not the original one, given in \cite{HMS}. However, it was proved to be equivalent in \cite[Proposition 3.3]{MaQuy}.

It follows directly from the definition and the properties of secondary representations that  $x \in \m$ is a surjective element if and only if $x \notin \bigcup_{i=0}^d \Att(H^i_\m(R))$. In particular, if $R$ has a surjective element, then $\m$ is not attached to any local cohomology module $H^i_\m(R)$. Moreover, since $\Ass(R) \subseteq \bigcup_{i=0}^{d} \Att(H^i_\m(R))$ \cite[11.3.9]{BrodmannSharp}, surjective elements are non-zero divisors.

\begin{Definition} \label{Definition strictly filter regular} Let $(R,\m)$ be a local ring of dimension $d$. 
An element $x \in \m$ is called  {\it strictly filter regular} if $x \notin \left(\bigcup_{i=0}^{d} \Att(H^i_\m(R)) \smallsetminus  \{\m\}\right)$.
\end{Definition}

Clearly, surjective elements are strictly filter regular. In fact, it is easy to see that $x$ is a strictly filter regular element if and only if $\coker(H^i_\m(R) \stackrel{\cdot x}{\longrightarrow} H^i_\m(R))$ has finite length for all $i \in \Z$.

Strictly filter regular elements are $\m$-adically stable.

\begin{Lemma}\label{Lemma sfr perturbs}
Let $(R,\m)$ be a local ring, and $x$ be a strictly filter regular element.
There exists an integer $N > 0$ such that for any $\delta \in \m^N$ 
the element $x + \delta$ is strictly filter regular. Moreover, if $x$ is a surjective element, 
then $x + \delta$ is also a surjective element. 
\end{Lemma}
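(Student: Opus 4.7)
My approach is to invoke the attached-prime definition of strict filter regularity, combine it with the finiteness of attached primes for Artinian modules, and then apply Krull's intersection theorem modulo each relevant prime. Specifically, since each local cohomology module $H^i_\m(R)$ is Artinian it admits a minimal secondary representation and hence has only finitely many attached primes (\cite[7.2.11]{BrodmannSharp}), so
\[
S := \bigcup_{i=0}^{d} \Att(H^i_\m(R)) \smallsetminus \{\m\}
\]
is a finite set of primes, and by hypothesis $x \notin \p$ for every $\p \in S$.

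For each $\p \in S$ I would pass to the Noetherian local domain $R/\p$ with maximal ideal $\m/\p$. Since $x \notin \p$, its image $\ov{x}$ is a nonzero element of $\m/\p$, so Krull's intersection theorem yields an integer $N_\p \geq 1$ with $\ov{x} \notin (\m/\p)^{N_\p}$, equivalently $x \notin \p + \m^{N_\p}$. Then for any $\delta \in \m^{N_\p}$ the containment $x + \delta \in \p$ would force $x \in \p + \m^{N_\p}$, a contradiction. Setting $N := \max_{\p \in S} N_\p$, which exists because $S$ is finite, guarantees $x + \delta \notin \p$ for every $\p \in S$ and every $\delta \in \m^N$, so $x+\delta$ is strictly filter regular.

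The surjective case uses the characterization recorded after Definition~\ref{Definition surjective element}: $x$ is a surjective element if and only if $x \notin \bigcup_{i=0}^{d} \Att(H^i_\m(R))$. Because $x \in \m$, this equivalence forces $\m \notin \bigcup_{i} \Att(H^i_\m(R))$, so the larger union coincides with $S$ and the same $N$ works.

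I do not foresee a serious obstacle. The potentially awkward point, namely what to do with the prime $\m$ if it lies in some $\Att(H^i_\m(R))$, dissolves for both statements: $\m$ is excluded from $S$ by the very definition of strict filter regularity, and it cannot appear at all when a surjective element exists in $\m$. The rest is a routine uniform application of Krull's intersection theorem across the finitely many primes in $S$.
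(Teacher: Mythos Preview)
Your proposal is correct and follows essentially the same approach as the paper's proof: both arguments use the finiteness of the set of attached primes of the local cohomology modules, apply Krull's intersection theorem (the paper phrases it as $\bigcap_N (P+\m^N)=P$) to find for each relevant prime $P$ an $N_P$ with $x\notin P+\m^{N_P}$, and then take the maximum. Your treatment of the surjective case, observing that $x\in\m$ forces $\m\notin\bigcup_i\Att(H^i_\m(R))$ so that the same $N$ works, is exactly what the paper does as well.
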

\begin{proof}
Let $d=\dim(R)$, and set $X =  \bigcup_{i = 0}^d \Att(H^i_\m(R))$, and $P \in X \smallsetminus \{\m\}$. Since $\bigcap_N (P+\m^N) = P$, we may take $N$ such that $x \notin (\m^N + P)$. Choosing $N$ that works for every such $P$, for all $\delta \in \m^N$ we then have that $x+\delta \notin P$ for all $P \in X \smallsetminus  \{\m\}$. If $x$ is a surjective element then $\m \notin X$. With the choice made above, for all $\delta \in \m^N$ we have $x+ \delta \notin P$ for all $P \in X$, that is, $x$ is a surjective element.
\end{proof}

\begin{Definition} \label{def F-inj}
Let $(R,\m)$ be a local ring of prime characteristic $p>0$. $R$ is called {\it F-injective} if the Frobenius action $F\colon H^i_\m(R) \to H^i_\m(R)$ is injective for all $i \in \Z$.
\end{Definition}

Deformation of F-injectivity is an important open problem that dates back to Fedder \cite{Fedder}. It is known that F-injectivity deforms in many cases; for instance, if $x$ is a surjective element \cite{HMS}, or if $x$ is a strictly filter regular element and the residue field of $R$ is perfect \cite[Theorem 5.11]{MaQuy}.

By Theorem \ref{THM P->D}, to prove deformation of F-injectivity in full generality it would suffice to prove stability of F-injectivity. In fact, by Remark \ref{Remark strictly filter}, it would suffice to show that F-injectivity is stable for elements which are stricly filter regular elements on $R$.

We are not able to prove stability in general, but we show that it holds in the most relevant cases where deformation of F-injectivity is known to hold (Theorems  \ref{PerturbationFInjective SurjectiveElement} and \ref{Theorem F-inj strictly filter}). 

\subsection{Results on $\m$-adic stability of F-injectivity}
Let $W$ be an $R$-module. We recall that a Frobenius action on $W$ is an additive map $F \colon W \to W$ such that $F(rw) = r^pF(w)$ for all $r \in R$ and $w \in W$. 
We start with an easy but extremely useful lemma.

\begin{Lemma} \label{LemmaInjective}
Let $(R,\m)$ be a local ring, and $c \in \m$ any element. Let $W$ be an Artinian module, with a Frobenius action $F\colon W \to W$. Assume that the map $cF\colon W\to W$ is injective. There exists an integer $N >0$ such that $(c+\delta)F$ is injective for all $\delta \in \m^N$.
\end{Lemma}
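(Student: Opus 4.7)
The plan is to pass to the Matlis dual, where the injectivity of $cF$ becomes a surjectivity problem on a finitely generated module, and then apply Nakayama's lemma. Since $F$ is only semilinear, the correct $R$-linear formulation is to view $F$ as a map $W \to F_*W$, where $F_*W$ denotes restriction of scalars along the Frobenius of $R$; the maps $cF$, $\delta F$, and $(c+\delta)F$ are then all $R$-linear $W \to F_*W$. I would first pass to the completion $\widehat R$, since $W$ is naturally an $\widehat R$-module and the injectivity of $(c+\delta)F$ is a property of the underlying abelian group map. Then $M := W^{\vee} = \Hom_R(W, E_R(R/\m))$ is a finitely generated $R$-module, and the exactness of $\Hom_R(-, E_R(R/\m))$ translates the injectivity of $cF$ into the surjectivity of $(cF)^{\vee} \colon (F_*W)^{\vee} \to M$. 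The goal becomes to show that $((c+\delta)F)^{\vee} = (cF)^{\vee} + (\delta F)^{\vee}$ remains surjective once $\delta$ is sufficiently deep in $\m$.

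The key computation is that if $\delta \in \m^{[p]}$, then the image of $(\delta F)^{\vee}$ lies inside $\m M$. Writing $\delta = \sum_i r_i s_i^p$ with $s_i \in \m$, and using the defining identity $\alpha(r^p y) = r\alpha(y)$ for $\alpha \in (F_*W)^{\vee}$, one has
\[
(\delta F)^{\vee}(\alpha)(w) = \alpha(\delta F(w)) = \sum_i \alpha(s_i^p \cdot r_i F(w)) = \sum_i s_i\, \alpha(r_i F(w)),
\]
where each functional $w \mapsto \alpha(r_i F(w))$ is $R$-linear in $w$ (a direct check) and therefore lies in $M$. By the pigeonhole principle $\m^{\ell(p-1)+1} \subseteq \m^{[p]}$, where $\ell$ is the minimal number of generators of $\m$, so choosing $N = \ell(p-1)+1$ ensures $\delta \in \m^{[p]}$ for every $\delta \in \m^N$.

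Consequently, modulo $\m M$ the images of $((c+\delta)F)^{\vee}$ and $(cF)^{\vee}$ coincide, and hence $\operatorname{Im}((c+\delta)F)^{\vee} + \m M = M$. Applying Nakayama's lemma to the finitely generated quotient $M / \operatorname{Im}((c+\delta)F)^{\vee}$ gives $\operatorname{Im}((c+\delta)F)^{\vee} = M$, and dualizing back yields the injectivity of $(c+\delta)F$.

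The main technical subtlety is the Matlis-duality setup: because $F$ is semilinear, the tempting formula $(cF)^{\vee} = c \cdot F^{\vee}$ is \emph{false} in general, since elements $\alpha \in (F_*W)^{\vee}$ satisfy $\alpha(r^p y) = r\alpha(y)$ rather than $\alpha(rx) = r\alpha(x)$; one must work with the honest $R$-linear maps $cF \colon W \to F_*W$ and their duals from the auxiliary module $(F_*W)^{\vee}$, which need not be identified with $F_*M$ in the absence of $F$-finiteness. Reassuringly, since the Nakayama step only needs the target $M$ to be finitely generated, no hypotheses on $R$ beyond Noetherian local are required.
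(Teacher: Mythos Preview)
Your argument is correct, but the paper's proof is considerably more direct. Rather than dualizing, the paper works on the socle of $W$: since $W$ is Artinian, any nonzero kernel of $(c+\delta)F$ meets $\soc(W)$, so it suffices to show $(c+\delta)F$ is injective there. For $\eta \in \soc(W)$ and $\delta = \sum r_i x_i^p \in \m^{[p]}$ one computes $\delta F(\eta) = \sum r_i x_i^p F(\eta) = \sum r_i F(x_i \eta) = 0$, so $(c+\delta)F(\eta) = cF(\eta)$ and injectivity follows immediately from that of $cF$. Your computation that $\operatorname{Im}(\delta F)^\vee \subseteq \m M$ is precisely the Matlis dual of this socle vanishing, and your Nakayama step is dual to the observation that the socle is essential in an Artinian module. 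The paper's route avoids completion, the module $(F_*W)^\vee$, and Nakayama altogether; your route, on the other hand, makes explicit why finiteness (Artinian on one side, finitely generated on the other) is what drives the result, and would generalize more readily to settings where one has control over $W^\vee$ rather than $W$.
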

\begin{proof}
It suffices to take $N$ such that $\m^{N} \subseteq \m^{[p]}$. In fact, assume that $(c+\delta)F$ was not injective for some $\delta \in \m^{N}$. Then there is a non-zero element $\eta \in \soc(W)$ such that $(c+\delta)F(\eta)=0$. If $\m=(x_1,\ldots,x_n)$, since $\delta \in \m^{N} \subseteq \m^{[p]}$, we can write $\delta = r_1x_1^p + \cdots  + r_nx_n^p$ for some $r_1,\ldots,r_n \in R$. Thus
\[
0=(c+\delta)F(\eta) = cF(\eta) + \sum_{i=1}^n r_i x_i^pF(\eta) = cF(\eta) + \sum_{i=1}^n r_iF(x_i\eta) = cF(\eta),
\]
where $x_i \eta=0$ because $\eta$ is a socle element. This contradicts the assumption that $cF$ is injective and completes the proof.
\end{proof}

\begin{Theorem} \label{PerturbationFInjective SurjectiveElement}
Let $(R,\m)$ be a local ring of dimension $d$, and $x \in \m$ be a surjective element. Assume that $R/(x)$ is F-injective. There exists an integer $N>0$ such that $R/(x+\delta)$ is F-injective for any $\delta \in \m^N$.
\end{Theorem}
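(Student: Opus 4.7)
The strategy is to rewrite F-injectivity of $R/(x+\delta)$ as an injectivity statement for a single Frobenius-like endomorphism of the local cohomology of $R$ itself, and then to reduce the check to the socle, which does not depend on $\delta$.

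To begin, Lemma~\ref{Lemma sfr perturbs} furnishes an integer $N_1 > 0$ such that $y := x+\delta$ is again a surjective element---and in particular a non-zero divisor---for every $\delta \in \m^{N_1}$. For such $y$ the long exact sequence associated with $0 \to R \xrightarrow{y} R \to R/y \to 0$ collapses, by surjectivity of $y$ on each $H^i_\m(R)$, into isomorphisms $\partial \colon H^i_\m(R/y) \xrightarrow{\sim} (0 :_{H^{i+1}_\m(R)} y) =: K^{i+1}$ for every $i \geq 0$. A direct \v Cech-level calculation---if $\tilde z \in C^i(R)$ lifts a cocycle in $C^i(R/y)$ with $d\tilde z = yw$, then $d(\tilde z^p) = y^p F_R(w) = y \cdot (y^{p-1}F_R(w))$---shows that under $\partial$ the Frobenius on $H^i_\m(R/y)$ corresponds to $y^{p-1} F_R$ on $K^{i+1}$. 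Therefore F-injectivity of $R/y$ is equivalent to $y^{p-1} F_R$ being injective on each $K^{i+1}$.

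The next step is to reduce to the socle. Since $H^{i+1}_\m(R)$ is Artinian, so is $K^{i+1}$, and any nonzero submodule of an Artinian module meets its socle, so it suffices to check $y^{p-1} F_R$ is injective on $\soc(K^{i+1})$. Because $y \in \m$, this socle equals $S^{i+1} := (0 :_{H^{i+1}_\m(R)} \m)$, which is independent of $\delta$. Mimicking the argument of Lemma~\ref{LemmaInjective}, for any $\eta \in S^{i+1}$ we have $\m^{[p]} F_R(\eta) = F_R(\m \eta) = 0$; and if we choose $N_2$ with $\m^{N_2} \subseteq \m^{[p]}$, then the binomial expansion $y^{p-1} - x^{p-1} = \sum_{k=1}^{p-1} \binom{p-1}{k} x^{p-1-k}\delta^k$ shows that $y^{p-1} - x^{p-1} \in \m^{[p]}$ for every $\delta \in \m^{N_2}$. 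Hence $(y^{p-1} - x^{p-1}) F_R(\eta) = 0$, so $y^{p-1} F_R(\eta) = x^{p-1} F_R(\eta)$, and the assumption that $R/(x)$ is F-injective---which, via the same identification in the $\delta = 0$ case, says exactly that $x^{p-1} F_R$ is injective on $(0 :_{H^{i+1}_\m(R)} x) \supseteq S^{i+1}$---forces $\eta = 0$. Taking $N = \max(N_1, N_2)$ handles the finitely many relevant values of $i$ at once.

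The main obstacle I anticipate is the Frobenius compatibility in the translation step, namely establishing $\partial \circ F_{R/y} = y^{p-1} F_R \circ \partial$. The factor $y^{p-1}$ that appears there is precisely what makes the socle trick work, but extracting it cleanly requires a careful chase of \v Cech lifts and the observation that $F_R$ commutes with the \v Cech differential while interacting non-trivially with the multiplication-by-$y$ map in the short exact sequence. Once this compatibility is in hand, the independence of $\soc(K^{i+1})$ from $\delta$ is immediate, and everything else reduces to the short module-theoretic argument in the spirit of Lemma~\ref{LemmaInjective}.
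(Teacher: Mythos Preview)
Your proposal is correct and follows essentially the same approach as the paper: both use the surjectivity of $x$ (and of $x+\delta$, via Lemma~\ref{Lemma sfr perturbs}) to identify $H^i_\m(R/y)$ with $\ker(y \mid H^{i+1}_\m(R))$ and the Frobenius with $y^{p-1}F$, then reduce injectivity to the socle, which is independent of $\delta$ and where $(x+\delta)^{p-1}F$ agrees with $x^{p-1}F$ once $\delta \in \m^{[p]}$. The only cosmetic difference is that the paper first proves $x^{p-1}F_i$ is injective on all of $H^i_\m(R)$ (via the Ma--Quy socle argument) and then invokes Lemma~\ref{LemmaInjective}, whereas you restrict to the kernel submodule throughout; the underlying mechanism is identical.
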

\begin{proof}
Since $x$ is a surjective element, for all $i>0$ we have commutative diagrams:
\[
\xymatrix{ 
0 \ar[r] & H^{i-1}_\m(R/(x)) \ar[d]_{\ov{F_{i-1}}} \ar[r] ^-\psi& H^i_\m(R) \ar[d]^{x^{p-1}F_i} \ar[r]^-{\cdot x} & H^i_\m(R) \ar[d]^{F_i} \ar[r]& 0 \\
0 \ar[r] & H^{i-1}_\m(R/(x)) \ar[r]^-\psi & H^i_\m(R) \ar[r]^-{\cdot x} & H^i_\m(R) \ar[r]& 0
}
\] 
We prove that $x^{p-1}F_i$ is injective reproducing the argument in the proof of \cite[Proposition 3.7]{MaQuy}. By assumption we have that $\ov{F_{i-1}}$ is injective for every $i$. Assume that $x^{p-1}F_i$ is not injective, and pick $0 \ne \eta \in \ker(x^{p-1}F_i) \cap \soc(H^i_\m(R))$, so that $x\eta = 0$. By exactness, there exists $\gamma \in H^{i-1}\m(R/(x))$ such that $\psi(\gamma)=\eta$, and then $\psi(\ov{F_{i-1}}(\gamma)) = x^{p-1}F_i(\eta)=0$. Since both $\psi$ and $\ov{F_{i-1}}$ are injective, it follows that $\gamma=0$, and hence $\eta = \psi(\gamma) =0$. A contradiction. So $x^{p-1}F_i$ is injective. By Lemma \ref{LemmaInjective}, Lemma~\ref{Lemma sfr perturbs}, and \cite[Corollary 1.2]{HunekeTrivedi}, we can choose $N>0$ such that $(x+\delta)^{p-1}F_i$ is injective for all $i$, and $x+\delta$ is still a surjective element. Then for all $i$ we still have commutative diagrams
\[
\xymatrix{ 
0 \ar[r] & H^{i-1}_\m(R/(x+\delta)) \ar[d]_{\widetilde{F_{i-1}}} \ar[r] & H^i_\m(R) \ar[d]^{(x+\delta)^{p-1}F_i} \ar[r]^-{\cdot x+\delta} & H^i_\m(R) \ar[d]^{F_i} \ar[r]& 0 \\
0 \ar[r] & H^{i-1}_\m(R/(x+\delta)) \ar[r] & H^i_\m(R) \ar[r]^-{\cdot x+\delta} & H^i_\m(R) \ar[r]& 0
}
\] 
As the middle map is injective, we have that $\widetilde{F_{i-1}}$ is injective for every $i$, and thus $R/(x+\delta)$ is F-injective.
\end{proof}

As an immediate consequence, we obtain $\m$-adic stability of F-injectivity in the Cohen-Macaulay case.

\begin{Corollary} \label{PerturbationF-injectiveCM} Let $(R,\m)$ be a Cohen-Macaulay local ring of dimension $d$, and $x \in \m$ be a non-zero divisor such that $R/(x)$ is F-injective. There exists an integer $N>0$ such that $R/(x+\delta)$ is F-injective for all $\delta \in \m^N$. 
\end{Corollary}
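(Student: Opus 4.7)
The plan is to reduce this to Theorem \ref{PerturbationFInjective SurjectiveElement} by showing that, in a Cohen--Macaulay local ring, every non-zero divisor $x \in \m$ is automatically a surjective element in the sense of Definition \ref{Definition surjective element}. Once this reduction is made, the corollary is immediate.

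First, I would note that if $d = 0$ there are no non-zero divisors in $\m$, so we may assume $d \geq 1$. Since $R$ is Cohen--Macaulay of dimension $d$, one has $H^i_\m(R) = 0$ for all $i \neq d$, so the multiplication map $\cdot x \colon H^i_\m(R) \to H^i_\m(R)$ is trivially surjective whenever $i < d$.

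Next, because $x \in \m$ is a non-zero divisor, the short exact sequence
\[
0 \to R \xrightarrow{\,\cdot x\,} R \to R/(x) \to 0
\]
induces a long exact sequence of local cohomology containing the piece
\[
H^d_\m(R) \xrightarrow{\,\cdot x\,} H^d_\m(R) \to H^d_\m(R/(x)) \to 0.
\]
Since $R$ is Cohen--Macaulay and $x$ is a non-zero divisor, $R/(x)$ is Cohen--Macaulay of dimension $d-1$, so $H^d_\m(R/(x)) = 0$. Therefore multiplication by $x$ is surjective on $H^d_\m(R)$ as well, and $x$ is a surjective element.

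The conclusion now follows directly by applying Theorem \ref{PerturbationFInjective SurjectiveElement} to $x$. I do not expect any obstacle here; the only subtle point is verifying the surjective-element property, and that is handled by the two vanishing facts just mentioned.
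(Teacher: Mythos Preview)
Your proof is correct and follows essentially the same strategy as the paper: show that in a Cohen--Macaulay ring every non-zero divisor is a surjective element, then invoke Theorem~\ref{PerturbationFInjective SurjectiveElement}. The only difference is in how surjectivity on $H^d_\m(R)$ is verified: the paper cites the identification $\Att(H^d_\m(R)) = \Ass_R(R)$ from \cite[Theorem~7.3.2]{BrodmannSharp}, whereas you argue directly via the long exact sequence and the vanishing $H^d_\m(R/(x)) = 0$; both are standard and equally short.
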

\begin{proof}
Because $R$ is Cohen-Macaulay, the only non-vanishing local cohomology module is $H^d_\m(R)$. Moreover, it follows from \cite[Theorem 7.3.2]{BrodmannSharp} that $\Att(H^d_\m(R)) = \Ass_R(R)$. As $x$ is a non-zero divisor, it is then a surjective element, and we can apply Theorem \ref{PerturbationFInjective SurjectiveElement}.
\end{proof}

We recall that a ring $R$ is called F-pure if the Frobenius map is pure. If $R$ is F-finite, this is equivalent to assuming that the Frobenius map splits (see Section \ref{Section Counterexamples F-reg}).

The following is another immediate corollary of Theorem \ref{PerturbationFInjective SurjectiveElement}.

\begin{Corollary} Let $(R,\m)$ be a local ring and $x \in \m$ be a non-zero divisor such that $R/(x)$ is F-pure. There exists $N > 0$ such that $R/(x+\delta)$ is F-injective for all $\delta \in \m^N$.
\end{Corollary}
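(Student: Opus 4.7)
My plan is to derive the corollary from Theorem~\ref{PerturbationFInjective SurjectiveElement}. Since F-purity implies F-injectivity, the hypothesis guarantees that $R/(x)$ is F-injective, and so it will be enough to show that $x$ is a surjective element of $R$: the theorem will then directly produce the required $N$ by taking the maximum of the two perturbation bounds from Theorem~\ref{PerturbationFInjective SurjectiveElement} and Lemma~\ref{Lemma sfr perturbs}.

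The main step is therefore the following claim: whenever $x \in \m$ is a non-zero divisor and $R/(x)$ is F-pure, the element $x$ is a surjective element of $R$. Unwinding the definition, one needs to show that the connecting homomorphisms $H^i_\m(R/(x)) \to H^{i+1}_\m(R)$ arising from the short exact sequence $0 \to R \xrightarrow{\cdot x} R \to R/(x) \to 0$ are surjective for every $i$, or equivalently that the canonical maps $\alpha_i \colon H^i_\m(R) \to H^i_\m(R/(x))$ vanish. To establish this, I would exploit the Frobenius splitting $\varphi \colon F_*(R/(x)) \to R/(x)$ provided by F-purity: paired with the Frobenius compatibility of the long exact sequence and with the fact that $R/(x)$ is itself an $R$-module, the splitting forces the desired vanishing. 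This is the same mechanism that underlies Fedder-style deformation arguments for F-injectivity in the F-pure case, and it is the analogue, at the level of the ambient ring $R$, of the argument given for Corollary~\ref{PerturbationF-injectiveCM}.

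Once the claim is in hand, one feeds $x$ (now known to be a surjective element of $R$) together with the fact that $R/(x)$ is F-injective into Theorem~\ref{PerturbationFInjective SurjectiveElement} and reads off the integer $N > 0$ such that $R/(x+\delta)$ is F-injective for every $\delta \in \m^N$.

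The hard part will be the claim itself. The Frobenius splitting $\varphi$ lives on the quotient $R/(x)$, whereas the conclusion is a statement about the local cohomology of the ambient ring $R$ and, equivalently, about the attached primes $\Att(H^i_\m(R))$. Bridging this gap requires carefully intertwining the Frobenius action on $R$ with the splitting on $R/(x)$ across the long exact sequence, and in particular ruling out any attached prime of $H^i_\m(R)$ containing $x$ without having access to a direct Frobenius splitting on $R$ itself.
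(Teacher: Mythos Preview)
Your overall strategy is exactly that of the paper: verify that $x$ is a surjective element and that $R/(x)$ is F-injective, then invoke Theorem~\ref{PerturbationFInjective SurjectiveElement}. The paper does not reprove the key implication ``$R/(x)$ F-pure $\Rightarrow$ $x$ surjective'' but simply cites \cite[Proposition~3.5]{MaQuy}; your proposal attempts to supply this step directly.

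However, your reformulation of surjectivity contains an error, and your sketch of the argument is not yet a proof. From the long exact sequence
\[
\cdots \to H^i_\m(R) \xrightarrow{\cdot x} H^i_\m(R) \xrightarrow{\alpha_i} H^i_\m(R/(x)) \xrightarrow{\partial_i} H^{i+1}_\m(R) \xrightarrow{\cdot x} \cdots,
\]
surjectivity of $\cdot x$ on $H^i_\m(R)$ is equivalent to $\alpha_i = 0$ (your second condition), but this is \emph{not} equivalent to $\partial_i$ being surjective: the latter says instead that $x \cdot H^{i+1}_\m(R) = 0$. More importantly, the splitting $\varphi\colon F_*(R/(x)) \to R/(x)$ lives entirely over the quotient and gives no direct control over $\alpha_i$, which has source $H^i_\m(R)$. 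The argument in \cite{MaQuy} does not proceed via $\alpha_i$; it uses the factorization of the Frobenius on $R/(x)$ as $R/(x) \to F_*(R/(x^p)) \to F_*(R/(x))$, so that the splitting $\varphi$ forces the natural projection $H^i_\m(R/(x^p)) \to H^i_\m(R/(x))$ (and more generally $H^i_\m(R/(x^{n+1})) \to H^i_\m(R/(x^n))$) to be surjective. This is the original \cite{HMS} definition of surjective element, shown equivalent to yours in \cite[Proposition~3.3]{MaQuy}. Your sketch does not identify this mechanism, and as written it is a genuine gap.
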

\begin{proof}
Since $R/(x)$ is F-pure, $x$ is a surjective element \cite[Proposition 3.5]{MaQuy}, and the corollary now follows from Theorem \ref{PerturbationFInjective SurjectiveElement}.
\end{proof}

We now want to present a result on stability of F-injectivity for strictly filter regular elements. The techniques involved mimic those employed in \cite[Theorem 1.2]{MaQuy} for deformation of F-injectivity, and will eventually require that the residue field is perfect. We start with a useful lemma that holds in greater generality.
\begin{Lemma} \label{Lemma invariance secondary rep for deep perturbations} Let $(R,\m)$ be a local ring, and $x$ be an element of $\m$. Let $W$ be an Artinian $R$-module such that $x \notin \p$ for all $\p \in \Att(W) \smallsetminus \{\m\}$. 
Then there exists an integer $N>0$ such that $xW=(x+\delta)W$ for all $\delta \in \m^{N}$.
\end{Lemma}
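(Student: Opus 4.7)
The plan is to argue via a minimal secondary representation $W = W_1 + \cdots + W_t$, with $\sqrt{\Ann_R(W_i)} = \p_i$ and $\Att(W) = \{\p_1,\ldots,\p_t\}$. Recall that for every $r \in R$, multiplication by $r$ on $W_i$ is surjective when $r \notin \p_i$ and nilpotent when $r \in \p_i$. Since $xW = \sum_i xW_i$ and $(x+\delta)W = \sum_i (x+\delta)W_i$, it suffices to find, for each index $i$, an integer $N_i$ such that $(x+\delta)W_i = xW_i$ for all $\delta \in \m^{N_i}$, and then take $N = \max_i N_i$.

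First I would handle the indices $i$ with $\p_i \neq \m$. By hypothesis $x \notin \p_i$, so $xW_i = W_i$. To preserve this after perturbation, I would apply Krull's intersection theorem in the Noetherian local ring $R/\p_i$ (whose maximal ideal is the image of $\m$) to get $\bigcap_{N} (\m^N + \p_i) = \p_i$. Since $x \notin \p_i$, there must exist $N_i$ with $x \notin \m^{N_i} + \p_i$; hence for every $\delta \in \m^{N_i}$ we have $x+\delta \notin \p_i$, and consequently $(x+\delta)W_i = W_i = xW_i$.

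Next I would address the indices $i$ (if any) for which $\p_i = \m$. Since $\sqrt{\Ann_R(W_i)} = \m$ and $\m$ is finitely generated, some power $\m^{n_i}$ is contained in $\Ann_R(W_i)$, so $\m^{n_i}W_i = 0$. Setting $N_i = n_i$ yields $\delta W_i = 0$ for every $\delta \in \m^{N_i}$, which gives trivially $(x+\delta)W_i = xW_i$. Taking $N = \max_i N_i$ finishes the proof.

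The argument is very short; there is no serious obstacle, as all tools are standard (secondary representation, Krull's intersection theorem). The only subtlety is that the hypothesis allows $\m$ itself to occur in $\Att(W)$, and this case must be treated separately using the fact that summands attached to $\m$ are annihilated by a power of $\m$, so the perturbation $\delta$ acts as zero on them and no control on $x$ modulo $\m$ is required.
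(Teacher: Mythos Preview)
Your proof is correct and follows essentially the same approach as the paper: take a minimal secondary representation, treat the components attached to primes $\p_i \neq \m$ using surjectivity of multiplication by $x$ and $x+\delta$, and treat any $\m$-secondary component by choosing $N$ large enough that $\m^N$ annihilates it. The only difference is organizational: the paper groups all non-$\m$ components together into $W'$ and shows $xW \subseteq (x+\delta)W$ before reversing the roles, whereas you verify $xW_i = (x+\delta)W_i$ component by component and sum; your version is in fact slightly more direct.
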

\begin{proof}
Let $W = W_1 + \cdots + W_t$ be a minimal secondary representation of $W$, with $\Att(W_t) \subseteq \{\m\}$ (with this we do not exclude the possibility that $\m$ is not attached to $W$, in which case $W_t=0$ and $\Att(W_t) = $ \O). Let $N>0$ be such that $\m^{N} W_t=0$; such an integer exists, because $\ann_R(W_t)$ is either $\m$-primary or the whole ring. By increasing $N$, we may also assume that $x+\delta$ still does not belong to any attached prime of $W$, except possibly $\m$, using the same argument as in the proof of Lemma \ref{Lemma sfr perturbs}. If we let $W'=W_1+\ldots+W_{t-1}$, for any $\delta \in \m^N$ we then have $W'=(x+\delta)W'$. Moreover, we have $xW_t = (x+\delta)W_t$. It follows that 
\[
xW = xW' + xW_t = x(x+\delta)W' + (x+\delta)W_t \subseteq (x+\delta)W.
\]
Equality can then be obtained reversing the roles of $x$ and $x+\delta$.
\end{proof}

The following is the main result of this section. 
\begin{Theorem} \label{Theorem F-inj strictly filter} Let $(R,\m)$ be a local ring with perfect residue field. Assume that $x$ is a strictly filter regular element and that $R/(x)$ is F-injective. There exists an integer $N>0$ such that $R/(x+\delta)$ is F-injective for all $\delta \in \m^N$.
\end{Theorem}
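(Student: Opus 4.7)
The plan is to exploit the long exact sequence of local cohomology arising from $0 \to R \xrightarrow{\cdot x} R \to R/(x) \to 0$. Since $x$ is strictly filter regular (and a non-zero divisor), this decomposes for each cohomological degree $j$ into a short exact sequence
\[
0 \to M_j \to H^j_\m(R/(x)) \to N_{j+1} \to 0,
\]
where $M_j := H^j_\m(R)/xH^j_\m(R)$ and $N_{j+1} := (0 :_{H^{j+1}_\m(R)} x)$ are both of finite length. The Frobenius $\bar F$ on $H^j_\m(R/(x))$ restricts to $F$ on $M_j$ and corresponds to $x^{p-1}F$ on $N_{j+1}$ via the connecting homomorphism (the $x^{p-1}$ coming from the diagram lift of $F$ across $0\to R\xrightarrow{x}R\to R/(x)\to 0$). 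As a first step, combining Lemmas \ref{Lemma sfr perturbs} and \ref{Lemma invariance secondary rep for deep perturbations} applied to each nonzero $H^j_\m(R)$, I would choose an integer $N_0 \geq 1$ so that for every $\delta \in \m^{N_0}$ the element $y := x+\delta$ is still strictly filter regular and $yH^j_\m(R) = xH^j_\m(R)$ for every $j$. Hence $M_j^y = M_j$ with the same Frobenius, and the finite-length $x$-torsion $\Gamma^j := \Gamma_x(H^j_\m(R))$ coincides with $\Gamma_y(H^j_\m(R))$.

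Next, viewing $F$ as an $R$-linear morphism $W \to F_*W$ (the abelian group $W$ with $R$-structure restricted along Frobenius), the F-injectivity of $R/(x)$ combined with the snake lemma gives, for each $j$, that $F\colon M_j \to F_*M_j$ is injective and that the natural map
\[
\ker\bigl(x^{p-1}F|_{N_{j+1}}\bigr) \hookrightarrow \coker\bigl(F|_{M_j}\bigr)
\]
is injective. Here the perfect residue field hypothesis enters essentially: a composition-series computation gives $\length(F_*k) = [k : k^p] = 1$ for $k = R/\m$ perfect, whence $\length(F_*M_j) = \length(M_j)$, and the injective $R$-linear map $F\colon M_j \to F_*M_j$ between finite-length modules of equal length must be bijective. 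Thus $\coker(F|_{M_j}) = 0$, forcing $x^{p-1}F$ to be injective on $N_{j+1}$. A short induction along the $x$-torsion filtration $\ker(x) \subseteq \ker(x^2) \subseteq \cdots \subseteq \Gamma^{j+1}$, based on the identity $x^{p-1}F(x\eta) = x^p \cdot x^{p-1}F(\eta)$, then propagates this to injectivity of $x^{p-1}F$ on the whole finite-length module $\Gamma^{j+1}$.

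Finally, Lemma \ref{LemmaInjective} applied to the Artinian module $\Gamma^{j+1}$ with $c = x^{p-1}$ furnishes, for each relevant $j$, an integer $N_j$ such that $(x^{p-1}+\eta)F$ is injective on $\Gamma^{j+1}$ for every $\eta \in \m^{N_j}$. Choosing $N \geq N_0$ large enough to dominate every $N_j$ (only finitely many degrees $j$ matter, bounded by $\dim R$) and observing that $(x+\delta)^{p-1} - x^{p-1} \in (\delta)R \subseteq \m^N$, we obtain that $y^{p-1}F$ is injective on $\Gamma^{j+1} = \Gamma_y(H^{j+1}_\m(R))$, and in particular on the submodule $N_{j+1}^y := (0 :_{H^{j+1}_\m(R)} y)$. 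The analogous short exact sequence for $y$ then has $F$ injective on $M_j^y = M_j$ and $y^{p-1}F$ injective on $N_{j+1}^y$, which by a direct diagram chase forces $\bar F_y$ to be injective on $H^j_\m(R/(y))$ for every $j$, i.e., F-injectivity of $R/(y)$. The hard part will be the middle paragraph: the snake-lemma conclusion extracted from F-injectivity of $R/(x)$ is only a partial injectivity statement on $N_{j+1}$, and the perfect residue field hypothesis is precisely what upgrades it to full injectivity of $x^{p-1}F$ on the entire $x$-torsion $\Gamma^{j+1}$ before Lemma \ref{LemmaInjective} can be applied.
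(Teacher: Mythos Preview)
Your argument is correct and is essentially the paper's proof, just organized a bit differently: both routes establish that $x^{p-1}F$ is injective on every $H^i_\m(R)$ (the paper via a socle argument after showing $\bar F$ is injective on $H^{i-1}_\m(R/(x))/L_{i-1}$, you via an explicit induction up the filtration $\ker(x)\subseteq\ker(x^2)\subseteq\cdots$), then apply Lemma~\ref{LemmaInjective} to perturb, Lemma~\ref{Lemma sfr perturbs} to keep $x+\delta$ strictly filter regular, and Lemma~\ref{Lemma invariance secondary rep for deep perturbations} to identify $M_j^y=M_j$, before finishing with the short exact sequence for $y$.

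One point to correct: your module $\Gamma^j=\Gamma_x(H^j_\m(R))$ is \emph{not} of finite length. Local cohomology modules $H^j_\m(R)$ are already $\m$-torsion, so every element is killed by a power of any $x\in\m$; thus $\Gamma_x(H^j_\m(R))=H^j_\m(R)$, which is Artinian but rarely of finite length. This does not damage your proof: the induction along $\ker(x^n)$ still shows $x^{p-1}F$ is injective on each $\ker(x^n)$ and hence on their union $H^{j+1}_\m(R)$; Lemma~\ref{LemmaInjective} only needs the module to be Artinian; and the equality $\Gamma_x=\Gamma_y$ you invoke is then trivially true. In effect you have proved exactly what the paper proves, that $x^{p-1}F$ is injective on all of $H^{j+1}_\m(R)$, so the rest of your perturbation step goes through unchanged.
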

\begin{proof} The short exact sequence $0 \to R \stackrel{\cdot x}{\longrightarrow} R \to R/(x) \to 0$ induces a long exact sequence on local cohomology and the following commutative diagram:
\[
\xymatrix{
\ldots \ar[r] & H^{i-1}_\m(R/(x)) \ar[d]_-{\ov{F}} \ar[r] & H^i_\m(R) \ar[d]^-{x^{p-1}F} \ar[r]^-{\cdot x} & H^i_\m(R) \ar[d]^-{F} \ar[r] & H^i_\m(R/(x)) \ar[d]^-{\ov{F}} \ar[r] & H^{i+1}_\m(R) \ar[d]^-{x^{p-1}F} \ar[r] & \ldots \\ 
\ldots \ar[r] & H^{i-1}_\m(R/(x)) \ar[r] & H^i_\m(R) \ar[r]^-{\cdot x} & H^i_\m(R) \ar[r] & H^i_\m(R/(x)) \ar[r] & H^{i+1}_\m(R) \ar[r] & \ldots
}
\]
Thus, for all $i \in \Z$ we have exact sequences
\[
\xymatrix{
0 \ar[r] & L_{i-1} \ar[r] & H^i_\m(R) \ar[r]^-{\cdot x} & H^i_\m(R) \ar[r] & L_i \ar[r] & 0,
}
\]
where 
\[
L_i=\ker\left(H^i_\m(R/(x)) \to \ann_{H^{i+1}_\m(R)}(x)\right) = \coker(H^i_\m(R)\stackrel{\cdot x}{\longrightarrow} H^i_\m(R)) = H^i_\m(R)/xH^i_\m(R).
\]
Since $x$ is a strictly filter regular element, $L_i$ has finite length for all $i$. Moreover, since the Frobenius action on $H^i_\m(R/(x))$ is injective, so is the restriction to $L_i$. As $R/\m$ is perfect, it follows that the Frobenius action on $L_i$ is actually bijective, and therefore the induced map $\ov{F_i}\colon H^i_\m(R/(x))/L_i \to H^i_\m(R/(x))/L_i$ is again injective for every $i \in \Z$. We then have exact commutative diagrams
\[
\xymatrix{ 
0 \ar[r] & H^{i-1}_\m(R/(x))/L_{i-1} \ar[d]_{\ov{F_{i-1}}} \ar[r] & H^i_\m(R) \ar[d]^{x^{p-1}F_i} \ar[r]^-{\cdot x} & H^i_\m(R) \ar[d]^{F_i} \ar[r] & \ldots\\
0 \ar[r] & H^{i-1}_\m(R/(x))/L_{i-1} \ar[r] & H^i_\m(R) \ar[r]^-{\cdot x} & H^i_\m(R) \ar[r] & \ldots
}
\]
This implies that $x^{p-1}F_i$ is injective for every $i$. Using Lemma \ref{LemmaInjective}, Lemma~\ref{Lemma sfr perturbs} and \cite[Corollary 1.2]{HunekeTrivedi}, we can find an integer $N>0$ such that $x+\delta$ is a strictly filter regular element, and $(x+\delta)^{p-1}F_i$ is injective for all $\delta \in \m^N$ and all $i \in \Z$. By setting $L_i' = \coker(H^i_\m(R) \stackrel{\cdot x+\delta}{\longrightarrow} H^i_\m(R))$ we have that $L_i'$ has finite length, and we have commutative diagrams:
\[
\xymatrix{ 
0 \ar[r] & H^{i-1}_\m(R/(x+\delta))/L_{i-1}' \ar[d]_{\widetilde{F_{i-1}}} \ar[r] & H^i_\m(R) \ar[d]^{(x+\delta)^{p-1}F_i} \ar[r]^-{\cdot x+\delta} & H^i_\m(R) \ar[d]^{F_i} \ar[r]& \ldots \\
0 \ar[r] & H^{i-1}_\m(R/(x+\delta))/L_{i-1}' \ar[r] & H^i_\m(R) \ar[r]^-{\cdot x+\delta} & H^i_\m(R) \ar[r]& \ldots
}
\]
Since the middle map is injective, we conclude that $\widetilde{F_i}\colon H^i_\m(R/(x+\delta))/L_i' \to H^i_\m(R/(x+\delta))/L_i'$ is injective for all $i$. To prove that $R/(x+\delta)$ is F-injective, it suffices to show that the Frobenius action on $L_i'$ is injective for all $i \in \Z$. By Lemma \ref{Lemma invariance secondary rep for deep perturbations} applied to $W=H^i_\m(R)$, there exists an integer $N_i >0$ such that $x H^i_\m(R) = (x+\delta) H^i_\m(R)$ for all $\delta \in \m^{N_i}$. By increasing $N$, if needed, we may assume that $N\geq \max\{N_i \mid i=0,\ldots,d\}$. Then $L_i = H^i_\m(R)/xH^i_\m(R) = H^i_\m(R)/(x+\delta)H^i_\m(R) = L_i'$ for all $\delta \in \m^N$, and since Frobenius is injective on $L_i$, it is injective on $L_i'$ as well.
\end{proof}

\begin{Remark} 
One could ask whether $\m$-adic stability of F-injectivity holds for regular sequences, at least in the assumptions of Theorems \ref{Theorem F-inj strictly filter} and \ref{PerturbationFInjective SurjectiveElement} after appropriately generalizing the notions of surjective element and strictly filter regular element. Our techniques do not immediately show that, and we decided not to pursue this direction in this article.
\end{Remark}

We conclude this section by pointing out that, to the best of our knowledge, there is no example of a local ring $(R,\m)$ and a non-zero divisor $x \in \m$ such that $R/(x)$ is F-injective, and $x$ is not a surjective element. While this may just be due to the difficulty of producing good examples, it is natural to ask whether it might actually be true in general that F-injectivity of $R/(x)$ implies that $x$ is a surjective element. If this was indeed the case, stability and deformation of F-injectivity would both hold in full generality by Theorems~\ref{PerturbationFInjective SurjectiveElement} and \ref{THM P->D}.

\section{Stability of F-regularity and F-purity} \label{Section Counterexamples F-reg}

Let $(R,\m)$ be an F-finite local ring. 
An additive map $\varphi: R \to R$ is called a {\it $p^{-e}$-linear map} if it is additive and $\varphi(r^{p^e} s) = r\varphi(s)$ for all $r,s\in R$. More generally, a {\it Cartier map} is a $p^{-e}$-linear map, for some $e \in \N$.
\begin{Definition} \label{def F-sing}
Let $(R,\m)$ be a local ring of prime characteristic $p>0$, and let $F:R \to R$ denote the Frobenius endomorphism. $R$ is called {\it F-pure} if there exists a $p^{-1}$-linear map $\varphi$ such that $\varphi \circ F = \id_R$. It is called {\it strongly F-regular} if, for all $c \in R$ not in any minimal prime, there exists a $p^{-e}$-linear map $\varphi$ such that $\varphi \circ (cF^e) = \id_R$.
\end{Definition}

As observed in the introduction, $\m$-adic stability of strong F-regularity and F-purity do not hold in general, as a consequence of \cite{Fedder}, \cite{SinghNODeformationFREG}, and Theorem \ref{THM P->D}. Based on the family of examples given in \cite{SinghNODeformationFREG}, we construct an explicit ring where stability of these F-singularities does not hold.

\begin{Example} \cite[Theorem 1.1 with $p=3, m=5$ and $n=2$]{SinghNODeformationFREG} \label{Example Anurag} Let $K$ be a field of characteristic $3$, 
$S=K[a,b,c,d,t]_{\n}$, where $\n=(a,b,c,d,t)$, and $A=S/I$, where $I$ is the ideal generated by the size two minors of
\[
\begin{bmatrix}
a^2+t^5 & b & d \\
c & a^2 & b^2-d
\end{bmatrix}.
\]
Then $A/(t)$ is strongly F-regular, but $A$ is not F-pure.
\end{Example}

\begin{Theorem} \label{Counterexample FReg} Let $K$ be a field of characteristic $3$, $S=K[x,y,z,u,v,w]_\n$, where $\n=(x,y,z,u,v,w)$, and $R=S/I$, where $I$ is the ideal generated by the size two minors of the matrix
\[
\begin{bmatrix}
x^2+v^5 & y & u \\
z & x^2 & y^2-u
\end{bmatrix}.
\]
Then $R/(v)$ is strongly F-regular, but $R/(v-w^N)$ is not F-pure for any $N \geq 1$.

In particular, F-regularity and F-purity are not stable properties in $R$.
\end{Theorem}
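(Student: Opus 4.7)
The plan is to exhibit $R$ as a concrete instance of the general construction in the proof of Theorem~\ref{THM P->D}, applied to Singh's ring from Example~\ref{Example Anurag}. Since the variable $w$ does not appear in the defining equations of $R$, we have the identification
\[
R \;\cong\; A[w]_{(\mathfrak{n}_A,\, w)}, \qquad A \;=\; K[x,y,z,u,v]_{\mathfrak{n}_A}/I_A,
\]
where $A$ is the ring of Example~\ref{Example Anurag} after relabeling $t$ as $v$. In this picture, $v$ plays the role of the element $x$ and $w$ plays the role of the auxiliary variable $T$ appearing in the proof of Theorem~\ref{THM P->D}; the two claims of the theorem then correspond, respectively, to the fact that $B/(x)$ satisfies $\cP$ and $B/(x-T^N)$ fails $\cP$ for every $N\ge 1$ in that proof.

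For the first claim, setting $v=0$ in the presenting matrix of $R$ yields precisely the presenting matrix of $A/(v)$, so
\[
R/(v) \;\cong\; (A/(v))[w]_{(\mathfrak{m}_{A/(v)},\, w)}.
\]
Since $A/(v)$ is strongly F-regular by Example~\ref{Example Anurag}, and strong F-regularity satisfies condition (a) of Theorem~\ref{THM A} (see \cite{Hashimoto}), it follows that $R/(v)$ is strongly F-regular.

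For the second claim, substituting $v = w^N$ gives the isomorphism
\[
R/(v - w^N) \;\cong\; K[x,y,z,u,w]_{\mathfrak{n}}/J_N,
\]
where $J_N$ is the ideal of $2\times 2$ minors of
\[
\begin{bmatrix} x^2 + w^{5N} & y & u \\ z & x^2 & y^2 - u \end{bmatrix}.
\]
This is precisely a member of the family constructed in \cite{SinghNODeformationFREG}, with $p=3$, $n=2$, and the exponent $m=5N$, where $w$ now plays the role of the variable called $t$ in Example~\ref{Example Anurag}. Invoking Singh's theorem for this member yields that $R/(v-w^N)$ is not F-pure. The final assertion about stability is then immediate: since $-w^N \in \mathfrak{m}^N$ for every $N\ge 1$, while $R/(v)$ is strongly F-regular and each $R/(v + (-w^N))$ fails to be F-pure, neither F-purity nor strong F-regularity is $\mathfrak{m}$-adically stable in $R$.

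The main obstacle is verifying that Singh's non-F-purity argument goes through uniformly in the exponent $m$ rather than only for $m=5$. I expect this to follow with no essential change from his approach, since non-F-purity is detected via Fedder's criterion through a colon containment $J_N^{[3]} : J_N \subseteq \mathfrak{n}^{[3]}$ in which the monomial $w^{5N}$ is automatically absorbed into $\mathfrak{n}^{[3]}$ for every $N\ge 1$; if the cited statement does not encompass arbitrary $m$, one reruns the Fedder-criterion colon computation of \cite{SinghNODeformationFREG} with $t^5$ replaced by $w^{5N}$, which should be a routine verification.
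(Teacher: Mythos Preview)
Your treatment of the first claim is the same as the paper's and is correct.

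For the second claim you set up the Theorem~\ref{THM P->D} framework but then abandon its mechanism. The paper's proof (and the proof of Theorem~\ref{THM P->D}) does \emph{not} try to recognize $R/(v-w^N)$ as a Singh ring with a new exponent. Instead it introduces the subring $S_N=K[x,y,z,u,v,w^N]_{(x,y,z,u,v,w^N)}$, so that $T_N:=S_N/(I,v-w^N)\cong A$ on the nose (with $w^N\mapsto t$), and observes that $T_N\to R/(v-w^N)$ is faithfully flat. Since $A$ is not F-pure and F-purity descends along faithfully flat maps \cite[Proposition~5.13]{HochsterRoberts}, $R/(v-w^N)$ is not F-pure. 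This uses Singh's theorem only for the single parameter value $m=5$, exactly as cited in Example~\ref{Example Anurag}.

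Your route instead requires Singh's non-F-purity statement for every $m=5N$, which you do not verify. The Fedder-criterion heuristic you offer is not sound as stated: the observation that $w^{5N}\in\n^{[3]}$ says only that the \emph{generators} of $J_N$ agree modulo $\n^{[3]}$ with those of the ideal obtained by setting $w^{5N}=0$, but that limiting ideal defines the strongly F-regular ring $(A/(t))[w]$, for which Fedder's criterion gives the \emph{opposite} containment. So ``absorption of $w^{5N}$ into $\n^{[3]}$'' cannot by itself force $J_N^{[3]}:J_N\subseteq\n^{[3]}$. Moreover, Singh's argument does not proceed via Fedder's criterion but by exhibiting an element in the Frobenius closure of a parameter ideal that is not in the ideal (cf.\ \cite[Proposition~4.5]{SinghNODeformationFREG}); rerunning that computation with $t^{5}$ replaced by $t^{5N}$ is plausible but is a genuine verification you have not carried out. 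The faithfully flat descent argument avoids this entirely.
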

\begin{proof}
In the notation of Example \ref{Example Anurag}, we have that $R/(v) \cong \left(A/(t)[W]\right)_{(\m_{A/(t)},W)}$, where $W$ is an indeterminate on $A/(t)$ and $\m_{A/(t)}$ is the maximal ideal of $A/(t)$. Since adding a variable and then localizing does not affect strong F-regularity, $R/(v)$ is strongly F-regular.
For a given $N \geq 1$, we let $S_N=K[x,y,z,u,v,w^N]_{\n_N}$, where $\n_N = (x,y,z,u,v,w^N)$. Observe that $S_N \to S$ is a faithfully flat map. If we let $I_N = I + (v-w^N)$, $T_N=S_N/I_N$ and $R_N=S/I_N = R/(v-w^N)$, then $T_N \to  R_N$ is also faithfully flat. It is easy to see that $T_N \cong A$, and thus $T_N$ is not F-pure for any $N \geq 1$. By \cite[Proposition 5.13]{HochsterRoberts} we then conclude that $R_N$ is not F-pure either. In conclusion, $R/(v)$ is strongly F-regular, hence F-pure. On the other hand, $R_N = R/(v-w^N)$ is not F-pure for any integer $N \geq 1$. A fortiori, it is not strongly F-regular for any $N \geq 1$.
\end{proof}

Patakfalvi, Schwede, and Zhang have previously considered the behavior of strong F-regularity in a family and in
\cite[Corollary~4.19]{PSZ} proved a form of openness for proper maps. 
However, essentially the same example shows that strong F-regularity is not generally open in families,
and it follows that the F-signature is not lower semicontinuous in families either. This is another difference with the Hilbert--Kunz multiplicity (\cite{SmirnovFamily}). 

\begin{Example}\label{Example Family}
Consider the family $\overline{\F_3}[t] \to R[t]/(v - tw)$ where $R$ is the ring of Theorem \ref{Counterexample FReg} with $K=\ov{\F_3}$.
Observe that for $t \neq 0$ then the closed fiber is isomorphic to $A$ from Example~\ref{Example Anurag}, thus $t = 0$ is the only 
strongly F-regular fiber.  Note that all fibers are F-rational because F-rationality deforms and, thus, $A$ is F-rational. 
\end{Example}

This example also shows that the analogue of \cite[Theorem~7.5]{MaSchwede} does not hold for strong F-regularity.
In fact, if we let $D = \overline{\F_3}[t]$ and $Q = (x,y,z,u,v,w)$, then for every maximal ideal $\n_R = (t - r)$ of $D$, the ideal $Q_r = \n_r + Q \in \Spec(R)$ is prime but the set 
\[
\{r \in \overline{\F_3} \mid (R/\n_r)_{Q_r} \text{ is strongly F-regular}\} = \{0\}
\]
is not open.

Working on the same ring, we can also find an example of a proper (in fact, projective) morphism whose special fiber is strongly F-regular, but all other closed fibers are not. It is now known that F-rationality is open in families (\cite[Theorem~5.8]{Hashimoto01}, \cite[Theorem~5.13]{PSZ}), but only special cases were known for strong F-regularity 
(\cite[Corollaries~4.19,4.20]{PSZ}). 

\begin{Example}\label{Proper Family}
Consider the ring $R=\overline{\F_3}[t,s,v,w,x,y,z,u]/I$, with
\[
I=\left(I_2\begin{bmatrix} x^2+v^5 & y & u \\ z & x^2 & y^2-u \end{bmatrix}, v-tw\right).
\]
We observe that $R$ is graded with respect to the following degrees of the variables:
\[
\begin{cases}
\deg(t)=0 \\
\deg(s)=1 \\
\deg(v)=\deg(w)=2 \\
 \deg(x)=5 \\ \deg(y)=\deg(z)=10 \\
 \deg(u)=20
\end{cases}
\]
We let $X=\Proj(R)$, and we consider the natural morphism $f\colon X \to \Spec(\overline{\F_3}[t])$, which is projective. For $r \in \overline{\F_3}$, we let $X_r$ be the fiber over $Q_r=(t-r) \in \Spec(\overline{\F_3})$. We claim that $X_r$ is strongly F-regular if and only if $r=0$.
We start by showing that $X_0$ is strongly F-regular. Observe that
\[
X_0 = \Proj\left(\frac{\overline{\F_3}[s,x,y,z,u,v,w]}{\left(I_2\begin{bmatrix} x^2+v^5 & y & u \\ z & x^2 & y^2-u \end{bmatrix}, v \right)} \right) = \Proj \left(\frac{\overline{\F_3}[s,x,y,z,u,w]}{\left(I_2\begin{bmatrix} x^2 & y & u \\ z & x^2 & y^2-u \end{bmatrix}\right)} \right).
\]
We check that the ring we obtain on each standard affine chart is strongly F-regular. Since $R$ is not standard graded, the computation of the affine charts is not completely straightforward, therefore we show part of the argument.
\begin{itemize}
\item On the chart $U_s=\{s \ne 0\}$, we let
\[
X=\frac{x}{s^5}, \quad Y= \frac{y}{s^{10}}, \quad Z = \frac{z}{s^{10}}, \quad U=\frac{u}{s^{20}}, \quad W= \frac{w}{s^2}.
\]
Observe that $X,Y,Z,U,W$ all have degree zero. The affine chart $U_s$ is then equal to $\Spec(R_s)$, where
\[
R_s = \frac{\overline{\F_3}[X,Y,Z,U,W]}{\left(I_2\begin{bmatrix} X^2 & Y & U \\ Z & X^2 & Y^2-U \end{bmatrix}\right)}.
\]
Since $R_s$ is isomorphic to one of the rings $A/(t)$ of Example \ref{Example Anurag} with the addition of the variable $W$, it is strongly F-regular.
\item On the chart $U_x = \{x \ne 0\}$, we let $\alpha$ be a fifth root of $x$, i.e., $\alpha^5=x$. In this way, $\deg(\alpha)=1$. We let
\[
S=\frac{s}{\alpha}, \quad Y= \frac{y}{\alpha^{10}}, \quad Z = \frac{z}{\alpha^{10}}, \quad U=\frac{u}{\alpha^{20}}, \quad W= \frac{w}{\alpha^2}.
\]
Observe that $S,Y,Z,U, W$ all have degree zero. We then have that $U_x=\Spec(R_x)$, where $R_x$ is the ring of invariants of
\[
S_x=\frac{\overline{\F_3}[S,Y,Z,U,W]}{\left(I_2\begin{bmatrix} 1 & Y & U \\ Z & 1 & Y^2-U \end{bmatrix}\right)}
\]
under the action
\[
\begin{cases} S \mapsto \epsilon_5^{-1}S = \epsilon_5^4S \\
Y \mapsto \epsilon_5^{-10}Y = Y \\ 
Z \mapsto \epsilon_5^{-10}Z=Z \\
 U \mapsto \epsilon_5^{-20}U =U \\ W \mapsto \epsilon_5^{-2}W = \epsilon_5^3W
\end{cases}
\]
where $\epsilon_5$ is a primitive fifth root of $1$. In particular, since the characteristic $p=3$ does not divide $5$, we have that $R_x$ is a direct summand of $S_x$. Thus, it suffices to show that $S_x$ is strongly F-regular. To this end, observe that 
\[
\left(I_2\begin{bmatrix} 1 & Y & U \\ Z & 1 & Y^2-U\end{bmatrix}\right) = (YZ-1,U-Y^2+ZU,U+YU-Y^3).
\]
Multiplying the second equation by $Y$, and using that $YZ=1$ in the quotient, shows that the third equation is redundant. With similar considerations, one can show that the defining ideal of $S_x$ becomes equal to $(UZ^2+UZ^3-1,Y-UZ-UZ^2)$, and thus
\[
S_x = \frac{\overline{\F_3}[S,Y,Z,U,W]}{(UZ^2+UZ^3-1,Y-UZ-UZ^2)} \cong \frac{\overline{\F_3}[S,Z,U,W]}{(UZ^2+UZ^3-1)},
\]
which can be checked to be regular, using for instance the Jacobian criterion. Since regular rings are strongly F-regular, this shows that $U_x$ is strongly F-regular.
\item With computations analogous to those done above for $U_x$ one can check that the affine chart $U_y = \{y \ne 0\}$ is $\Spec(R_y)$, where $R_y$ is a direct summand of the regular ring
\[
S_y = \frac{\overline{\F_3}[S,X,Z,U,W]}{(Z-X^4,1-U-X^2U)} \cong \frac{\overline{\F_3}[S,X,U,W]}{(1-U-X^2U)}.
\]
\item The affine chart $U_z = \{z \ne 0\}$ is $\Spec(R_z)$, where $R_z$ is a direct summand of the regular ring
\[
S_y = \frac{\overline{\F_3}[S,X,Y,U,W]}{(Y-X^4, U+X^2U-X^{10})} \cong \frac{\overline{\F_3}[S,X,U,W]}{(U+X^2U-X^{10})}.
\]
\item The affine chart $U_u = \{u \ne 0\}$ is $\Spec(R_u)$, where $R_u$ is a direct summand of the regular ring
\[
S_u = \frac{\overline{\F_3}[S,X,Y,Z,W]}{(Z+X^2-X^2Y^2,Y+X^2-Y^3)} \cong \frac{\overline{\F_3}[S,X,Y,W]}{(Y+X^2-Y^3)}
\]
\item Finally, the affine chart $U_w = \{w \ne 0\}$ is $\Spec(R_w)$, where $R_w$ is a direct summand of
\[
S_w = \frac{\overline{\F_3}[S,X,Y,Z,U]}{\left(I_2\begin{bmatrix} X^2 & Y & U \\ Z & X^2 & Y^2-U \end{bmatrix}\right)}.
\]
As in the case of $U_s$, we have that $S_w$ is strongly F-regular.
\end{itemize}
To conclude the proof, we need to show that 
\[
X_r = \Proj\left( \frac{\overline{\F_3}[s,x,y,z,u,v,w]}{\left(I_2\begin{bmatrix} x^2+v^5 & y & u \\ z & x^2 & y^2-u\end{bmatrix},v-rw\right)} \right) = \Proj\left( \frac{\overline{\F_3}[x,y,z,u,w]}{\left(I_2\begin{bmatrix} x^2+r^5w^5 & y & u \\ z & x^2 & y^2-u\end{bmatrix}\right)}\right)
\]
is not strongly F-regular for any $r \ne 0$. We consider the affine chart $U_s = \{s \ne 0\}$. Since $\deg(s)=1$, this is equal to $\Spec(R_s)$, where 
\[
R_s = \frac{\overline{\F_3}[X,Y,Z,U,W]}{\left(\begin{bmatrix} X^2+(rW)^5 & Y & U \\ Z & X^2 & Y^2-U \end{bmatrix}\right)}.
\]
Observe that $R_s$ is isomorphic to the ring $A$ of Example \ref{Example Anurag}, hence it is not strongly F-regular.
\end{Example}

\begin{Remark}
Essentially the same example works for mixed characteristic: 
we may set $R=\mathbb Z[s,v,w,x,y,z,u]/I$, with
\[
I=\left(I_2\begin{bmatrix} x^2+v^5 & y & u \\ z & x^2 & y^2-u \end{bmatrix}, v-3w\right).
\]
Then the natural morphism $f\colon X \to \Spec(\mathbb Z)$ gives a family which is strongly F-regular over the prime $(3)$, but not strongly F-regular over any prime larger than $3$.
\end{Remark}

Such examples shows the necessity of a $\Q$-Gorenstein assumption: good behavior is known for F-rational singularities (\cite[Theorem~7.2]{MaSchwede}) 
and strongly F-regular $\Q$-Gorenstein singularities (\cite[Theorem~7.9]{MaSchwede} and \cite[Proposition~9.3]{MSTWW} with milder assumptions). We will return to this later in this section.

The ring of Theorem \ref{Counterexample FReg} gives also an example of a Cohen-Macaulay local domain for which the F-signature is not continuous in the $\m$-adic topology. In fact, neither the F-splitting ratio nor the splitting dimension in the example are continuous. We recall the definitions first.

\begin{Definition}
Let $(R,\m,k)$ be an F-finite $d$-dimensional local ring of characteristic $p>0$. Let $\alpha(R) = \log_p[k:k^p]$. For every integer $e$, write $F^e_*R = R^{\oplus a_e} \bigoplus M_e$, where $M_e$ has no free summands. 
\begin{itemize}
\item The {\it F-signature} of $R$ is $\s(R) = \ds \lim_{e \to \infty} \frac{a_e}{p^{e(\dim(R)+\alpha(R))}}$

\item The {\it splitting dimension} of $R$ is defined as 
\[
\sdim(R) = \left\{\begin{array}{ll} -1 & \text{ if } $R$ \text{ is not F-pure} \\ \\
\max\left\{s \in \Z \ \bigg| \ \lim_{e \to \infty} \frac{a_e}{p^{e(s+\alpha(R))}}>0\right\} & \text{ otherwise}
\end{array} \right.
\]

\item The {\it F-splitting ratio} of $R$ is defined as $r_F(R) = \ds \lim_{e \to \infty} \frac{a_e}{p^{e(\sdim(R)+\alpha(R))}}$.
\end{itemize}
\end{Definition}
\begin{Corollary} \label{CorollaryNOContinuityFSig} Let $(R,\m)$ be an F-finite local ring. The functions $f \in R \mapsto \sdim(R/(f))$, $f \in R \mapsto r_F(R/(f))$ and $f \in R \mapsto \s(R/(f))$ are not continuous in the $\m$-adic topology.
\end{Corollary}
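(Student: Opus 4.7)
The plan is to deduce the corollary directly from Theorem~\ref{Counterexample FReg} by reading off the values of the three invariants on $R/(v)$ and on $R/(v-w^N)$. Take $R$ to be the ring of Theorem~\ref{Counterexample FReg} with $K$ an F-finite field of characteristic $3$ (for instance $K=\ov{\F_3}$), and set $f_N := v - w^N$ for $N \geq 1$. Since $f_N - v = -w^N \in \m^N$, the sequence $\{f_N\}$ converges to $v$ in the $\m$-adic topology. By Theorem~\ref{Counterexample FReg}, $R/(v)$ is strongly F-regular, while $R/(f_N)$ is not F-pure for any $N \geq 1$.

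Next I would observe that for any F-finite local ring $S$ that is not F-pure, $F^e_*S$ admits no free $S$-summand for any $e \geq 1$, so $a_e = 0$ for every $e \geq 1$. Read directly off the definitions, this forces $\s(S)=0$, $\sdim(S)=-1$, and $r_F(S)=0$. Applying this to each $S = R/(f_N)$, the sequences $\s(R/(f_N))$, $\sdim(R/(f_N))$, $r_F(R/(f_N))$ are identically $0$, $-1$, $0$ respectively.

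On the other hand, $R/(v)$ being strongly F-regular yields $\s(R/(v))>0$ by \cite{AberbachLeuschke}, and by standard properties of the splitting dimension and F-splitting ratio one has $\sdim(R/(v)) = \dim(R/(v)) \geq 1$ and $r_F(R/(v)) = \s(R/(v)) > 0$. Consequently $\s(R/(f_N)) \equiv 0$ cannot converge to $\s(R/(v)) > 0$, the sequence $\sdim(R/(f_N)) \equiv -1$ cannot converge to $\sdim(R/(v)) \geq 1$, and $r_F(R/(f_N)) \equiv 0$ cannot converge to $r_F(R/(v)) > 0$; hence all three functions fail to be $\m$-adically continuous at $v$.

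There is essentially no substantive obstacle: the work is concentrated in Theorem~\ref{Counterexample FReg}, and the discontinuity is as sharp as possible because the three invariants themselves detect the F-pure/non-F-pure dichotomy, while strong F-regularity forces them all to be strictly positive (respectively equal to $\dim$). The only bookkeeping is the verification that $\dim(R/(v)) \geq 1$, which is immediate from the explicit presentation of $R$.
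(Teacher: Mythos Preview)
Your proof is correct and follows essentially the same approach as the paper: both exhibit the explicit ring $R$ of Theorem~\ref{Counterexample FReg}, use that $R/(v)$ is strongly F-regular (hence $\s>0$, $\sdim=\dim$, $r_F>0$) while $R/(v-w^N)$ is not F-pure (hence $a_e=0$ for all $e$, giving $\s=r_F=0$ and $\sdim=-1$), and conclude discontinuity at $v$. The only cosmetic difference is that the paper records $\dim(R/(v))=3$ explicitly.
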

\begin{proof}
We use the notation of Theorem \ref{Counterexample FReg}. By localizing $R$ at the irrelevant maximal ideal, we have produced an example of a ring such that $\sdim(R/(v)) = \dim(R/(v))=3$, but $\sdim(R/(v-w^N))=-1$ for all $N \geq 1$. Moreover, we have that $\s(R/(v)) = r_F(R/(v)) >0$, but $\s(R/(v-w^N)) = r_F(R/(v-w^N)) =0$ for all $N \geq 1$.
\end{proof}

In \cite{PolstraSmirnov}, Polstra and the second author prove that the F-signature function is continuous for Gorenstein rings. This follows also from the results of this paper, since for a Gorenstein local ring F-signature and F-rational signature are equal \cite[Lemma 4.3]{HochsterYao}. Note that strong F-regularity and F-rationality are also equivalent for a Gorenstein local ring. As Corollary \ref{CorollaryNOContinuityFSig} makes evident, the F-signature behaves very differently from the Hilbert-Kunz multiplicity in this context. In fact, under mild assumptions the Hilbert--Kunz multiplicity is $\m$-adically continuous for a Cohen-Macaulay local ring \cite[Theorem 1.3]{PolstraSmirnov}.

\subsection{$\Q$-Gorenstein rings}

We recall that a Cohen-Macaulay normal local domain $(R,\m)$ with a canonical ideal $J$ is called {\it $\Q$-Gorenstein} if there exists an integer $n >0$ such that the symbolic power $J^{(n)}$ is principal. Equivalently, $J$ is a torsion element when seen as an element in the class group of $R$.

By \cite{AKM}, if $R$ is $\Q$-Gorenstein and $R/xR$ is weakly F-regular, then $R$ is weakly F-regular.
It is also known that $\Q$-Gorenstein weakly F-regular rings are strongly F-regular (\cite[Theorem~3.3.2]{MacCrimmonThesis}, see also \cite[(2.2.4)]{AberbachWeakAndStrong}). These facts imply that strong F-regularity deforms if $R$ is $\Q$-Gorenstein. In this subsection, we prove an analogous statement for $\m$-adic stability.

\begin{Lemma}\label{Lemma symbolic inclusion}
Let $(R,\m)$ be a Cohen-Macaulay local ring of dimension $c+d$, and $\ul{x} = x_1,\ldots,x_c$ be a regular sequence such that $\ov{R} = R/(\ul{x})$ is a normal domain.
Let $J \subseteq R$ be a canonical ideal of $R$ such that $J\ov{R}$ is a canonical ideal of $\ov{R}$.
Then $J^{(n)}\ov{R} \subseteq (J\ov{R})^{(n)}$ for all $n \in \N$.
\end{Lemma}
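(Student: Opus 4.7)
The plan is to check the containment one minimal prime at a time, reducing to a local situation that turns out to be Gorenstein. Fix $f \in J^{(n)}$ and a minimal prime $\ov{\q}$ of $J\ov{R}$, and let $\q \subseteq R$ be the preimage of $\ov{\q}$. Because $J\ov{R}$ is divisorial in the normal domain $\ov{R}$, it is unmixed of height one, and therefore
\[
(J\ov{R})^{(n)} = \bigcap_{\ov{\q} \in \Min(J\ov{R})} \bigl((J\ov{R})^n \ov{R}_{\ov{\q}} \cap \ov{R}\bigr).
\]
Hence it suffices to show that the image of $\ov{f}$ in $\ov{R}_{\ov{\q}}$ lies in $(J\ov{R})^n \ov{R}_{\ov{\q}}$. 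After localizing at $\q$, the ring $R_\q$ is Cohen-Macaulay of dimension $c+1$, $\ul{x}$ is still a regular sequence, and $\ov{R}_{\ov{\q}} = R_\q/(\ul{x})R_\q$ is a one-dimensional Cohen-Macaulay normal local ring, hence a DVR.

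The key step is to show that $R_\q$ is itself Gorenstein. Since canonical modules commute with quotients by regular sequences in the Cohen-Macaulay setting, $\omega_{R_\q}/(\ul{x})\omega_{R_\q} \cong \omega_{\ov{R}_{\ov{\q}}}$. As $\ov{R}_{\ov{\q}}$ is a DVR it is Gorenstein, so $\omega_{\ov{R}_{\ov{\q}}}$ is cyclic; by Nakayama's lemma, the finitely generated module $\omega_{R_\q}$ is also cyclic. On the other hand, $\omega_{R_\q} \cong JR_\q$ since canonical ideals localize, and this ideal contains a non-zero divisor because the height-one ideal $J$ in the Cohen-Macaulay ring $R$ contains a regular element. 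A faithful cyclic module is isomorphic to the ring itself, so $\omega_{R_\q} \cong R_\q$; thus $R_\q$ is Gorenstein and $JR_\q = (j)$ for some non-zero divisor $j \in R_\q$.

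To finish, I would observe that $j^n$ is also a non-zero divisor in the Cohen-Macaulay ring $R_\q$, so $(j^n)$ is unmixed of height one; in particular $(j)^{(n)} = (j^n)$. For every minimal prime $\p \in \Min(J)$ with $\p \subseteq \q$, the assumption $f \in J^{(n)}$ yields $f \in J^n R_\p = (j^n)R_\p$, and since the primes $\p R_\q$ with $\p$ running over such minimal primes are exactly the minimal primes of $JR_\q$, this gives $f \in (j)^{(n)}R_\q = (j^n)R_\q$. Writing $f = j^n h$ in $R_\q$ and reducing modulo $(\ul{x})$ yields $\ov{f} = \ov{j}^n \ov{h} \in (J\ov{R})^n \ov{R}_{\ov{\q}}$, which is what was needed.

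The main potential obstacle is the middle step, where one needs to leverage the assumption that $J\ov{R}$ is a canonical ideal of the special fiber to obtain Gorensteinness of the localization of the ambient ring; once that is in hand, principality of the canonical ideal reduces everything to elementary manipulation of powers of a single non-zero divisor.
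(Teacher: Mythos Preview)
Your proof is correct and rests on the same key observation as the paper: for a prime $\q$ of height $c+1$ minimal over $(J,\ul{x})$, the localization $R_\q$ is Gorenstein, since $\ov{R}_{\ov{\q}}$ is a one-dimensional normal local ring (a DVR). The two arguments differ only in how this fact is deployed. The paper works globally: it uses Gorensteinness of $R_P$ to show that no height-$(c+1)$ prime minimal over $(J,\ul{x})$ can be an embedded prime of $J^n$, then chooses by prime avoidance an element $f$ in all embedded components of $J^n$, outside the minimal primes of $J$ and of $(J,\ul{x})$, so that $J^{(n)}=(J^n:_R f)$ and $\ov f$ is still outside the minimal primes of $J\ov R$; reducing the colon modulo $(\ul{x})$ then gives the inclusion at once. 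You instead verify the containment locally, one minimal prime $\ov{\q}$ of $J\ov R$ at a time, using that $JR_\q=(j)$ is principal to get $(J^{(n)})R_\q=(jR_\q)^{(n)}=j^nR_\q$ and then passing to $\ov R_{\ov\q}$. Your approach is slightly more direct and avoids the prime-avoidance choice of $f$; the paper's approach has the virtue of producing a single colon identity valid for all $n$. A minor remark: your step from $f\in J^{(n)}$ to $f\in (j^n)R_\q$ can be streamlined by noting that symbolic powers localize, i.e.\ $(J^{(n)})_\q=(JR_\q)^{(n)}$, so no separate argument over the minimal primes $\p\subseteq\q$ is needed.
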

\begin{proof}
The claim is trivial if $J^{(n)} = J^n$. Assume henceforth that this is not the case, that is, $J^n$ has embedded components. First, we prove that no associated prime of $J^n$ of height $c+1$ can be minimal over $(J,\ul{x})$. In fact, let $P \in \Spec(R)$ be a prime, with $\height(P)=c+1$, and assume that $P$ is minimal over $(J,x)$. Since $R/(\ul{x})$ is normal, $\left(R/(\ul{x})\right)_P$ is regular, hence Gorenstein. It follows that $R_P$ is Gorenstein, and thus $J_P$ is a principal ideal of $R_P$. Therefore $(J^n)_P = (J_P)^n$ is an ideal of height one with no embedded components, and thus $P$ is not an associated prime of $J^n$. Now let $f \in R$ be an element that avoids all minimal primes of $J$, and belongs to the embedded components of $J^n$. Because of what we have shown above, by prime avoidance we may also choose $f$ so that it does not belong to any minimal prime of $(J,x)$. We then have $J^{(n)} = (J^n:_R f)$. Taking residue classes modulo $(\ul{x})$, we obtain that $J^{(n)}\ov{R} = (J^n\ov{R}:_{\ov{R}} f) \subseteq (J\ov{R})^{(n)}$, where the last containment follows from the fact that the class of $f$ modulo $(\ul{x})$ does not belong to the minimal primes of $J\ov{R}$, by our choice of $f$.
\end{proof}

The following is a modification of \cite[Lemma 6.7 (i.)]{PolstraTucker}.

\begin{Corollary} \label{Cor PT} Let $(R,\m)$ be an F-finite Cohen-Macaulay local ring of dimension $c+d$ and characteristic $p > 0$. Let $\ul{x} = x_1,\ldots,x_c$ be a regular sequence such that $\ov{R} = R/(\ul{x})$ is a normal domain and let $J \subseteq R$ be an ideal of height $c+1$ such that $J\ov{R}$ is the canonical module of $\ov{R}$. Let $y_1 \in J$ and $\ul{y} = y_1,y_2,\ldots,y_d \in R$ be such that $\ul{x},\ul{y}$ forms a regular sequence. If $y_2 J \ov{R} \subseteq a_2\ov{R}$ for some $a_2 \in J$, then
for all $t \geq 2$ and all $e \geq 0$
\begin{align*}
& (J^{(p^e)},\ul{x},y_2^{tp^e},y_3^{tp^e},\ldots,y_d^{tp^e}): y_2^{(t-1)p^e} \\
= &(J^{[p^e]},\ul{x},y_2^{tp^e},y_3^{tp^e},\ldots,y_d^{tp^e}): y_2^{(t-1)p^e}\\
= & (J^{[p^e]},\ul{x},y_2^{2p^e},y_3^{tp^e},\ldots,y_d^{tp^e}): y_2^{p^e}
\end{align*}
\end{Corollary}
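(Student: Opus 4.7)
The plan is to mirror the strategy of \cite[Lemma~6.7(i)]{PolstraTucker}, adapting it to the present relative setting by first reducing modulo the regular sequence $\ul{x}$ using Lemma~\ref{Lemma symbolic inclusion}. The argument rests on two technical inputs; once they are in place, the two equalities follow from elementary colon manipulations.

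The first ingredient is a \emph{symbolic-to-Frobenius} inclusion in $\ov{R} := R/(\ul{x})$, namely $y_2^{p^e}\, J^{(p^e)}\ov{R} \subseteq J^{[p^e]}\ov{R}$. Lemma~\ref{Lemma symbolic inclusion} reduces this to $y_2^{p^e}(J\ov{R})^{(p^e)} \subseteq a_2^{p^e}\ov{R}$, since $a_2 \in J$ forces $a_2^{p^e}\ov{R} \subseteq J^{[p^e]}\ov{R}$. The hypothesis $y_2 J\ov{R} \subseteq a_2\ov{R}$ together with $a_2 \in J\ov{R}$ forces $J\ov{R}_{y_2} = (a_2/y_2)\ov{R}_{y_2}$, so $J\ov{R}$ is locally principal away from $V(y_2)$, and in particular $(J\ov{R})^{(p^e)}$ agrees with $(J\ov{R})^{p^e}$ in $\ov{R}_{y_2}$. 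Checking the containment prime-by-prime at every height-one $P$ of $\ov{R}$ (a DVR by normality) reduces to the valuation inequality $v_P(y_2) + p^e v_P(J\ov{R}) \geq p^e v_P(a_2)$, which follows directly from $y_2 J\ov{R} \subseteq a_2\ov{R}$; the global inclusion then follows because both sides are rank-one reflexive submodules of $\ov{R}$.

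The second ingredient is that $y_2$ is a non-zero-divisor on $A/J^{[p^e]}A$, where $A := R/(\ul{x}, y_3^{tp^e},\ldots, y_d^{tp^e})$. The Cohen--Macaulay hypothesis makes $A$ a two-dimensional Cohen--Macaulay local ring on which $y_1,y_2$ is a system of parameters, hence a regular sequence. Any minimal associated prime of $A/J^{[p^e]}A$ has height one in $A$ and contains $y_1 \in J$; such a prime cannot contain $y_2$, for otherwise it would contain the height-two ideal $(y_1,y_2)$. The absence of embedded associated primes reduces, thanks to the symbolic-to-Frobenius inclusion above (which shows the support of $J^{(p^e)}/J^{[p^e]}$ in $\ov{R}$ is contained in $V(y_2)$), to a depth computation that uses the Cohen--Macaulayness of $\ov{R}/J\ov{R}$ (the latter following from the fact that $J\ov{R}$ is the canonical module of $\ov{R}$, hence maximal Cohen--Macaulay).

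With these two ingredients in place, the equalities follow immediately. The inclusion $\supseteq$ in the first equality is trivial from $J^{[p^e]} \subseteq J^{(p^e)}$. Conversely, given $r$ with $r y_2^{(t-1)p^e} \equiv j + y_2^{tp^e} s$ in $A$ and $j \in J^{(p^e)}A$, multiplying by $y_2^{p^e}$ and cancelling by the non-zero-divisor $y_2$ in $A$ yields $j = y_2^{(t-1)p^e} u$; the symbolic-to-Frobenius inclusion then puts $y_2^{tp^e} u = y_2^{p^e} j$ into $J^{[p^e]}A$, and cancelling $y_2^{tp^e}$ modulo $J^{[p^e]}A$ (using that $y_2$ is a non-zero-divisor on $A/J^{[p^e]}A$) forces $u \in J^{[p^e]}A$, whence $j \in J^{[p^e]}A$. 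For the second equality, passing to $B := A/J^{[p^e]}A$ one computes both sides to be $(y_2^{tp^e})_B : y_2^{(t-1)p^e}$ and $(y_2^{2p^e})_B : y_2^{p^e}$ respectively, each of which collapses to $(y_2^{p^e})_B$ because $y_2$ is a non-zero-divisor on $B$. The main obstacle I foresee is the rigorous verification that $A/J^{[p^e]}A$ has no embedded associated primes, together with the global form of the symbolic-to-Frobenius inclusion; both rely on a careful analysis of the reflexive structure of $J\ov{R}$ encoded in the hypothesis $y_2 J\ov{R} \subseteq a_2\ov{R}$.
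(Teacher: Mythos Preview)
Your proposal reconstructs from scratch the content of \cite[Lemma~6.7(i)]{PolstraTucker}, whereas the paper simply invokes that lemma as a black box in $\ov{R}$ and only supplies the single new ingredient needed to pass from $R$ to $\ov{R}$. Concretely, the paper applies \cite[Lemma~6.7]{PolstraTucker} directly in the normal Cohen--Macaulay ring $\ov{R}$ (where $J\ov{R}$ is the canonical ideal, $y_1\in J$, and $y_2 J\ov{R}\subseteq a_2\ov{R}$) to obtain
\[
\bigl((J\ov{R})^{(p^e)},y_2^{tp^e},\ldots,y_d^{tp^e}\bigr):_{\ov{R}} y_2^{(t-1)p^e}
= \bigl(J^{[p^e]}\ov{R},y_2^{2p^e},\ldots,y_d^{tp^e}\bigr):_{\ov{R}} y_2^{p^e},
\]
and then uses Lemma~\ref{Lemma symbolic inclusion} (the inclusion $J^{(p^e)}\ov{R}\subseteq (J\ov{R})^{(p^e)}$) to conclude that the first displayed ideal of the corollary is contained in the third. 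The reverse chain of inclusions (third $\subseteq$ second $\subseteq$ first) is immediate from $J^{[p^e]}\subseteq J^{(p^e)}$ and an obvious multiplication by $y_2^{(t-2)p^e}$. That is the entire proof.

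Your outline is on the right track and the final colon manipulations are correct once your two ingredients are granted, but you are redoing the internal work of \cite{PolstraTucker} rather than citing it. Your first ingredient (the valuation/reflexivity argument for $y_2^{p^e}(J\ov{R})^{(p^e)}\subseteq a_2^{p^e}\ov{R}$) is fine. The second ingredient---that $y_2$ is a non-zerodivisor on $A/J^{[p^e]}A$, equivalently that $J^{[p^e]}A$ is unmixed---is, as you yourself flag, not fully justified: your sketch (``reduces to a depth computation'') does not actually establish the absence of embedded primes for the \emph{Frobenius power} $J^{[p^e]}$. This is precisely the technical content handled inside \cite[Lemma~6.7]{PolstraTucker}, so by citing that lemma the paper sidesteps the issue entirely. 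If you insist on a self-contained argument, you would need to supply that unmixedness verification; otherwise, simply invoke the lemma in $\ov{R}$ as the paper does and reserve Lemma~\ref{Lemma symbolic inclusion} for the one step where the ambient ring $R$ genuinely enters.
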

\begin{proof} 
Using \cite[Lemma 6.7]{PolstraTucker} in $\ov{R}$, we get that 
\[
((J\ov{R})^{(p^e)},y_2^{tp^e},y_3^{tp^e},\ldots,y_d^{tp^e}):_{\ov{R}} y_2^{(t-1)p^e}
= (J^{[p^e]},y_2^{2p^e},y_3^{tp^e},\ldots,y_d^{tp^e})\ov{R} :_{\ov{R}} y_2^{p^e}.
\]
By Lemma~\ref{Lemma symbolic inclusion}, we then get that the first term of the assertion 
is contained in the last, but the opposite inclusion is clear. 
\end{proof}

\begin{Theorem} \label{Theorem Q-Gorenstein}
Let $(R, \m)$ be an F-finite local ring of characteristic $p > 0$ that is $\Q$-Gorenstein on the punctured spectrum of $R$. 
Let $\ul{x}$ be a part of a system of parameters and suppose that $R/\ul{x}R$ is strongly F-regular. 
Then for any $\varepsilon > 0$ there exists an integer $N > 0$
such that for all $\ul{\delta} \in (\m^N)^{\oplus c}$ 
\[
\left| \s(R/(\ul{x}+\ul{\delta})) - \s(R/(\ul{x})) \right| < \varepsilon.
\]

In particular,  there exists an integer $N > 0$
such that for all $\ul{\delta} \in (\m^N)^{\oplus c}$ the ring $R/(\ul{x}+\ul{\delta})$ is strongly F-regular.
\end{Theorem}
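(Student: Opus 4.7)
The plan is to express $\s(\ov{R})$, where $\ov{R} := R/(\ul{x})$, as a limit of Hilbert--Kunz multiplicities of ideals that depend tamely on $\ul{x}$, and then to invoke the uniform convergence of Hilbert--Kunz multiplicity from Corollary~\ref{ehk uniformly close}. Once the continuity estimate is in hand, choosing $\varepsilon < \s(\ov{R})$ forces $\s(R/(\ul{x}+\ul{\delta})) > 0$, and hence strong F-regularity of $R/(\ul{x}+\ul{\delta})$ by Aberbach--Leuschke.

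First I would set up the data. Since $\ov{R}$ is strongly F-regular it is in particular F-rational, so by Corollary~\ref{Corollary F-regular and F-rational nzd} the ring $R$ is Cohen--Macaulay and $\ul{x}$ is a regular sequence; moreover $\ov{R}$ is a normal domain. Using the $\mathbb{Q}$-Gorenstein assumption on the punctured spectrum of $R$ and the normality of $\ov{R}$, I would choose a canonical ideal $J \subseteq R$ with $J\ov{R}$ a canonical ideal of $\ov{R}$, and pick $y_1 \in J$ together with $y_2,\ldots,y_d \in R$ so that $\ul{x},y_1,\ldots,y_d$ is a full system of parameters for $R$ and the hypothesis $y_2 J\ov{R} \subseteq a_2 \ov{R}$ of Corollary~\ref{Cor PT} is satisfied; this is achievable because $J\ov{R}$ is locally principal on the punctured spectrum.

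Next, I would follow the Polstra--Tucker strategy to write
\[
\s(\ov{R}) = \lim_{e\to\infty} \frac{1}{p^{ed}} \Bigl(\ell\bigl(R/\mathfrak{a}_e\bigr) - \ell\bigl(R/\mathfrak{b}_e\bigr)\Bigr),
\]
where $\mathfrak{a}_e$ and $\mathfrak{b}_e$ are the ideals $(J^{[p^e]},\ul{x},y_2^{tp^e},\ldots,y_d^{tp^e})$ and $(J^{[p^e]},\ul{x},y_2^{2p^e},y_3^{tp^e},\ldots,y_d^{tp^e})$ (with $t$ fixed) produced by Corollary~\ref{Cor PT}. The crucial feature is that each such quantity is a Hilbert--Kunz-type length of a quotient of $R$ by an ideal containing $\ul{x}$ and a fixed Frobenius power $J^{[p^e]}$. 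Fixing $\varepsilon > 0$, I would first pick $e$ so that the truncation error in the displayed limit is below $\varepsilon/3$, and then apply Corollary~\ref{ehk uniformly close} to the fixed ideals $\mathfrak{a}_e, \mathfrak{b}_e$ (which contain $\m^{[p^{e_0}]}$ for some $e_0$ independent of $\delta$) to obtain $N$ such that for all $\ul{\delta} \in (\m^N)^{\oplus c}$ the lengths $\ell(R/\mathfrak{a}_e)$ and $\ell(R/\mathfrak{b}_e)$ are unchanged when $\ul{x}$ is replaced by $\ul{x}+\ul{\delta}$. Writing the analogous formula for $\s(R/(\ul{x}+\ul{\delta}))$ and using the triangle inequality then gives $|\s(R/(\ul{x})) - \s(R/(\ul{x}+\ul{\delta}))| < \varepsilon$.

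The main obstacle I expect is ensuring the Polstra--Tucker formula remains valid for the perturbed quotient $R/(\ul{x}+\ul{\delta})$. Concretely, one must check that $J\cdot R/(\ul{x}+\ul{\delta})$ is still a canonical ideal, that $y_2,\ldots,y_d$ is still a regular sequence modulo $\ul{x}+\ul{\delta}$, and that the condition $y_2 J \subseteq a_2$ persists modulo $\ul{x}+\ul{\delta}$. Each of these is an open condition on the punctured spectrum that should hold after shrinking $N$, using openness of the regular and $\mathbb{Q}$-Gorenstein loci, the flatness-like stability underlying Corollary~\ref{ehk uniformly close}, and the fact that the strongly F-regular Cohen--Macaulay locus in $\Spec(R)$ is open; Lemma~\ref{Lemma symbolic inclusion} would be used again in this perturbed setting to compare the symbolic and Frobenius powers of $J$ after modding out by $\ul{x}+\ul{\delta}$.
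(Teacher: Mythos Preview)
Your overall strategy matches the paper's: express the F-signature via the Polstra--Tucker machinery as a difference of Hilbert--Kunz multiplicities of fixed ideals, verify the same formula holds for the perturbed quotients, and then invoke the $\m$-adic continuity of $\ehk$ from Corollary~\ref{ehk uniformly close}. The ``main obstacle'' paragraph correctly identifies the issues (canonical ideal persists, $\ul{y}$ stays regular, etc.), and the paper handles these exactly as you suggest: Corollary~\ref{F-rational is ok} gives normality of $R/(\ul{x}+\ul{\delta})$, \cite{PolstraSmirnov} gives that $J$ stays a canonical ideal, and \cite{AKM} gives $\Q$-Gorensteinness on the punctured spectrum of the perturbation.

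However, your middle step has two real problems. First, the displayed limit is not a formula for the F-signature: the ideals $\mathfrak{a}_e = (J^{[p^e]},\ul{x},y_2^{tp^e},\ldots,y_d^{tp^e})$ and $\mathfrak{b}_e = (J^{[p^e]},\ul{x},y_2^{2p^e},y_3^{tp^e},\ldots,y_d^{tp^e})$ are just Frobenius powers of two parameter-type ideals, and $(\ell(R/\mathfrak{a}_e)-\ell(R/\mathfrak{b}_e))/p^{ed}$ converges to a difference of Hilbert--Kunz multiplicities that has no reason to equal $\s(\ov{R})$. Corollary~\ref{Cor PT} is an identity of \emph{colon} ideals, not a direct length formula. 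The paper instead uses the full chain of colon identities---Corollary~\ref{Cor PT} for $y_2$ together with \cite[Lemma~6.7(ii)]{PolstraTucker} for $y_3,\ldots,y_d$---to arrive at the Polstra--Tucker formula $\s(R/(\ul{x}+\ul{\delta})) = \ehk(I;R/(\ul{x}+\ul{\delta})) - \ehk((I,z);R/(\ul{x}+\ul{\delta}))$ for a \emph{single} pair $(I,z)$ independent of $e$ and $\ul{\delta}$ (\cite[Lemma~4.13]{PolstraTucker}). This is also why your construction of the $y_i$ is incomplete: you only arrange the condition $y_2 J\ov{R}\subseteq a_2\ov{R}$, but the paper needs $y_i^n J^{(n)}\subseteq a_i R$ for $i\geq 3$, chosen using the $\Q$-Gorenstein hypothesis on the punctured spectrum of $R$ itself.

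Second, your truncate-then-perturb argument has a gap. Fixing $e$ so that the truncation error for $\ov{R}$ is below $\varepsilon/3$ says nothing about the truncation error for $R/(\ul{x}+\ul{\delta})$; matching the finite-level lengths is not enough. The paper sidesteps this entirely: once $\s$ is written as $\ehk(I)-\ehk((I,z))$ with $I,z$ fixed, one applies the $\ehk$-continuity statement in Corollary~\ref{ehk uniformly close} (equivalently Theorem~\ref{very uniform convergence}) directly to each term, with no truncation needed.
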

\begin{proof}
Since $R$ is F-rational, it is a Cohen-Macaulay normal domain. Let $J$ be a canonical ideal of $R$. We can find $N \gg 0$ such that $\ul{x}+\ul{\delta}$ is a regular sequence for all $\delta \in (\m^N)^{\oplus c}$. After increasing $N$ if needed, by Corollary~\ref{F-rational is ok} we may assume that $R/(\ul{x}+\ul{\delta})$ is F-rational and, therefore, normal. Moreover, we can ensure that $\frac{J+(\ul{x}+\ul{\delta})}{(\ul{x}+\ul{\delta})}$ is a canonical ideal of $R/(\ul{x}+\ul{\delta})$ by \cite{PolstraSmirnov}. By applying the proof of \cite[Corollary~5]{AKM} in localizations, we see that the quotient $R/(\ul{x}+\ul{\delta})$ is $\Q$-Gorenstein on the punctured spectrum for any $\ul{\delta} \in (\m^N)^{\oplus c}$. In particular, $R/(\ul{x})$ is $\Q$-Gorenstein on the punctured spectrum.

We now mimic the proof of \cite[Corollary 6.8 (ii.)]{PolstraTucker}, using \cite[Lemma 6.7 (ii.)]{PolstraTucker} and Corollary~ \ref{Cor PT} in place of \cite[Lemma 6.7 (i.)]{PolstraTucker}. Let $0 \ne y_1 \in J$ be such that $\ul{x},y_1$ is a regular sequence. Let $U_2$ be the complement of the minimal primes of $(y_1)\ov R$, and observe that $U_2^{-1}\ov{J}$ is principal, since $\ov R$ is normal. We can then find $y_2 \in U_2$ such that $y_2J\ov R \subseteq a_2\ov R$ for some $a_2 \in J$. Now, let $U_3$ be the complement of the minimal primes of $\ul{x},y_1,y_2$. Since $R$ is $\Q$-Gorenstein on $\Spec(R) \smallsetminus \{\m\}$, there exists $n$ and $y_3 \in U_3$ such that $y_3^nJ^{(n)} \subseteq a_3$ for some $a_3 \in J^{(n)}$. Repeating this process, we obtain a system of parameters $\ul{x},\ul{y}$ such that $y_i^nJ^{(n)} \subseteq a_iR$ for all $i=3,\ldots,d$, with $a_i \in J^{(n)}$, and $y_2J \subseteq (a_2,\ul{x})$ for some $a_2 \in J$. After increasing $N$, we may assume that $(J,\ul{x},\ul{y}) = (J,\ul{x}+\ul{\delta},\ul{y})$ for all $\ul{\delta} \in (\m^N)^{\oplus c}$. In particular, $\ul{x}+\ul{\delta},\ul{y}$ is still a system of parameters in $R$. Let $u$ be an element of $(J,\ul{x},\ul{y}):\m$ which does not belong to $(J,\ul{x},\ul{y})$. Observe that, by our choice of $N$, we have that $u R/(J,\ul{x}+\ul{\delta},\ul{y})$ generates the socle of $R/(J,\ul{x}+\ul{\delta},\ul{y})$ for all $\ul{\delta} \in (\m^N)^{\oplus c}$. By a repeated application of \cite[Lemma 6.7 (ii.)]{PolstraTucker} and by Corollary~\ref{Cor PT}, for all $t\geq 2$ and $e \geq 0$, and for all $\ul{\delta} \in (\m^N)^{\oplus c}$ we have
\begin{align*}
& (J^{[p^e]},\ul{x}+ \ul{\delta},y_2^{tp^e},y_3^{tp^e},\ldots,y_d^{tp^e}): u^{p^e}y_2^{(t-1)p^e} \cdots y_d^{(t-1)p^e} \\
\subseteq & (J^{(p^e)},\ul{x}+ \ul{\delta},y_2^{tp^e},y_3^{tp^e},\ldots,y_d^{2p^e}): u^{p^e}y_2^{(t-1)p^e} y_3^{(t-1)p^e}\cdots y_d^{p^e}y_1^n \\
& \hspace{5cm}\vdots \\
\ldots  \subseteq  &  (J^{(p^e)},\ul{x}+ \ul{\delta},y_2^{tp^e},y_3^{2p^e},\ldots,y_d^{2p^e}): u^{p^e}y_2^{(t-1)p^e} y_3^{p^e}\cdots y_d^{p^e}y_1^{(d-1)n} \\
 = &  (J^{[p^e]},\ul{x}+ \ul{\delta},y_2^{2p^e},\ldots,y_d^{2p^e}):(u y_2 \cdots y_d)^{p^e} y_1^{(d-1)n}.
\end{align*}

Let $I = (J^{[p^e]},y_2^{2p^e},\ldots,y_d^{2p^e})$ and $u=\delta y_2\cdots y_d$.
By \cite[Lemma 4.13]{PolstraTucker}, for all $\ul{\delta} \in (\m^N)^{\oplus c}$ we may compute
\[
\s (R/(\ul{x}+\ul{\delta})) = \ehk(I;R/(\ul{x}+\ul{\delta})) - \ehk((I,z);R/(\ul{x}+\ul{\delta})).
\]
As observed in Theorem \ref{very uniform convergence}, there is $N \gg 0$ such that 
for all $\ul{\delta} \in (\m^N)^{\oplus c}$ we have
\[
\left|\ehk(I;R/(\ul{x}+\ul{\delta})) - \ehk(I;R/(\ul{x})) \right| < \frac{\varepsilon}{2}
\]
and 
\[
\left|\ehk((I,z);R/(\ul{x}+\ul{\delta})) - \ehk((I,z);R/(\ul{x})) \right| < \frac{\varepsilon}{2}.
\]
Therefore
$\left| \s(R/(\ul{x}+\ul{\delta})) - \s(R/(\ul{x})) \right| < \varepsilon$.

The second assertion follows by taking $\varepsilon = \s(R/(\ul{x}))/2$.
\end{proof}

\begin{Remark}
We cannot use Theorem~\ref{THM P->D} to deduce from Theorem~\ref{Theorem Q-Gorenstein} that 
F-regularity deforms for rings $\Q$-Gorenstein on the punctured spectrum. 
In fact, Example~\ref{Example Anurag} is $\Q$-Gorenstein on the punctured spectrum, since 
it is an $F$-rational three-dimensional ring (e.g., by \cite[Proposition 17.1]{Lipman} or \cite[7.3.2]{Ishii}). 
The same example also shows that it is not enough to require that $R/(\ul{x})$ is $\Q$-Gorenstein in Theorem~\ref{Theorem Q-Gorenstein},
since even $\Q$-Gorenstein strongly F-regular singularities do not deform. 
\end{Remark}

On the other hand, a positive result can be obtained if we assume that $R$ is Gorenstein in sufficiently high codimension.

\begin{Corollary}
Let $(R, \m)$ be an F-finite local ring of characteristic $p > 0$. 
Let $\ul{x}$ be a part of a system of parameters and suppose that $R/(\ul{x})$ is $\Q$-Gorenstein and strongly F-regular. 
Suppose that $R$ and $R/(\ul{x})$ are Gorenstein in codimension two. 
Then, there exists an integer $N > 0$
such that for all $\ul{\delta} \in (\m^N)^{\oplus c}$ the ring $R/(\ul{x}+\ul{\delta})$ is strongly F-regular.
\end{Corollary}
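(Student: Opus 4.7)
\emph{Plan.} The strategy is to reduce to Theorem~\ref{Theorem Q-Gorenstein} by showing that $R$ is $\Q$-Gorenstein on its punctured spectrum; in fact, I would show that $R$ itself is $\Q$-Gorenstein. Granting this, Theorem~\ref{Theorem Q-Gorenstein} applied with $\varepsilon = \s(R/(\ul x))/2 > 0$ yields an integer $N > 0$ such that $\s(R/(\ul x + \ul \delta)) > \s(R/(\ul x))/2 > 0$ for every $\ul \delta \in (\m^N)^{\oplus c}$, from which the desired strong F-regularity of $R/(\ul x + \ul \delta)$ follows.

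First I would observe that since $R/(\ul x)$ is strongly F-regular and, in particular, F-rational, Corollary~\ref{Corollary F-regular and F-rational nzd} implies that $R$ is a Cohen-Macaulay normal domain and that $\ul x$ is a regular sequence on $R$. Under these assumptions we have $\omega_{R/(\ul x)} \cong \omega_R/(\ul x)\omega_R$, so the natural restriction of divisor class groups $\mathrm{Cl}(R) \to \mathrm{Cl}(R/(\ul x))$ sends $[\omega_R]$ to $[\omega_{R/(\ul x)}]$. Since $R/(\ul x)$ is $\Q$-Gorenstein, the image $[\omega_{R/(\ul x)}]$ is torsion; to conclude that $[\omega_R]$ itself is torsion it is enough to establish that this restriction map is injective.

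The main obstacle is precisely this last injectivity assertion. It is a local Lefschetz-type statement for divisor class groups, available under the hypothesis that both $R$ and $R/(\ul x)$ have singular loci of codimension at least three -- exactly the content of the Gorenstein-in-codimension-two assumption on both rings. Such results are classical and one can invoke Grothendieck's local Lefschetz theorems in the style of SGA 2, or Fossum's and Flenner's treatments of divisor class groups under deformation by a regular sequence. This is the step where the codimension-two Gorenstein hypotheses on both $R$ and $R/(\ul x)$ are essential. One could equivalently argue prime by prime: for $\p$ of $\height(\p) \leq 2$, the ring $R_\p$ is Gorenstein by assumption; for $\height(\p) \geq 3$, localization at $\p$ preserves the Gorenstein-in-codimension-two property of both rings, and the same Lefschetz input applied to $R_\p$ and $R_\p/(\ul x)R_\p$ gives the $\Q$-Gorensteinness of $R_\p$.
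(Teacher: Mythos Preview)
Your strategy is exactly the paper's: show that $R$ itself is $\Q$-Gorenstein and then invoke Theorem~\ref{Theorem Q-Gorenstein}. The paper does this in one line by citing \cite[Proposition~9.1.9]{Ishii}, which is precisely the deformation statement you are sketching (a normal Cartier divisor that is $\Q$-Gorenstein forces the ambient to be $\Q$-Gorenstein in a neighborhood, under Gorenstein-in-codimension-two hypotheses on both).

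One small caution on your justification: the classical Grothendieck--Lefschetz/SGA~2 statements you invoke typically give injectivity of $\mathrm{Cl}(R)\to\mathrm{Cl}(R/(\ul x))$ under stronger hypotheses (e.g.\ $(R_2)$ or suitable depth conditions), not merely ``Gorenstein in codimension two''. The result in \cite{Ishii} that the paper cites is tailored to the canonical class and does not proceed via full injectivity of the class group map; so while your conclusion is correct and your route is the intended one, the precise reference you want is \cite[Proposition~9.1.9]{Ishii} rather than general Lefschetz.
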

\begin{proof}
We use \cite[Proposition~9.1.9]{Ishii} to show that $R$ is $\Q$-Gorenstein, and then invoke Theorem \ref{Theorem Q-Gorenstein}.
\end{proof}

\subsection{Positive results on stability of F-purity} \label{Section perturbation F-pure}

In light of Section \ref{Section Counterexamples F-reg}, there is no hope to prove that $\m$-adic stability of F-purity holds, even under strong assumptions such as the ring being Cohen-Macaulay. In fact, by Theorem~\ref{THM P->D}, one needs to ensure that deformation of F-purity holds, at the very least. To this end, we make the following definition.

\begin{Definition} Let $(R,\m)$ be an F-finite local ring, and $\a \subseteq \m$ be an ideal. We say that $R$ is compatibly F-pure along $\a$ if there exists a Cartier map $\varphi \colon R \to R$ such that $\varphi(1)=1$ and $\varphi(\a) \subseteq \a$.
\end{Definition}

Equivalently, $R$ is compatibly F-pure along $\a$ if $R$ is F-pure, and there is a splitting of Frobenius that descends to $R/\a$. In this sense, when $\a=(x)$ is generated by a regular element, we can view compatible F-purity along $\a$ as a strong version of deformation of F-purity. The goal of this section is to show that stability of F-purity holds if $R$ is compatibly F-pure along an ideal generated by a regular element. 

We first need some general results on Cartier maps. When $R$ is F-finite, we can write it as a quotient $R=S/I$ of an F-finite regular local ring $(S,\n)$ by \cite{Gabber}. Fedder's criterion establishes that $R$ is F-pure if and only if $(I^{[p]}:_S I) \not\subseteq \n^{[p]}$. Moreover, it is well-known that $p^{-e}$-linear maps on $R$ correspond to elements of the ideal $(I^{[p^e]}:_S I)$ in the following way: if $\Tr\colon S \to S$ denotes the trace map on $S$, then every $p^{-e}$-linear map on $R$ is of the form $\Tr^e(s \cdot -)$ for some $s \in I^{[p^e]}:_S I$, and viceversa. The correspondence is one-to-one modulo $I^{[p]}$. A surjective $p^{-e}$-linear map corresponds to an element $\alpha \in (I^{[p^e]}:_S I)$ that does not belong to $\n^{[p^e]}$.

From this point of view, the ring $R=S/I$ is compatibly F-pure along $R/\a = S/(I,\a)$ if and only if there exists $\alpha \in (I^{[p]}:_S I) \cap ((I,\a)^{[p]}:_S (I,\a))$, with $\alpha \notin \n^{[p]}$.

\begin{Theorem} \label{Perturbation FPurity Compatible} Let $(R,\m)$ be an F-finite local ring and $x \in \m$ be a regular element on $R$. Assume that $R$ is compatibly F-pure along $(x)$. There exists an integer $N \gg 0$ such that $R$ is compatibly F-pure along $(x+\delta)$ for all $\delta \in \m^N$. In particular, $R/(x + \delta)$ is F-pure for each $\delta \in \m^N$.
\end{Theorem}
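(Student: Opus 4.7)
The plan is to apply Fedder's criterion. By Gabber's theorem I would write $R = S/I$ with $(S,\n)$ an F-finite regular local ring, and lift $x$ to $\tilde x \in S$. Under Fedder's correspondence (summarized just before the statement), the hypothesis yields an element $\alpha \in (I^{[p]}:_S I) \cap ((I,\tilde x)^{[p]}:_S (I,\tilde x))$ with $\alpha \notin \n^{[p]}$. The goal becomes to produce, for each $\tilde\delta \in \n^N$ lifting $\delta \in \m^N$ (with $N$ to be chosen), an analogous element $\beta \in (I^{[p]}:_S I) \cap ((I,\tilde x+\tilde\delta)^{[p]}:_S (I,\tilde x+\tilde\delta))$ with $\beta \notin \n^{[p]}$.

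The key preliminary observation I would establish is that $\tilde x$ is a non-zero-divisor on $S/I^{[p]}$. Indeed, since $S$ is regular, Kunz's theorem implies that the Frobenius endomorphism on $S$ is flat, and flat base change gives $\Ass_S(S/I^{[p]}) = \Ass_S(S/I)$; thus $\tilde x$ avoids every associated prime of $S/I^{[p]}$, as it avoids every associated prime of $R = S/I$.

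Writing $\alpha\tilde x = j + s\tilde x^p$ with $j \in I^{[p]}$ and $s \in S$, the identity $(\alpha - s\tilde x^{p-1})\tilde x = j \in I^{[p]}$ combined with $\tilde x$ being a non-zero-divisor on $S/I^{[p]}$ forces $\alpha - s\tilde x^{p-1} \in I^{[p]}$. Consequently $\alpha I \subseteq I^{[p]}$ yields $s\tilde x^{p-1} I \subseteq I^{[p]}$, and since $\tilde x^{p-1}$ is also a non-zero-divisor on $S/I^{[p]}$, we deduce $s \in (I^{[p]}:_S I)$. With these in hand I would propose the candidate
\[
\beta = \alpha + s\bigl((\tilde x + \tilde\delta)^{p-1} - \tilde x^{p-1}\bigr),
\]
and verify: $\beta I \subseteq I^{[p]}$ holds because $\alpha I \subseteq I^{[p]}$ and $s \in (I^{[p]}:_S I)$; and a direct expansion gives $\beta(\tilde x+\tilde\delta) = j + (\alpha - s\tilde x^{p-1})\tilde\delta + s(\tilde x+\tilde\delta)^p$, where every summand already lies in $(I^{[p]}, (\tilde x+\tilde\delta)^p)$ because $\alpha - s\tilde x^{p-1} \in I^{[p]}$.

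The final step is to secure $\beta \notin \n^{[p]}$ by choosing $N$ large. Taking $N \geq n(p-1) + 1$, where $n = \dim S$, one has $\n^N \subseteq \n^{[p]}$. Since $(\tilde x+\tilde\delta)^{p-1} - \tilde x^{p-1} = \sum_{k=1}^{p-1}\binom{p-1}{k}\tilde x^{p-1-k}\tilde\delta^k$ lies in $(\tilde\delta) \subseteq \n^N \subseteq \n^{[p]}$, the correction is in $\n^{[p]}$ and $\beta \equiv \alpha \pmod{\n^{[p]}}$, so $\beta \notin \n^{[p]}$. Thus $\beta$ witnesses compatible F-purity of $R$ along $(x+\delta)$, and the F-purity of $R/(x+\delta)$ follows by descent. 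The main subtlety, dissolved by Kunz's flatness, is the identification $(I^{[p]}:_S \tilde x) = I^{[p]}$ that underlies both the form of $s$ and the vanishing modulo $I^{[p]}$ of the middle term in the expansion of $\beta(\tilde x+\tilde\delta)$.
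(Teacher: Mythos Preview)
Your argument is correct and follows essentially the same route as the paper: both extract from $\alpha$ an element $s \in (I^{[p]}:_S I)$ with $\alpha \equiv s\tilde x^{\,p-1} \pmod{I^{[p]}}$ via the fact that $\tilde x$ is a non-zero-divisor on $S/I^{[p]}$, and then take a witness congruent to $s(\tilde x+\tilde\delta)^{p-1}$ modulo $I^{[p]}$ (the paper uses $s(\tilde x+\tilde\delta)^{p-1}$ directly, while your $\beta$ differs from this by the element $\alpha - s\tilde x^{\,p-1} \in I^{[p]}$, so the two represent the same Cartier map). Your explicit invocation of Kunz's flatness to justify $\Ass_S(S/I^{[p]}) = \Ass_S(S/I)$ is a nice elaboration of what the paper leaves implicit.
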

\begin{proof}

Write $R=S/I$, where $(S,\n)$ is a regular local ring. Since $R$ is compatibly F-pure along $(x)$, there exists $\alpha \in (I^{[p]}:_S I) \cap ((I,x)^{[p]}:_S (I,x))$ with $\alpha \notin \n^{[p]}$. Since $x\alpha  \in (I,x)^{[p]}$, there exists $\beta \in R$ such that $x(\alpha - x^{p-1}\beta) \in I^{[p]}$. Since $x$ is a non-zero divisor on $R=S/I$, and $S$ is regular, $x$ is a non-zero divisor also on $S/I^{[p]}$. In particular, $\alpha - x^{p-1}\beta \in I^{[p]}$. We now claim that $\beta \in I^{[p]}:_S I$. In fact, since $\alpha \in I^{[p]}:_S I$, and because $\alpha -x^{p-1}\beta  \in I^{[p]}$, we conclude that $x^{p-1}\beta I \subseteq I^{[p]}$. As above, $x^{p-1}$ is a non-zero divisor on $S/I^{[p]}$, and thus $\beta I \subseteq I^{[p]}$, as claimed. Finally, observe that $x^{p-1}\beta$ is congruent to $\alpha$ modulo $\n^{[p]}$, and thus $x^{p-1}\beta \notin \n^{[p]}$. In particular, we have that $(x+\delta)^{p-1} \beta \notin \n^{[p]}$ for all $\delta \in \n^{[p]}$. Choose $N \gg 0$ such that $\n^N \subseteq \n^{[p]}$, and such that $x+\varepsilon$ is regular on $R$ for all $\varepsilon \in \n^N$. For any given $\delta \in \m^N$, pick a representative modulo $I$ that belongs to $\n^N$. Then $(x+\delta)^{p-1}\beta \in (I^{[p]}:_SI) \cap ((I,x+\delta)^{[p]}:_S (I,x+\delta))$, and $(x+\delta)^{p-1}\beta \notin \n^{[p]}$. It follows that $R$ is compatibly F-pure along $(x+\delta)$.
\end{proof}

The proof of Theorem \ref{Perturbation FPurity Compatible} actually shows that if $R$ is compatibly F-pure along $(x)$, then the pair $(R,x)$ is sharply F-pure.

\begin{Definition} \cite[Definition 3.1]{SchwedeSharpFPurity} Let $(R,\m)$ be an F-finite local ring, $x \in \m$ a regular element and $t \in \R_{\geq 0}$. The pair $(R,x^t)$ is {\it sharply F-pure} if there exist Cartier maps $\varphi_e$ such that $\varphi_e \circ (x^{\lceil t(p^e-1) \rceil}F^e )= \id_R$ for infinitely many $e \geq 0$.
\end{Definition}

The notion of sharp F-purity introduced by Schwede in \cite{SchwedeSharpFPurity} is much more general than the one above. Since we are only interested in pairs of this form, we will not recall the more general version here.

One can show that being compatibly F-pure along $(x)$ is equivalent to the pair $(R,x)$ being sharply F-pure. Following the proof of Theorem \ref{Perturbation FPurity Compatible}, we obtain the following stability result for sharp F-purity of pairs. 

\begin{Proposition} \label{PropositionSharpFPurity} Let $(R,\m)$ be an F-finite local ring, $x \in \m$ a regular element and $t \in \R_{\geq 0}$. Assume that $(R,x^t)$ is sharply F-pure. There exists $N \gg 0$ such that $(R,(x+\delta)^t)$ is sharply F-pure for all $\delta \in \m^N$.
\end{Proposition}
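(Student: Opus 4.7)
The plan is to adapt the Fedder-criterion strategy used in Theorem~\ref{Perturbation FPurity Compatible}. Writing $R = S/I$ with $(S,\n)$ an F-finite regular local ring, and lifting $x$ to $\n\subset S$ (still calling it $x$), the standard correspondence between $p^{-e}$-linear maps on $R$ and elements of $I^{[p^e]}:_S I$ modulo $I^{[p^e]}$ translates the splitting condition $\varphi_e(x^{m_e}) = 1$, where $m_e := \lceil t(p^e-1)\rceil$, into the purely algebraic statement that there exists $\alpha_e \in I^{[p^e]}:_S I$ with $\alpha_e\,x^{m_e} \notin \n^{[p^e]}$. Since $(R,x^t)$ is sharply F-pure, I would fix one $e$ for which such an $\alpha_e$ exists.

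Next, I would choose $N$ large enough that $\n^N \subseteq \n^{[p^e]}$, enlarging if needed so that $x+\delta$ remains a non-zero divisor on $R$ for every $\delta \in \m^N$. For such a $\delta$, lift it to $\delta' \in \n^N \subseteq \n^{[p^e]}$. The binomial expansion gives
\[
(x+\delta')^{m_e} - x^{m_e} = \sum_{k=1}^{m_e}\binom{m_e}{k}x^{m_e-k}(\delta')^k \in \n^{[p^e]},
\]
since every term carries a factor of $\delta' \in \n^{[p^e]}$. Multiplying by $\alpha_e$ yields $\alpha_e(x+\delta')^{m_e} \equiv \alpha_e\,x^{m_e} \pmod{\n^{[p^e]}}$, and the right-hand side lies outside $\n^{[p^e]}$ by the choice of $\alpha_e$. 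Because the ideal $I^{[p^e]}:_S I$ does not depend on $x$, the same element $\alpha_e$ now witnesses a Cartier splitting of level $e$ for the pair $(R,(x+\delta)^t)$.

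It remains to upgrade from a splitting at one level $e$ to splittings at infinitely many levels, as required by the definition. For this I would use the standard composition trick: given a $p^{-e}$-linear $\varphi_e\colon R\to R$ with $\varphi_e((x+\delta)^{m_e}) = 1$, the iteration $\Phi_n := \varphi_e \circ F^e_*\varphi_e \circ \cdots \circ F^{(n-1)e}_*\varphi_e$ is a $p^{-ne}$-linear map satisfying $\Phi_n((x+\delta)^{m_e(1+p^e+\cdots+p^{(n-1)e})}) = 1$. Because
\[
m_{ne} = \lceil t(p^{ne}-1)\rceil \leq m_e \cdot \frac{p^{ne}-1}{p^e-1} = m_e(1+p^e+\cdots+p^{(n-1)e}),
\]
premultiplying $\Phi_n$ by an appropriate nonnegative power of $(x+\delta)$ produces a $p^{-ne}$-linear splitting sending $(x+\delta)^{m_{ne}}$ to $1$. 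This witnesses sharp F-purity of $(R,(x+\delta)^t)$ at level $ne$ for every $n \geq 1$.

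The main technical obstacle I anticipate is the bookkeeping in this final iteration step, in particular verifying the numerical inequality above that permits the premultiplication by a nonnegative power of $(x+\delta)$. Beyond that, the argument is parallel to Theorem~\ref{Perturbation FPurity Compatible}, with the elementary binomial estimate replacing the explicit manipulation $\alpha = x^{p-1}\beta + (\text{element of } I^{[p]})$ used there. The key conceptual point is that once $\delta$ is taken deeply enough into $\n$ so that $\delta' \in \n^{[p^e]}$, the algebraic Fedder-type witness of sharp F-purity is unaffected by the perturbation.
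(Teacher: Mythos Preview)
Your proof is correct and follows essentially the same approach as the paper: translate sharp F-purity into the Fedder-type condition $\alpha_e\,x^{\lceil t(p^e-1)\rceil}\notin\n^{[p^e]}$, fix a single witnessing $e$, and take $N$ large enough that $\n^N\subseteq\n^{[p^e]}$ so the binomial perturbation is absorbed. The only difference is that the paper cites \cite[Proposition~3.3]{SchwedeSharpFPurity} for the fact that a witness at a single $e$ propagates to infinitely many, whereas you reprove this by the standard iteration argument---your numerical inequality is indeed valid since the right-hand side is an integer bounded below by $t(p^{ne}-1)$.
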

\begin{proof}
By the correspondence noted above, if we write $R=S/I$ for some regular local ring $(S,\n)$, then $(R,x^t)$ is sharply F-pure if and only if there exist elements $\alpha_e \in (I^{[p^e]}:I)$ such that $x^{\lceil t(p^e-1) \rceil}\alpha_e \notin \n^{[p^e]}$ for infinitely many values of $e \geq 0$. However,  \cite[Proposition 3.3]{SchwedeSharpFPurity}  shows that it is 
enough to find a single $e_0$ for which this is the case.

Let $e_0 >0$ and an element $\alpha_0 \in (I^{[p^{e_0}]}:I)$ be such that $x^{\lceil t(p^{e_0}-1)\rceil}\alpha_{e_0} \notin \n^{[p^{e_0}]}$. Choose $N$ such that $x+\delta$ is regular for all $\delta \in \m^N$, and such that $\m^N \subseteq \m^{[p^{e_0}]}$. We have $(x+\delta)^{\lceil t(p^{e_0}-1)\rceil}\alpha_{e_0} = x^{\lceil t(p^{e_0}-1)\rceil}\alpha_{e_0} + \delta \beta$ for some $\beta \in R$. As $\delta \in \m^{[p^{e_0}]}$, it follows that $(x+\delta)^{\lceil t(p^{e_0}-1)\rceil}\alpha_{e_0} \notin \n^{[p^{e_0}]}$, and then $(R,(x+\delta)^t)$ is sharply F-pure by \cite[Proposition 3.3]{SchwedeSharpFPurity}.
\end{proof}

It is also natural to ask whether the strategy used in the proof of Theorem \ref{Perturbation FPurity Compatible} extends to regular sequences in place of just regular elements. The next example shows that this is not the case.

\begin{Example} Let $S=\F_3[x,y,z,u,v,w,t]_\n$, where $\n$ is the maximal ideal generated by the variables of $S$. Let $I$ be the ideal generated by the $2 \times 2$ minors of
\[
\begin{bmatrix}
x^2+v^5 & y & u \\
z & x^2 & t-u
\end{bmatrix},
\]
and set $T=S/I$. Let $x_1=t-u^2$ and $x_2=v$, and let $R$ be the ring constructed in Theorem \ref{Counterexample FReg}. It can be checked using Macaulay2 (\cite{M2}) that:
\begin{itemize}
\item $x_1,x_2$ forms a regular sequence on $T$.
\item $T$ is F-pure and $T/(x_1,x_2) \cong R/(v)$ is F-pure. In fact, a stronger statement is true: $T$ is compatibly F-pure along $(x_1,x_2)$.
\item For all integers $N \geq 1$, $T$ is not compatibly F-pure along $(x_1,x_2-w^N)$. In fact, $T/(x_1,x_2-w^N)\cong R/(v-w^N)$ is not F-pure, as shown in Theorem \ref{Counterexample FReg}.
\end{itemize}
\end{Example}
In order to obtain a stability result for regular sequences $x_1,\ldots,x_c$ with the strategy of the proof of Theorem \ref{Perturbation FPurity Compatible}, one would then have to assume that $R$ is compatibly F-pure along $(x_{i_1},\ldots,x_{i_t})$ for all subsets $\{x_{i_1},\ldots,x_{i_t}\}$ of $\{x_1,\ldots,x_c\}$.

\section{Comments and further questions}

The results contained in this paper hint to a strong connection between $\m$-adic stability and deformation of some F-singularities. Theorem \ref{THM P->D} shows that, in great generality, stability is a stronger property than deformation. We were not able to prove that deformation implies stability in reasonably general assumptions. However, as we are not aware of any counterexamples, it is natural to ask the following:
\begin{Question} Let $\cP$ be a property of local rings that satisfies the assumptions of Theorem \ref{THM P->D}. Are deformation and $\m$-adic stability of $\cP$ equivalent?
\end{Question}

This article deals with four important types of F-singularities. One can ask the same questions for other F-singularities, for instance:
\begin{Question} Let $(R,\m)$ be a local ring of prime characteristic $p>0$. Are F-full singularities stable? Are F-antinilpotent singularities stable?
\end{Question}
We observe that both F-fullness and F-antinipotency are known to deform \cite[Theorem 1.1]{MaQuy}. Another important F-singularity is F-nilpotency (see for instance \cite{PhamPolstra} for the defintion). We thank Pham Hung Quy for pointing out to us that F-nilpotency satisfies the conditions of Theorem \ref{THM P->D} by \cite[Theorem B]{KenkelMaddoxPolstraSimpson}, and does not deform \cite[Example 2.7]{SrinivasTakagi}. Hence it cannot be stable.

In Theorem \ref{Theorem Q-Gorenstein} we prove that if $(R,\m)$ is $\Q$-Gorenstein, then strong F-regular singularities are stable. It is therefore natural to ask the following:
\begin{Question}
If $(R,\m)$ is $\Q$-Gorenstein, is F-purity stable?
\end{Question}
Note that, with the $\Q$-Gorenstein assumption, F-purity is known to deform if the index of the canonical ideal is not divisible by the characteristic of $R$.

Another direction of investigation is whether similar stability results hold for the characteristic zero counterparts to F-singularities:
\begin{Question} Does stability of Du Bois, rational, log-canonical and log-terminal singularities hold?
\end{Question}

We conclude by mentioning that the bounds on $N$ in our theorems are constructive. Namely, the constant $N$ that we need is affected by the following considerations.
First, if $\ul{x}$ is a part of a system of parameters, then we may preserve this condition by taking $N$ to be at least the Hilbert-Samuel multiplicity of $R/(\ul{x})$ (or extendend degree of $R/(\ul{x})$, in the non-Cohen-Macaulay case). Second, controlling Hilbert--Kunz multiplicities requires to bound the $\m$-adic order of 
a discriminant. Last, for Theorem \ref{Theorem F-inj strictly filter}, we need to determine what power of $\m$ annihilates the $\m$-secondary component of $H^i_\m(R)$, for all $i \in \mathbb Z$. If $R$ is the homomorphic image of an $n$-dimensional regular local ring $S$ (e.g., in the complete case), this amounts to find what power of $\m$ annihilates the $\m$-primary component of $\Ext^{n-i}_S(R,S)$.
  
It would be good to find better explicit bounds.

\bibliographystyle{alpha}
\bibliography{References}

\newcommand{\etalchar}[1]{$^{#1}$}
\begin{thebibliography}{MST{\etalchar{+}}19}

\bibitem[Abe02]{AberbachWeakAndStrong}
Ian~M. Aberbach.
\newblock Some conditions for the equivalence of weak and strong
  {$F$}-regularity.
\newblock {\em Comm. Algebra}, 30(4):1635--1651, 2002.

\bibitem[AKM98]{AKM}
Ian Aberbach, Mordechai Katzman, and Brian MacCrimmon.
\newblock Weak {F}-regularity deforms in {${\bf Q}$}-{G}orenstein rings.
\newblock {\em J. Algebra}, 204(1):281--285, 1998.

\bibitem[BS98]{BrodmannSharp}
M.~P. Brodmann and R.~Y. Sharp.
\newblock {\em Local cohomology: an algebraic introduction with geometric
  applications}, volume~60 of {\em Cambridge Studies in Advanced Mathematics}.
\newblock Cambridge University Press, Cambridge, 1998.

\bibitem[DM19]{DattaMurayama}
Rankeya Datta and Takumi Murayama.
\newblock Permanence properties of {$F$}-injectivity.
\newblock {\em arXiv: 1906.11399}, 2019.

\bibitem[Elk78]{Elkik}
Ren\'{e}e Elkik.
\newblock Singularit\'{e}s rationnelles et d\'{e}formations.
\newblock {\em Invent. Math.}, 47(2):139--147, 1978.

\bibitem[Fed87]{Fedder}
Richard Fedder.
\newblock {$F$}-purity and rational singularity in graded complete intersection
  rings.
\newblock {\em Trans. Amer. Math. Soc.}, 301(1):47--62, 1987.

\bibitem[Gab04]{Gabber}
Ofer Gabber.
\newblock Notes on some {$t$}-structures.
\newblock In {\em Geometric aspects of {D}work theory. {V}ol. {I}, {II}}, pages
  711--734. Walter de Gruyter, Berlin, 2004.

\bibitem[GP19]{GreuelPham}
Gert-Martin Greuel and Thuy~Huong Pham.
\newblock Finite determinacy of matrices and ideals.
\newblock {\em J. Algebra}, 530:195--214, 2019.

\bibitem[GS]{M2}
Daniel~R. Grayson and Michael~E. Stillman.
\newblock Macaulay2, a software system for research in algebraic geometry.
\newblock Available at \url{http://www.math.uiuc.edu/Macaulay2/}.

\bibitem[Has01]{Hashimoto01}
Mitsuyasu Hashimoto.
\newblock Cohen-{M}acaulay {F}-injective homomorphisms.
\newblock In {\em Geometric and combinatorial aspects of commutative algebra
  ({M}essina, 1999)}, volume 217 of {\em Lecture Notes in Pure and Appl.
  Math.}, pages 231--244. Dekker, New York, 2001.

\bibitem[Has10]{Hashimoto}
Mitsuyasu Hashimoto.
\newblock {$F$}-pure homomorphisms, strong {$F$}-regularity, and
  {$F$}-injectivity.
\newblock {\em Comm. Algebra}, 38(12):4569--4596, 2010.

\bibitem[HH90]{HochsterHuneke}
Melvin Hochster and Craig Huneke.
\newblock Tight closure, invariant theory, and the {B}rian\c con-{S}koda
  theorem.
\newblock {\em J. Amer. Math. Soc.}, 3(1):31--116, 1990.

\bibitem[HH94]{HochsterHunekeSmoothBaseChange}
Melvin Hochster and Craig Huneke.
\newblock {$F$}-regularity, test elements, and smooth base change.
\newblock {\em Trans. Amer. Math. Soc.}, 346(1):1--62, 1994.

\bibitem[HMS14]{HMS}
Jun Horiuchi, Lance~Edward Miller, and Kazuma Shimomoto.
\newblock Deformation of {$F$}-injectivity and local cohomology.
\newblock {\em Indiana Univ. Math. J.}, 63(4):1139--1157, 2014.
\newblock With an appendix by Karl Schwede and Anurag K. Singh.

\bibitem[HR76]{HochsterRoberts}
Melvin Hochster and Joel~L. Roberts.
\newblock The purity of the {F}robenius and local cohomology.
\newblock {\em Advances in Math.}, 21(2):117--172, 1976.

\bibitem[HT97]{HunekeTrivedi}
Craig Huneke and Vijaylaxmi Trivedi.
\newblock The height of ideals and regular sequences.
\newblock {\em Manuscripta Math.}, 93(2):137--142, 1997.

\bibitem[HY]{HochsterYao}
Melvin Hochster and Yongwei Yao.
\newblock The {F}-rational signature and drops in the {H}ilbert-{K}unz
  multiplicity.
\newblock {\em Preprint}.

\bibitem[Ish14]{Ishii}
Shihoko Ishii.
\newblock {\em Introduction to singularities}.
\newblock Springer, Tokyo, 2014.

\bibitem[KMPS19]{KenkelMaddoxPolstraSimpson}
Jennifer Kenkel, Kyle Maddox, Thomas Polstra, and Austyn Simpson.
\newblock {$F$}-nilpotent rings and permanence properties.
\newblock {\em arXiv: 1912.01150}, 2019.

\bibitem[Lip69]{Lipman}
Joseph Lipman.
\newblock Rational singularities, with applications to algebraic surfaces and
  unique factorization.
\newblock {\em Inst. Hautes \'{E}tudes Sci. Publ. Math.}, (36):195--279, 1969.

\bibitem[{Mac}96]{MacCrimmonThesis}
Brian~Cameron {MacCrimmon}.
\newblock {\em Strong {F}-regularity and boundedness questions in tight
  closure}.
\newblock ProQuest LLC, Ann Arbor, MI, 1996.
\newblock Thesis (Ph.D.)--University of Michigan.

\bibitem[MQ18]{MaQuy}
Linquan Ma and Pham~Hung Quy.
\newblock Frobenius actions on local cohomology modules and deformation.
\newblock {\em Nagoya Math. J.}, 232:55--75, 2018.

\bibitem[MQS]{MaPhamSmirnov}
Linquan Ma, Pham Quy, and Ilya Smirnov.
\newblock Filter regular sequence under small perturbations.
\newblock {\em Math. Ann}.
\newblock Accepted.

\bibitem[MS18]{MaSchwede}
Linquan Ma and Karl Schwede.
\newblock Singularities in mixed characteristic via perfectoid big
  {Cohen}-{Macaulay} algebras.
\newblock {\em arXiv: 1806.09567}, 2018.

\bibitem[MSS17]{MaSchwedeShimomoto}
Linquan Ma, Karl Schwede, and Kazuma Shimomoto.
\newblock Local cohomology of {D}u {B}ois singularities and applications to
  families.
\newblock {\em Compos. Math.}, 153(10):2147--2170, 2017.

\bibitem[MST{\etalchar{+}}19]{MSTWW}
Linquan Ma, Karl Schwede, Kevin Tucker, Joe Waldron, and Jakub Witaszek.
\newblock An analog of adjoint ideals and plt singularities in mixed
  characteristic.
\newblock {\em arXiv: 1910.14665}, 2019.

\bibitem[PQ19]{PhamPolstra}
Thomas Polstra and Pham~Hung Quy.
\newblock Nilpotence of {F}robenius actions on local cohomology and {F}robenius
  closure of ideals.
\newblock {\em J. Algebra}, 529:196--225, 2019.

\bibitem[PS20]{PolstraSmirnov}
Thomas Polstra and Ilya Smirnov.
\newblock Continuity of {Hilbert}--{Kunz} multiplicity and {F}-signature.
\newblock {\em Nagoya Math. J.}, 239:322--345, 2020.

\bibitem[PSZ18]{PSZ}
Zsolt Patakfalvi, Karl Schwede, and Wenliang Zhang.
\newblock {$F$}-singularities in families.
\newblock {\em Algebr. Geom.}, 5(3):264--327, 2018.

\bibitem[PT18]{PolstraTucker}
Thomas Polstra and Kevin Tucker.
\newblock {$F$}-signature and {H}ilbert-{K}unz multiplicity: a combined
  approach and comparison.
\newblock {\em Algebra Number Theory}, 12(1):61--97, 2018.

\bibitem[Sam56]{Samuel}
Pierre Samuel.
\newblock Alg\'ebricit\'e de certains points singuliers alg\'ebro\"\i des.
\newblock {\em J. Math. Pures Appl. (9)}, 35:1--6, 1956.

\bibitem[Sch08]{SchwedeSharpFPurity}
Karl Schwede.
\newblock Generalized test ideals, sharp {$F$}-purity, and sharp test elements.
\newblock {\em Math. Res. Lett.}, 15(6):1251--1261, 2008.

\bibitem[Sin99a]{SinghFPURFREG}
Anurag~K. Singh.
\newblock Deformation of {$F$}-purity and {$F$}-regularity.
\newblock {\em J. Pure Appl. Algebra}, 140(2):137--148, 1999.

\bibitem[Sin99b]{SinghNODeformationFREG}
Anurag~K. Singh.
\newblock {$F$}-regularity does not deform.
\newblock {\em Amer. J. Math.}, 121(4):919--929, 1999.

\bibitem[Smi94]{Smith}
K.~E. Smith.
\newblock Tight closure of parameter ideals.
\newblock {\em Invent. Math.}, 115(1):41--60, 1994.

\bibitem[Smi20]{SmirnovFamily}
Ilya Smirnov.
\newblock On semicontinuity of multiplicities in families.
\newblock {\em Doc. Math.}, 25:381--399, 2020.

\bibitem[ST17]{SrinivasTakagi}
Vasudevan Srinivas and Shunsuke Takagi.
\newblock Nilpotence of {F}robenius action and the {H}odge filtration on local
  cohomology.
\newblock {\em Adv. Math.}, 305:456--478, 2017.

\bibitem[ST19]{SmirnovTucker}
Ilya Smirnov and Kevin Tucker.
\newblock Towards the theory of {F}-rational signature.
\newblock {\em arXiv: 1911.02642}, 2019.

\bibitem[Tuc12]{Tucker}
Kevin Tucker.
\newblock {$F$}-signature exists.
\newblock {\em Invent. Math.}, 190(3):743--765, 2012.

\bibitem[V{\'e}l95]{Velez}
Juan~D. V{\'e}lez.
\newblock Openness of the {F}-rational locus and smooth base change.
\newblock {\em J. Algebra}, 172(2):425--453, 1995.

\end{thebibliography}
\end{document}